\definecolor{webgreen}{rgb}{0,.5,0}
\definecolor{webbrown}{rgb}{.6,0,0}
\begin{document}

\theoremstyle{plain}
\newtheorem{theorem}{Theorem}
\newtheorem{lemma}{Lemma}
\newtheorem{corollary}{Corollary}

\newtheorem{proposition}{Proposition}
\newtheorem{example}{Example}
\newtheorem{definition}{Definition}

\theoremstyle{definition}
\newtheorem{remark}{Remark}

\renewcommand{\thefootnote}{\fnsymbol{footnote}}
\setlength{\skip\footins}{1cm}
\setlength{\footnotesep}{0.3cm}

\begin{center}
\vskip 1cm{\Large\bf Formulas involving Cauchy polynomials, Bernoulli	
\vskip .06in polynomials, and generalized Stirling numbers of both kinds\footnote[1]{
{\normalsize This paper has been published in \emph{Axioms} \textbf{2025}, \emph{14}, 746.
\url{https://doi.org/10.3390/axioms14100746}}}
}
\vskip .2in \large Jos\'{e} Luis Cereceda \\
{\normalsize Collado Villalba, 28400 (Madrid), Spain} \\
\href{mailto:jl.cereceda@movistar.es}{\normalsize{\tt jl.cereceda@movistar.es}}
\end{center}

\begin{abstract}
In this paper, we derive novel formulas and identities connecting Cauchy numbers and polynomials with both ordinary and generalized Stirling numbers, binomial coefficients, central factorial numbers, Euler polynomials, $r$-Whitney numbers, and hyperharmonic polynomials, as well as Bernoulli numbers and polynomials. We also provide formulas for the higher-order derivatives of Cauchy polynomials and obtain corresponding formulas and identities for poly-Cauchy polynomials. Furthermore, we introduce a multiparameter framework for poly-Cauchy polynomials, unifying earlier generalizations like shifted poly-Cauchy numbers and polynomials with a $q$ parameter.
\end{abstract}

\section{Introduction}\label{sec:1}

Cauchy polynomials of the first and second kind, $c_n(x)$ and $\widehat{c}_n(x)$, are defined by their respective generating functions (see~\cite{cheon} (Section 5) and~\cite{mezo} (Equations~(1) and (2))):
\begin{equation}\label{gf1}
\frac{t}{(1+t)^x \ln(1+t)} = \sum_{n=0}^{\infty} c_n(x) \frac{t^n}{n!}  \quad (|t| < 1),
\end{equation}
and
\begin{equation}\label{gf2}
\frac{t (1+t)^x}{(1+t) \ln(1+t)} = \sum_{n=0}^{\infty} \widehat{c}_n(x) \frac{t^n}{n!}  \quad (|t| < 1).
\end{equation}
(Note that $t$ is replaced by $-t$ in the definition of $\widehat{c}_n(x)$ in~\cite{cheon} (Theorem 5.2)). From \eqref{gf1} and~\eqref{gf2}, it follows that $c_n(x)$ and $\widehat{c}_n(x)$ are related by $\widehat{c}_n(x) = c_n(1-x)$ or $c_n(x) = \widehat{c}_n(1-x)$. When $x=0$, $c_n = c_n(0)$ and $\widehat{c}_n = \widehat{c}_n(0)$ are the classical Cauchy numbers of the first and second kind, respectively. Some properties of these numbers are recorded in Exercise 13 on p.~293 of Comtet's influential book~\cite{comtet}, with~many more established in Merlini~et~al.'s seminal paper~\cite{merlini}. We additionally note that the numbers $c_n/n!$ are sometimes referred to as Bernoulli numbers of the second kind (see, e.g.,~\cite{agoh,qi}).

The Cauchy polynomials $c_n(x)$ and $\widehat{c}_n(x)$ can be defined equivalently by their respective integral representations~\cite{cheon} (Section 5):
\begin{equation}\label{defcaup}
c_n(x) = n! \int_{0}^{1} \binom{t - x}{n} dt \quad \, \text{and} \quad
\widehat{c}_n(x) = n! \int_{0}^{1} \binom{x - t}{n} dt,
\end{equation}
which we will use frequently throughout the paper. The~first few Cauchy polynomials are given by
\begin{align*}
c_0(x) & = 1, \quad c_1 (x) = \frac{1}{2} -x, \quad c_2(x) = -\frac{1}{6} +x^2,
\quad c_3(x) = \frac{1}{4}- \frac{3}{2}x^2 - x^3, \\
c_4(x) & = -\frac{19}{30} +4 x^2 + 4x^3 + x^4, \quad c_5(x) = \frac{9}{4} -15x^2 -\frac{55}{3} x^3 -\frac{15}{2}x^4 -x^5, \\
c_6(x) & = -\frac{863}{84} + 72x^2 +100x^3 + \frac{105}{2}x^4 +12x^5 + x^6,
\end{align*}
and
\begin{align*}
\widehat{c}_0(x) & = 1, \quad \widehat{c}_1 (x) = -\frac{1}{2} +x, \quad \widehat{c}_2(x) = \frac{5}{6} -2x +x^2,
\quad \widehat{c}_3(x) = -\frac{9}{4} +6x - \frac{9}{2}x^2 + x^3, \\
\widehat{c}_4(x) & = \frac{251}{30} -24x +22 x^2 - 8x^3 + x^4, \,\, \widehat{c}_5(x) = -\frac{475}{12} +120x -125x^2 +\frac{175}{3} x^3 -\frac{25}{2}x^4 +x^5, \\
\widehat{c}_6(x) & = \frac{19087}{84} -720 x +822 x^2 -450x^3 +\frac{255}{2}x^4 -18x^5 + x^6.
\end{align*}

Cauchy numbers and polynomials have been extensively studied and generalized over the years, especially by Professor T. Komatsu. Such generalizations include poly-Cauchy numbers and polynomials~\cite{komatsu,kamano,komatsu4}, poly-Cauchy numbers with a $q$ parameter~\cite{komatsu2,umbralq}, hypergeometric Cauchy numbers and polynomials~\cite{hyper,yuan}, shifted poly-Cauchy numbers~\cite{komatsu5}, incomplete Cauchy and poly-Cauchy numbers and polynomials~\cite{incom,incom2}, higher-order Cauchy numbers and polynomials~\cite{kim}, multiparameter poly-Cauchy numbers and polynomials~\cite{gomaa,young,guettal}, $q$-multiparameter-Cauchy polynomials by Jackson's integrals~\cite{qpoly,qpoly2}, multi-poly Cauchy numbers and polynomials~\cite{kim2,lacpao}, and~higher-level poly-Cauchy numbers~\cite{sirvent}.

This paper is mainly concerned with original Cauchy numbers and polynomials of both kinds, as well as their extension to poly-Cauchy equivalents. As~this paper makes clear, there is still much room for exploring and eventually discovering new fundamental relations and properties for Cauchy numbers and polynomials. Indeed, in~the first part of this article (Sections \ref{sec:2}--\ref{sec:5}), we present a variety of novel formulas and identities relating the Cauchy numbers and polynomials with several families of well-known numbers and polynomials, such as generalized Stirling numbers, the~binomial coefficients, and~Bernoulli numbers and polynomials. We further obtain new recurrence and higher-order derivative formulas for Cauchy polynomials. As~an example, in~Equation~\eqref{hyp5} below, we derive the following recurrence formula for Cauchy polynomials of the second kind:
\begin{equation*}
\frac{\widehat{c}_n(x)}{n!} = \binom{x}{n} + \sum_{m=0}^{n-1} \frac{\widehat{c}_m(x)}{m!}
\, \frac{(-1)^{n-m}}{(n-m)(n+1-m)}, \quad n \geq 1.
\end{equation*}

In the second part of this article (Sections \ref{sec:6} and \ref{sec:7}), we focus on poly-Cauchy numbers and polynomials and their extension to multiparameter poly-Cauchy equivalents. Specifically, in~Section~\ref{sec:6}, we generalize some of the results previously obtained for Cauchy polynomials to poly-Cauchy cases. Furthermore, in~Section~\ref{sec:7}, we define a type of multiparameter poly-Cauchy and multiparameter poly-Bernoulli polynomial, deriving relationships between~them.

\begin{remark}
Due to the generic nature of the results described in this paper, they may be reduced to a number of well-known Cauchy number and polynomial identities. Whenever possible, we will acknowledge these original identities by referencing the appropriate source. For~example, when $x=0$, the~above recurrence formula becomes
\begin{equation*}
\frac{\widehat{c}_n}{n!} = \sum_{m=0}^{n-1} \frac{\widehat{c}_m}{m!} \,
\frac{(-1)^{n-m}}{(n-m)(n+1-m)}, \quad n \geq 1,
\end{equation*}
which reproduces the first identity in Theorem 2.6 of~\cite{merlini}.
\end{remark}

We now introduce generalized Stirling numbers (GSNs for short) of the first and second kind, which play an important role in the Cauchy number and polynomial theory. Following Carlitz~\cite{carlitz} (Equations~(5.2) and (3.2)) and Broder~\cite{broder} (Equations~(56) and (57)) the GSNs of the first and second kind, $\genfrac{[}{]}{0pt}{}{n}{m}_x$ and $\genfrac{\{}{\}}{0pt}{}{n}{m}_x$, are defined by
\begin{equation}
\genfrac{[}{]}{0pt}{}{n}{m}_x = \sum_{i=0}^{n-m} \binom{i+m}{m}
\genfrac{[}{]}{0pt}{}{n}{i+m} x^i, \quad 0 \leq m \leq n, \label{gs1}
\end{equation}
and
\begin{equation}
\genfrac{\{}{\}}{0pt}{}{n}{m}_x = \sum_{i=0}^{n-m} \binom{n}{i}
\genfrac{\{}{\}}{0pt}{}{n-i}{m} x^i, \quad 0 \leq m \leq n, \label{gs2}
\end{equation}
where $x \in \mathbb{R}$, $\genfrac{[}{]}{0pt}{}{n}{m}$ are the (unsigned) Stirling numbers of the first kind, and~$\genfrac{\{}{\}}{0pt}{}{n}{m}$ are those of the~second.

We note that GSNs of the first kind have an alternative representation~\cite{broder} (Theorem~28):
\begin{equation}\label{rep1}
\genfrac{[}{]}{0pt}{}{n}{m}_x = \frac{n!}{m!} \, \frac{d^m}{dx^m} \binom{x+n-1}{n},
\end{equation}
where $\frac{d^m}{dx^m} f(x)$ denotes the $m$-th derivative of $f(x)$ with respect to $x$. For~their part, GSNs of the second kind have an explicit formula~\cite{broder} (Theorem 29):
\begin{equation}\label{gs22}
\genfrac{\{}{\}}{0pt}{}{n}{m}_x = \frac{1}{m!} \sum_{l=0}^m (-1)^{m-l} \binom{m}{l} (x+l)^n.
\end{equation}

When $x=0$, $\genfrac{[}{]}{0pt}{}{n}{m}_0$ and $\genfrac{\{}{\}}{0pt}{}{n}{m}_0$ are ordinary Stirling numbers of the first and second kind, $\genfrac{[}{]}{0pt}{}{n}{m}$ and $\genfrac{\{}{\}}{0pt}{}{n}{m}$. In~general, when $x$ is the non-negative integer $r$, $\genfrac{[}{]}{0pt}{}{n}{m}_r$ and $\genfrac{\{}{\}}{0pt}{}{n}{m}_r$ correspond to $r$-Stirling numbers of the first and second kind, $\genfrac{[}{]}{0pt}{}{n+r}{m+r}_r$ and $\genfrac{\{}{\}}{0pt}{}{n+r}{m+r}_r$, as~introduced by Broder~\cite{broder}. Moreover, the~orthogonality relations for the $r$-Stirling numbers (\cite{broder}, Theorems~5 and 6) extend to the GSNs as follows:
\begin{equation*}
\sum_{l =m}^n (-1)^{n-l} \genfrac{[}{]}{0pt}{}{n}{l}_x  \genfrac{\{}{\}}{0pt}{}{l}{m}_x
= \sum_{l= m}^n (-1)^{n-l} \genfrac{\{}{\}}{0pt}{}{n}{l}_x  \genfrac{[}{]}{0pt}{}{l}{m}_x = \delta_{n,m},
\end{equation*}
where $\delta_{n,m}$ is the Kronecker delta. As~a consequence, the~following inversion formula
\begin{equation}\label{inv}
f_n = \sum_{m=0}^n (-1)^{n-m} \genfrac{[}{]}{0pt}{}{n}{m}_x g_m  \quad \Longleftrightarrow \quad
g_n =  \sum_{m=0}^n \genfrac{\{}{\}}{0pt}{}{n}{m}_x f_m,
\end{equation}
holds true for sequences $\{f_n \}_{n \geq0}$ and $\{g_n \}_{n \geq0}$.

On the other hand, for~arbitrary real numbers $\alpha$ and $x$, the~generalized Bernoulli polynomials $B_n^{(\alpha)}(x)$ (also known as higher-order Bernoulli polynomials) are defined by the generating function (see, e.g.,~\cite{elezovic,boutiche})
\begin{equation}\label{gfgen}
\left( \frac{t}{e^t -1}\right)^{\alpha} e^{xt} = \sum_{n=0}^{\infty} B_n^{(\alpha)}(x) \frac{t^n}{n!} \quad (|t|< 2\pi).
\end{equation}
For $\alpha =1$, $B_n(x) = B_n^{(1)}(x)$ are the classical Bernoulli polynomials. Furthermore, when $x=0$, $B_n^{(\alpha)} = B_n^{(\alpha)}(0)$ are called the Bernoulli numbers of order $\alpha$. Ordinary Bernoulli numbers are then given by $B_n = B_n^{(1)}$, with~$B_0 =1$, $B_1 = -\frac{1}{2}$, $B_2 =\frac{1}{6}$, $B_3 =0$, $B_4 = -\frac{1}{30}$,~etc.

We also explicitly mention the power sum polynomials $S_n(x)$. These can be defined in terms of Bernoulli polynomials as follows~\cite{mikkawy} (Equation~(15)):
\begin{equation}\label{defb}
S_{n-1}(x-1) = \frac{1}{n} \big( B_n(x) - B_n(1) \big), \quad n \geq 1.
\end{equation}
If $m$ is any given positive integer, $S_n(m)$ is equal to the sum of the $n$-th powers of the first $m$ positive integers $1^n +2^n + \cdots + m^n$.

The remainder of this paper is summarized as follows:

In Section~\ref{sec:2}, we derive several formulas and identities involving Cauchy numbers and polynomials, ordinary and generalized Stirling numbers, binomial coefficients, central factorial numbers, Euler polynomials, and~$r$-Whitney numbers. In~Section~\ref{sec:3}, by~using the representation \eqref{rep1}, we establish some formulas for the higher-order derivatives of Cauchy polynomials. In~Section~\ref{sec:4}, we describe a connection between Cauchy polynomials and hyperharmonic polynomials. In~Section~\ref{sec:5}, we obtain explicit formulas for Bernoulli polynomials in terms of GSNs of the second kind and corresponding Cauchy numbers and polynomials. We then derive some further relationships between Bernoulli and Cauchy polynomials. Subsequently, in~Section~\ref{sec:6}, we provide several formulas for the poly-Cauchy polynomials by generalizing previous results concerning ordinary Cauchy equivalents. Furthermore, elaborating on earlier work by Komatsu et~al. \cite{komatsu2,komatsu3,komatsu5}, in~Section~\ref{sec:7} we formulate a type of multiparameter poly-Cauchy polynomial encompassing, in~particular, shifted poly-Cauchy numbers and polynomials with a $q$~parameter. Finally, we outline our~conclusions.

\section{Basic Cauchy Polynomial~Formulas}\label{sec:2}

We begin this section by deriving several formulas and identities connecting the Cauchy numbers and polynomials with the generalized Stirling numbers, the~Stirling numbers of the first kind, and the binomial coefficients. Then, we will obtain specific formulas for $c_{2n}(x)$, $\widehat{c}_{2n}(x)$, $c_{2n+1}(x)$, and~$\widehat{c}_{2n+1}(x)$ in terms of the central factorial numbers and the respective Bernoulli numbers and Euler polynomials. Lastly, we point out a relationship between the Cauchy polynomials and $r$-Whitney~numbers.

\subsection{Cauchy Polynomials and Generalized Stirling~Numbers}

The following proposition expresses Cauchy polynomials of both kinds in terms of the GSN of the first kind. This result constitutes a fundamental property of Cauchy polynomials and will be the basis for many of our subsequent findings. For~this reason, while the result is already known, we now provide alternative proof of it by exploiting the relationship between the GSN of the first kind and the so-called {\it generalized Stirling polynomials of the first kind} introduced by Adamchik in~\cite{adam} (Equation~(18)).
\begin{proposition}\label{prop:1}
For integers $n \geq 0$ and for arbitrary $x$, we have
\begin{equation}\label{th11}
c_n(x) = \sum_{m=0}^n \frac{(-1)^{n-m}}{m+1} \genfrac{[}{]}{0pt}{}{n}{m}_x,
\end{equation}
and
\begin{equation}\label{th12}
\widehat{c}_n(-x) = (-1)^n \sum_{m=0}^n \frac{1}{m+1} \genfrac{[}{]}{0pt}{}{n}{m}_x,
\end{equation}
where $\genfrac{[}{]}{0pt}{}{n}{m}_x$ is the polynomial defined in \eqref{gs1}.
\end{proposition}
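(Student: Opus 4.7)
The plan is to combine the integral representations in~(\ref{defcaup}) with a polynomial identity that expands the falling factorial $n!\binom{t-x}{n}$, viewed as a polynomial in $t$, directly in terms of the GSNs $\genfrac{[}{]}{0pt}{}{n}{m}_x$. Once that expansion is in hand, both formulas follow by term-by-term integration on $[0,1]$, using the elementary $\int_0^1 t^m\, dt = 1/(m+1)$.

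The central algebraic step is the rising-factorial identity
\begin{equation*}
(y+x)^{\overline{n}} := (y+x)(y+x+1)\cdots(y+x+n-1) = \sum_{m=0}^n \genfrac{[}{]}{0pt}{}{n}{m}_x\, y^m,
\end{equation*}
which I would deduce directly from the defining sum~(\ref{gs1}). Concretely, I would start from the classical expansion $(y+x)^{\overline{n}} = \sum_{k=0}^n \genfrac{[}{]}{0pt}{}{n}{k}(y+x)^k$, apply the binomial theorem to $(y+x)^k$, swap the two summations, and substitute $i = k-m$; the resulting coefficient of $y^m$ is visibly $\sum_{i=0}^{n-m}\binom{i+m}{m}\genfrac{[}{]}{0pt}{}{n}{i+m}x^i$, which is exactly $\genfrac{[}{]}{0pt}{}{n}{m}_x$.

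With this identity I would turn to $c_n(x)$. Writing the integrand as a falling factorial and flipping signs,
\begin{equation*}
n!\binom{t-x}{n} = (t-x)(t-x-1)\cdots(t-x-n+1) = (-1)^n (x-t)^{\overline{n}},
\end{equation*}
and specializing the key identity at $y = -t$ gives $n!\binom{t-x}{n} = \sum_{m=0}^n (-1)^{n-m}\genfrac{[}{]}{0pt}{}{n}{m}_x\, t^m$. Integrating on $[0,1]$ yields~(\ref{th11}). For the second kind, the same manipulation produces $n!\binom{-x-t}{n} = (-1)^n(x+t)^{\overline{n}}$, so specializing at $y = t$ gives $n!\binom{-x-t}{n} = (-1)^n \sum_m \genfrac{[}{]}{0pt}{}{n}{m}_x\, t^m$, and integration delivers~(\ref{th12}).

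The only real obstacle is the rising-factorial identity above, and even that reduces to a single careful rearrangement of the finite sum in~(\ref{gs1}); everything else is bookkeeping of signs (the $(-1)^n$ from converting falling into rising factorials and the $(-1)^m$ from $y=-t$) together with the trivial monomial integration. This keeps the argument entirely elementary while matching the spirit of the Adamchik generalized Stirling polynomial viewpoint referenced in the proposition.
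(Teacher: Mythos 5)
Your proposal is correct: the key identity $(y+x)^{\overline{n}} = \sum_{m=0}^n \genfrac{[}{]}{0pt}{}{n}{m}_x\, y^m$ does follow from the classical expansion $z^{\overline{n}} = \sum_k \genfrac{[}{]}{0pt}{}{n}{k} z^k$ with $z=y+x$, the binomial theorem, and the re-indexing $i=k-m$, which reproduces exactly the defining sum \eqref{gs1}; and your sign bookkeeping ($n!\binom{t-x}{n} = (-1)^n(x-t)^{\overline{n}}$, $n!\binom{-x-t}{n} = (-1)^n(x+t)^{\overline{n}}$) and term-by-term integration then give \eqref{th11} and \eqref{th12} precisely. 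The skeleton is the same as the paper's --- expand the integrand of \eqref{defcaup} as a polynomial in $t$ with GSN coefficients, then integrate monomials over $[0,1]$ --- but you establish the crucial expansion differently. The paper deliberately routes through Adamchik's generalized Stirling polynomials $P_{n+1,m}(x)$ and their generating function \eqref{th1p1}, and then must identify $P_{n+1,m}(x+1) = \genfrac{[}{]}{0pt}{}{n}{m}_{-x}$, which it does by citing an identity from an external reference (Equation (24) of the paper cited as \cite{cere}); indeed, exhibiting that connection to Adamchik's polynomials was the paper's stated reason for giving this ``alternative proof'' of an already-known result. Your route proves the needed expansion directly from the definition \eqref{gs1}, so it is fully self-contained and more elementary: in effect, you prove in two lines the very fact the paper imports from \cite{cere}. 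What you lose is only the explicit link to Adamchik's polynomials, which was the paper's expository point rather than a logical necessity.
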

\begin{proof}
To prove \eqref{th12}, consider the polynomials (cf.\ \cite{adam}, Equation~(18))
\begin{equation}\label{adam}
P_{n+1,m}(x) = \sum_{l=m}^{n} \binom{l}{m} \genfrac{[}{]}{0pt}{}{n+1}{l+1} (-x)^{l-m}, \quad 0 \leq m \leq n,
\end{equation}
with the generating function~\cite{adam} (p.\ 276)
\begin{equation}\label{th1p1}
\prod_{l=1}^{n} (t-x+l) = \sum_{m=0}^{n} P_{n+1,m}(x) \, t^m.
\end{equation}
Since
\begin{equation*}
\prod_{l=1}^{n} (t-x+l) = n! \binom{t-x+n}{n} = (-1)^n n! \binom{x-t-1}{n},
\end{equation*}
and $\widehat{c}_n(x) = n! \int_{0}^{1} \binom{x - t}{n} dt$, it follows that
\begin{equation*}
\widehat{c}_n(x) = (-1)^n \sum_{m=0}^n P_{n+1,m}(x+1) \int_{0}^{1} t^m dt =
(-1)^n \sum_{m=0}^n \frac{1}{m+1} P_{n+1,m}(x+1).
\end{equation*}
Hence, to~derive \eqref{th12}, it suffices to show that $P_{n+1,m}(x+1) = \genfrac{[}{]}{0pt}{}{n}{m}_{-x}$ or, equivalently, that $P_{n+1,m}(-x) = \genfrac{[}{]}{0pt}{}{n}{m}_{x+1}$. To~show this, note that, from~the definition \eqref{adam} above, we have
\begin{equation*}
P_{n+1,m}(-x) = \sum_{i=m}^{n} \binom{i}{m} \genfrac{[}{]}{0pt}{}{n+1}{i+1} x^{i-m}
 = \sum_{i=0}^{n-m} \binom{i+m}{m} \genfrac{[}{]}{0pt}{}{n+1}{i+m+1} x^{i}.
\end{equation*}
However, by~the identity in~\cite{cere} (Equation~(24)), the~rightmost side of this last equation is equal to $\genfrac{[}{]}{0pt}{}{n}{m}_{x+1}$, and~the proof of \eqref{th12} is complete. Similarly, Formula \eqref{th11} can be deduced from \eqref{th1p1} by replacing $t$ by $-t$ and $x$ by $1-x$.
\end{proof}

The following are a few observations regarding the above proposition.
\begin{itemize}
\item
If $x$ is the non-negative integer $r$, identities \eqref{th11} and \eqref{th12} become
\begin{equation*}
c_n(r) = \sum_{m=0}^n \frac{(-1)^{n-m}}{m+1} \genfrac{[}{]}{0pt}{}{n}{m}_r
\quad \, \text{and}\quad \widehat{c}_n(-r) = (-1)^n \sum_{m=0}^n \frac{1}{m+1} \genfrac{[}{]}{0pt}{}{n}{m}_r,
\end{equation*}
respectively, thus recovering Theorems 1 and 2 of~\cite{mezo}. In~particular, for~$r=0$, we have (\cite{merlini}, pp.~1908, 1910)
\begin{equation*}
c_n = \sum_{m=0}^n \frac{(-1)^{n-m}}{m+1} \genfrac{[}{]}{0pt}{}{n}{m}
\quad \, \text{and}\quad \widehat{c}_n = (-1)^n \sum_{m=0}^n \frac{1}{m+1} \genfrac{[}{]}{0pt}{}{n}{m}.
\end{equation*}

\item
In view of \eqref{th12}, $\widehat{c}_n(x)$ can equally be expressed as
\begin{equation}\label{rem31}
\widehat{c}_n(x) = (-1)^n \sum_{m=0}^n \frac{1}{m+1} \genfrac{[}{]}{0pt}{}{n}{m}_{-x},
\end{equation}
where
\begin{equation}\label{rem32}
\genfrac{[}{]}{0pt}{}{n}{m}_{-x} = \sum_{i=0}^{n-m} (-1)^i \binom{i+m}{m} \genfrac{[}{]}{0pt}{}{n}{i+m} x^i.
\end{equation}
Alternatively, we have
\begin{equation*}
\widehat{c}_n(x) = \sum_{m=0}^n \frac{(-1)^{n-m}}{m+1} \genfrac{[}{]}{0pt}{}{n}{m}_{1-x},
\end{equation*}
from which we obtain (\cite{kargin}, Equation~(16))
\begin{equation}\label{kargin}
\widehat{c}_n = \sum_{m=0}^n \frac{(-1)^{n-m}}{m+1} \genfrac{[}{]}{0pt}{}{n+1}{m+1}.
\end{equation}

\item
Inverting \eqref{th11} and \eqref{th12} yields
\begin{equation}\label{rem4}
\sum_{m=0}^n \genfrac{\{}{\}}{0pt}{}{n}{m}_x c_m(x) = \frac{1}{n+1}
\quad \, \text{and}\quad \sum_{m=0}^n \genfrac{\{}{\}}{0pt}{}{n}{m}_x \widehat{c}_m(-x)
= \frac{(-1)^n}{n+1},
\end{equation}
generalizing Theorem 3 of~\cite{mezo}. For~$x=0$, this gives
\begin{equation*}
\sum_{m=0}^n \genfrac{\{}{\}}{0pt}{}{n}{m} c_m = \frac{1}{n+1}
\quad \, \text{and}\quad \sum_{m=0}^n \genfrac{\{}{\}}{0pt}{}{n}{m} \widehat{c}_m
= \frac{(-1)^n}{n+1},
\end{equation*}
which corresponds to Theorem 2.3 and the second identity in Theorem 2.6 of~\cite{merlini}, respectively.
\end{itemize}

The following corollary provides an explicit formula for the coefficients of $c_n(x)$ and~$\widehat{c}_n(x)$.
\begin{corollary}\label{col:1}
Let $c_n(x) = \sum_{i=0}^n c_{n,i} x^i$ and $\widehat{c}_n(x) = \sum_{i=0}^n \widehat{c}_{n,i} x^i$ be Cauchy polynomials of the first and second kind, respectively. Then, for~$i = 0,1,\ldots,n$, we have
\begin{equation}
c_{n,i} = (-1)^{n+i} \sum_{m=i}^n \frac{(-1)^m}{m-i+1} \binom{m}{i}
\genfrac{[}{]}{0pt}{}{n}{m}, \label{coef1}
\end{equation}
and
\begin{equation}
\widehat{c}_{n,i} = (-1)^{n+i} \sum_{m=i}^n \frac{1}{m-i+1} \binom{m}{i}
\genfrac{[}{]}{0pt}{}{n}{m}. \label{coef2}
\end{equation}
\end{corollary}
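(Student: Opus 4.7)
The proof will be a direct computation: substitute the defining sum \eqref{gs1} of $\genfrac{[}{]}{0pt}{}{n}{m}_x$ into the representation \eqref{th11} for $c_n(x)$ provided by Proposition~\ref{prop:1}, switch the order of summation, and read off the coefficient of $x^i$.

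In more detail, for the first-kind case I would write
\begin{equation*}
c_n(x) \;=\; \sum_{m=0}^n \frac{(-1)^{n-m}}{m+1} \sum_{j=0}^{n-m} \binom{j+m}{m} \genfrac{[}{]}{0pt}{}{n}{j+m} x^{j},
\end{equation*}
change the summation variable with $k = j+m$ so that the inner sum runs over $k$ from $m$ to $n$ with $x^{k-m}$, and then interchange the two sums. Collecting the terms with $x^{k-m}=x^i$ (so $m = k-i$, $k \geq i$) gives
\begin{equation*}
c_{n,i} \;=\; \sum_{k=i}^{n} \frac{(-1)^{n-k+i}}{k-i+1} \binom{k}{i} \genfrac{[}{]}{0pt}{}{n}{k},
\end{equation*}
and pulling out $(-1)^{n+i}$ yields \eqref{coef1} after renaming $k$ to $m$.

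For the second-kind case I would proceed identically, but start from the representation \eqref{rem31}--\eqref{rem32}, namely $\widehat{c}_n(x) = (-1)^n \sum_{m=0}^n \frac{1}{m+1} \genfrac{[}{]}{0pt}{}{n}{m}_{-x}$ with $\genfrac{[}{]}{0pt}{}{n}{m}_{-x} = \sum_{j=0}^{n-m} (-1)^j \binom{j+m}{m} \genfrac{[}{]}{0pt}{}{n}{j+m} x^j$. Now the sign $(-1)^j$ turns into $(-1)^{k-m}$ after the substitution $k=j+m$, combines with the overall $(-1)^n$, and simplifies to the single sign factor $(-1)^{n+i}$ (independent of $m$), giving \eqref{coef2}. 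Alternatively, one can derive \eqref{coef2} from \eqref{coef1} by using $\widehat{c}_n(x) = c_n(1-x)$ and binomial expansion, but the direct route above is shorter and avoids an extra combinatorial identity.

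No genuine obstacle is expected; the whole argument is a bookkeeping exercise with summation indices and signs, and the main care needed is in handling the signs when passing from $\genfrac{[}{]}{0pt}{}{n}{m}_x$ to $\genfrac{[}{]}{0pt}{}{n}{m}_{-x}$ in the second-kind computation so that the final factor $(-1)^{n+i}$ emerges cleanly outside the sum.
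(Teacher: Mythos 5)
Your proposal is correct and follows exactly the paper's own proof: the paper derives \eqref{coef1} by substituting \eqref{gs1} into \eqref{th11}, and \eqref{coef2} by substituting \eqref{rem32} into \eqref{rem31}, which is precisely your route. Your index shift $k=j+m$, the use of $\binom{k}{k-i}=\binom{k}{i}$, and the observation that $(-1)^{k-m}=(-1)^i$ on the diagonal $m=k-i$ are all the bookkeeping the paper leaves implicit in its one-line proof.
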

\begin{proof}
Substituting \eqref{gs1} into \eqref{th11}, we immediately derive \eqref{coef1}. Similarly, substituting \eqref{rem32} in \eqref{rem31}, we obtain \eqref{coef2}.
\end{proof}

Clearly, the~coefficients $c_{n,0}$ and $\widehat{c}_{n,0}$ reduce to the known Cauchy number formulas $c_{n}$ and $\widehat{c}_{n}$, respectively. Furthermore, the~leading coefficients are given by $c_{n,n} = (-1)^n$ and $\widehat{c}_{n,n} = 1$ for all $n \geq 0$; likewise, $c_{n,n-1} = \frac{(-1)^n}{2} n(n -2)$ and $\widehat{c}_{n,n-1} = -\frac{1}{2}n^2$ for all $n \geq 1$.

\subsection{Cauchy Polynomials and Stirling Numbers of the First~Kind}

In this subsection, we derive an explicit Cauchy polynomial formula in terms of ordinary Stirling numbers of the first kind. Before proving this result, we present the following integration formulas involving power sum polynomials.
\begin{lemma}\label{lm:1}
Let $S_n(x)$ be the polynomial defined in \eqref{defb} and let $y \in \mathbb{R}$. Then, for~any integers $n \geq 1$ and for arbitrary $y$, we have
\begin{equation}\label{lm11}
\int_{0}^{1} S_{n-1}(x+y-1) \text{d}x = \frac{1}{n} \big( y^n - B_n(1) \big),
\end{equation}
and
\begin{equation}\label{lm12}
\int_{0}^{1} S_{n-1}(-x+y-1) \text{d}x = \frac{1}{n} \big( (y-1)^n - B_n(1) \big).
\end{equation}
\end{lemma}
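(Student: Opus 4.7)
The plan is to reduce both integrals to the classical identity
\[
\int_{a}^{a+1} B_n(t)\, dt \;=\; a^n,
\]
which is itself an immediate consequence of the Bernoulli difference relation $B_{n+1}(a+1)-B_{n+1}(a)=(n+1)a^n$ together with the antiderivative rule $B_{n+1}'(t)=(n+1)B_n(t)$.

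For the first identity \eqref{lm11}, I would start by applying the defining relation \eqref{defb} with argument $x+y$ in place of $x$, obtaining
\[
S_{n-1}(x+y-1) \;=\; \frac{1}{n}\bigl(B_n(x+y)-B_n(1)\bigr).
\]
Integrating both sides from $0$ to $1$ in $x$, the $B_n(1)$ term contributes $-\tfrac{1}{n}B_n(1)$, while for the remaining piece I would perform the substitution $u=x+y$, $du=dx$, yielding $\int_y^{y+1} B_n(u)\, du = y^n$ by the classical identity recalled above. Combining these gives exactly $\tfrac{1}{n}\bigl(y^n-B_n(1)\bigr)$.

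For the second identity \eqref{lm12}, the approach is parallel but with a sign flip in the substitution. First, \eqref{defb} with argument $-x+y$ gives
\[
S_{n-1}(-x+y-1) \;=\; \frac{1}{n}\bigl(B_n(-x+y)-B_n(1)\bigr).
\]
Now substitute $u=-x+y$, $du=-dx$; as $x$ runs from $0$ to $1$, $u$ runs from $y$ down to $y-1$, so the orientation flip absorbs the minus sign and produces $\int_{y-1}^{y} B_n(u)\, du = (y-1)^n$ by the same classical identity applied at $a=y-1$. Assembling the pieces yields $\tfrac{1}{n}\bigl((y-1)^n-B_n(1)\bigr)$.

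Neither step presents a genuine obstacle: the only care needed is to remember that the Bernoulli-polynomial integral-over-a-unit-interval identity is what converts the integral of $B_n$ into a clean $n$th power, and to handle the limits correctly in the orientation-reversing substitution for \eqref{lm12}. If the referee prefers a self-contained write-up, I would include the one-line derivation of $\int_a^{a+1} B_n(t)\,dt=a^n$ from $B_{n+1}(a+1)-B_{n+1}(a)=(n+1)a^n$ rather than invoke it as a black box.
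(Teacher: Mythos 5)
Your proof is correct and follows essentially the same route as the paper's: both combine the defining relation \eqref{defb} with the Bernoulli-polynomial integration formula and the difference equation $B_{n+1}(a+1)-B_{n+1}(a)=(n+1)a^n$, which you merely package into the single identity $\int_a^{a+1}B_n(t)\,dt=a^n$. The paper's proof is just a terser statement of exactly this argument, citing the two Bernoulli properties from Apostol and leaving the substitutions implicit.
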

\begin{proof}
The above formulas readily follow from combining \eqref{defb} with the following two properties of the Bernoulli polynomials, namely, the~integration formula~\cite{apostol} (Equation~(29))
\begin{equation*}
\int_{x}^{y} B_n(t) dt = \frac{1}{n+1} \big( B_{n+1}(y) - B_{n+1}(x) \big), \quad n \geq 0,
\end{equation*}
and the difference equation $B_{n}(x+1) - B_{n}(x) = n x^{n-1}$, $n \geq 1$ \cite{apostol} (Equation~(14)).
\end{proof}

Provided with \eqref{lm11}, we now prove the following theorem.
\begin{theorem}\label{th:1}
For integers $n \geq 1$ and for arbitrary $x$, we have
\begin{equation}\label{exp1}
c_n(x) = c_n + (-1)^n n \sum_{m=1}^n \frac{1}{m} \genfrac{[}{]}{0pt}{}{n-1}{m-1} x^m,
\end{equation}
and
\begin{equation}\label{exp2}
\widehat{c}_n(x) = c_n + (-1)^n n \sum_{m=1}^n \frac{(-1)^{m}}{m} \genfrac{[}{]}{0pt}{}{n-1}{m-1}
(x-1)^m,
\end{equation}
where the Cauchy numbers of the first kind $c_n$ can be expressed in terms of Bernoulli numbers as
\begin{equation}
c_n = \delta_{n,1} + (-1)^{n+1} n \sum_{m=1}^n \genfrac{[}{]}{0pt}{}{n-1}{m-1} \frac{B_m}{m},
\quad (n \geq 1). \label{exp3}
\end{equation}
\end{theorem}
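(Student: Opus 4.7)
My plan is to prove \eqref{exp1} and \eqref{exp2} by differentiating the generating functions \eqref{gf1} and \eqref{gf2} with respect to $x$ and integrating back, then to derive \eqref{exp3} by combining the integral representation of $c_n$ with Lemma~\ref{lm:1} and an auxiliary rising-factorial identity.

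For \eqref{exp1}, differentiating \eqref{gf1} in $x$ gives
\[
\sum_{n\geq 0} c_n'(x)\frac{t^n}{n!} = \frac{t}{\ln(1+t)}\bigl(-\ln(1+t)\bigr)(1+t)^{-x} = -t(1+t)^{-x}.
\]
Expanding the binomial series and applying $\binom{-x}{n-1} = (-1)^{n-1}\binom{x+n-2}{n-1}$ yields, for $n\geq 1$, the clean formula $c_n'(x) = (-1)^n n\cdot x(x+1)\cdots(x+n-2)$. Expanding this rising factorial as $\sum_{j=0}^{n-1}\genfrac{[}{]}{0pt}{}{n-1}{j} x^j$ and integrating from $0$ to $x$ (with $c_n(0)=c_n$) delivers \eqref{exp1} after the reindexing $m=j+1$. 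An entirely parallel calculation handles \eqref{exp2}: differentiating \eqref{gf2} produces $\sum \widehat{c}_n'(x) t^n/n! = t(1+t)^{x-1}$, whence $\widehat{c}_n'(x) = n(x-1)(x-2)\cdots(x-n+1)$; expanding this falling factorial via the signed Stirling coefficients $(-1)^{n-1-j}\genfrac{[}{]}{0pt}{}{n-1}{j}$ and integrating from $1$ to $x$, using $\widehat{c}_n(1) = c_n(0) = c_n$, yields \eqref{exp2}.

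For \eqref{exp3}, I would start from $c_n = \int_0^1 t(t-1)\cdots(t-n+1)\,dt = (-1)^n \int_0^1 (-t)(-t+1)\cdots(-t+n-1)\,dt$. The crucial auxiliary identity, valid for all real $y$ and $n\geq 1$, is
\[
y(y+1)\cdots(y+n-1) = n\sum_{m=1}^n \genfrac{[}{]}{0pt}{}{n-1}{m-1} S_{m-1}(y),
\]
which I would verify for nonnegative integer $y$ from the telescope $\sum_{k=1}^y k(k+1)\cdots(k+n-2) = y(y+1)\cdots(y+n-1)/n$ combined with the Stirling expansion of $k(k+1)\cdots(k+n-2)$, and then extend to all $y$ by polynomial identity. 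Applying the auxiliary identity with $y=-t$, integrating, and invoking \eqref{lm12} with $y=1$ (after a relabeling of the integration variable) to compute $\int_0^1 S_{m-1}(-t)\,dt = -B_m(1)/m$, I obtain
\[
c_n = (-1)^{n+1} n\sum_{m=1}^n \genfrac{[}{]}{0pt}{}{n-1}{m-1}\frac{B_m(1)}{m}.
\]
Substituting $B_m(1) = B_m + \delta_{m,1}$ and observing that $(-1)^{n+1} n\genfrac{[}{]}{0pt}{}{n-1}{0}$ equals $\delta_{n,1}$ in both the $n=1$ and $n\geq 2$ cases separates off the Kronecker correction and produces \eqref{exp3}.

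The main obstacle is establishing the auxiliary rising-factorial identity and carefully tracking the $\delta_{n,1}$ term that arises from the asymmetry $B_1(1) = \tfrac{1}{2} \neq -\tfrac{1}{2} = B_1$; once these are in hand, the rest of the argument reduces to routine generating-function manipulation and Stirling-number expansions.
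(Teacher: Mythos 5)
Your proof is correct, but it takes a genuinely different route from the paper's. The paper proves all three formulas in one stroke: it starts from the cited identity $n!\binom{x+1}{n+1} = \delta_{n,0} + \sum_{m=0}^n(-1)^{n-m}\genfrac{[}{]}{0pt}{}{n}{m}S_m(x)$, substitutes $n\to n-1$, $x \to x-s-1$, integrates over $[0,1]$, and evaluates the resulting integrals with Lemma~\ref{lm:1} (Equation~\eqref{lm11} with $y=-s$); the Bernoulli part \eqref{exp3} and the polynomial part \eqref{exp1} then fall out simultaneously as, respectively, the constant and the $s$-dependent pieces of a single expression, and \eqref{exp2} follows by the reflection $x\to 1-x$. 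You instead split the work. For \eqref{exp1} and \eqref{exp2} you differentiate the generating functions \eqref{gf1} and \eqref{gf2} in $x$, finding $c_n'(x)=(-1)^n n\,x(x+1)\cdots(x+n-2)$ and $\widehat{c}_n'(x)=n(x-1)(x-2)\cdots(x-n+1)$, then expand these rising/falling factorials in Stirling numbers and integrate back from the base points $0$ and $1$ (using $\widehat{c}_n(1)=c_n(0)=c_n$); this is a clean, self-explanatory derivation of the polynomial structure, and your derivative formula is consistent with the paper's later Equation~\eqref{der12} at $i=1$. For \eqref{exp3} you work from the integral representation of $c_n$, prove the rising-factorial/power-sum identity
\begin{equation*}
y(y+1)\cdots(y+n-1) = n\sum_{m=1}^n \genfrac{[}{]}{0pt}{}{n-1}{m-1} S_{m-1}(y)
\end{equation*}
by telescoping at integer arguments plus polynomial extension (this is essentially the rising-factorial analogue of the identity the paper cites from the literature), and then invoke \eqref{lm12} with $y=1$; your sign bookkeeping, the substitution $B_m(1)=B_m+\delta_{m,1}$, and the observation that $(-1)^{n+1}n\genfrac{[}{]}{0pt}{}{n-1}{0}=\delta_{n,1}$ are all correct. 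What your approach buys is self-containedness and transparency (no external citation is needed, and the shape of $c_n(x)$ is explained directly by its derivative being a rising factorial); what the paper's approach buys is economy, since its single substitution-and-integration step yields \eqref{exp1} and \eqref{exp3} at once, whereas you must establish the constant term by a separate argument.
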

\begin{proof}
We start with the relation (cf.\ \cite{cere2}, Equation~(7))
\begin{equation*}\label{ind1}
n!\binom{x+1}{n+1} = \delta_{n,0} + \sum_{m=0}^n (-1)^{n-m} \genfrac{[}{]}{0pt}{}{n}{m} S_m(x), \quad n \geq 0.
\end{equation*}
{By} setting $n \to n-1$, $x \to x -s -1$, and~integrating both sides over $x$ from $0$ to $1$, we~obtain
\begin{equation}\label{poly3}
\frac{c_{n}(s)}{n} = \delta_{n,1} + \sum_{m=1}^n (-1)^{n-m}
\genfrac{[}{]}{0pt}{}{n-1}{m-1} \int_{0}^{1} S_{m-1}(x-s-1) dx,
\end{equation}
which holds for every $n \geq 1$. On~the other hand, applying \eqref{lm11} for $y = -s$ we find
\begin{equation*}
\int_{0}^{1} S_{m-1}(x-s-1) dx = \frac{1}{m}\big( (-s)^{m} - B_{m}(1) \big).
\end{equation*}
{Thus,} it follows from \eqref{poly3} that
\begin{equation*}
\frac{c_{n}(s)}{n} = \delta_{n,1} + \sum_{m=1}^n \frac{(-1)^{n+1}}{m} \genfrac{[}{]}{0pt}{}{n-1}{m-1} B_{m}
+ \sum_{m=1}^n \frac{(-1)^{n}}{m} \genfrac{[}{]}{0pt}{}{n-1}{m-1} s^{m},
\end{equation*}
where $B_m(1) = (-1)^m B_m$ for all $m \geq 0$ (\cite{mikkawy}, Equation~(8)). Next, renaming $s$ to $x$, we obtain
\begin{equation*}
c_{n}(x) = \delta_{n,1} + (-1)^{n+1} n \sum_{m=1}^n \genfrac{[}{]}{0pt}{}{n-1}{m-1} \frac{B_{m}}{m}
+ (-1)^n n \sum_{m=1}^n \frac{1}{m} \genfrac{[}{]}{0pt}{}{n-1}{m-1} x^{m}.
\end{equation*}
{Clearly,} as~$c_n = c_n(0)$, \eqref{exp3} is true, and~the proof of \eqref{exp1} is complete. Equation~\eqref{exp2} is obtained by letting $x \to 1-x$ in \eqref{exp1}.
\end{proof}

\begin{remark}
In view of \eqref{exp1}, the~coefficients $c_{n,i}$ in the expansion $c_n(x) = c_n + \sum_{i=1}^n c_{n,i} x^i$ are given by
\begin{equation}\label{exp4}
c_{n,i} = (-1)^n \frac{n}{i} \genfrac{[}{]}{0pt}{}{n-1}{i-1}, \quad 1 \leq i \leq n.
\end{equation}
{Therefore,} equating \eqref{coef1} and \eqref{exp4} reveals the identity
\begin{equation*}
\sum_{m=i}^n \frac{(-1)^{m-i}}{m-i+1} \binom{m}{i} \genfrac{[}{]}{0pt}{}{n}{m}
= \frac{n}{i} \genfrac{[}{]}{0pt}{}{n-1}{i-1}, \quad 1 \leq i \leq n.
\end{equation*}
{In} particular, for~$i=1$, we find the well-known identity $\sum_{m=1}^n (-1)^{m} \genfrac{[}{]}{0pt}{}{n}{m} = 0$,  ($n \geq 2$).
\end{remark}

Let us observe that, by~integrating \eqref{exp1} and \eqref{exp2} from $0$ to $1$, we immediately obtain
\begin{equation*}
\int_0^1 c_n(x) dx = \int_0^1 \widehat{c}_n(x) dx = c_n + (-1)^n n \sum_{m=1}^n \frac{1}{m(m+1)} \genfrac{[}{]}{0pt}{}{n-1}{m-1}.
\end{equation*}
{On the} other hand, according to Theorem 1.1 in~\cite{qi} (see also Equation~(17) in~\cite{kargin}), Cauchy numbers of the first kind can be expressed as
\begin{equation*}
c_n = (-1)^{n+1} \sum_{m=1}^n \frac{1}{m(m+1)} \genfrac{[}{]}{0pt}{}{n-1}{m-1},  \quad n \geq 1.
\end{equation*}
{Hence,} combining the last two equations gives us the integration formula
\begin{equation}\label{int1}
\int_0^1 c_n(x) dx = \int_0^1 \widehat{c}_n(x) dx = (1 -n) c_n,  \quad n \geq 0.
\end{equation}

\subsection{Cauchy Polynomials and Binomial~Coefficients}

In order to prove Theorems \ref{th:2} and \ref{th:3} in this subsection, we make use of a number of results recently obtained by Chen and Guo~\cite{chen}. The~following theorem expresses Cauchy polynomials in terms of the corresponding Cauchy numbers and binomial coefficients depending on the indeterminate $x$.
\begin{theorem}\label{th:2}
{For} integers $n \geq 0$ and for arbitrary $x$, we have
\begin{equation}\label{chen1}
c_n(x) = (-1)^n n! \sum_{m=0}^n \frac{\widehat{c}_m}{m!} \binom{x+n-1}{n-m},
\end{equation}
and
\begin{equation}\label{chen2}
\widehat{c}_n(x) = n! \sum_{m=0}^n \frac{(-1)^m \, c_m}{m!} \binom{x-m}{n-m}.
\end{equation}
\end{theorem}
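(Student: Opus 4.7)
The plan is to derive both identities directly from the integral representations in \eqref{defcaup} by expanding $\binom{t-x}{n}$ and $\binom{x-t}{n}$ via Vandermonde-type formulas, so that each summand contains either $\binom{t}{m}$ or $\binom{-t}{m}$ as its only $t$-dependent factor. Specializing \eqref{defcaup} to $x=0$ gives the two basic moments
\begin{equation*}
\int_0^1 \binom{t}{m}\,dt = \frac{c_m}{m!}, \qquad \int_0^1 \binom{-t}{m}\,dt = \frac{\widehat{c}_m}{m!},
\end{equation*}
so termwise integration will produce exactly the Cauchy numbers appearing on the right-hand sides of \eqref{chen1} and \eqref{chen2}.

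For \eqref{chen1} the route is short. First I would rewrite $\binom{t-x}{n} = (-1)^n\binom{(x+n-1)-t}{n}$ using the reflection formula $\binom{-z}{n}=(-1)^n\binom{z+n-1}{n}$. Then Vandermonde's identity, applied with $a=x+n-1$ and $b=-t$, gives
\begin{equation*}
\binom{(x+n-1)+(-t)}{n} = \sum_{m=0}^n \binom{x+n-1}{n-m}\binom{-t}{m}.
\end{equation*}
Substituting into $c_n(x)=n!\int_0^1 \binom{t-x}{n}\,dt$ and using the second moment above yields \eqref{chen1} immediately.

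For \eqref{chen2} the key step is the polynomial identity
\begin{equation*}
\binom{x-t}{n} = \sum_{m=0}^n (-1)^m \binom{x-m}{n-m}\binom{t}{m},
\end{equation*}
which is not a direct instance of Vandermonde, since the factor $\binom{x-m}{n-m}$ depends on the summation index $m$. Establishing this auxiliary identity is the main obstacle. I would handle it by a short generating-function argument: multiplying by $z^n$ and summing over $n\geq 0$, the right-hand side becomes
\begin{equation*}
\sum_{m=0}^\infty (-1)^m \binom{t}{m} z^m (1+z)^{x-m} = (1+z)^x \left(1 - \frac{z}{1+z}\right)^t = (1+z)^{x-t},
\end{equation*}
which equals $\sum_n \binom{x-t}{n}z^n$; comparing the coefficients of $z^n$ gives the identity. (Alternatively, it appears in Chen and Guo~\cite{chen}, the source the theorem already credits.) Termwise integration of the expansion then yields \eqref{chen2}.
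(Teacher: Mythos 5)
Your proof is correct, but it takes a genuinely different route from the paper's. The paper never touches the integral representations here: it starts from Proposition \ref{prop:1}, i.e.\ $c_n(x)=\sum_{m=0}^n \frac{(-1)^{n-m}}{m+1}\genfrac{[}{]}{0pt}{}{n}{m}_x$, then invokes the Chen--Guo summation formula to write the generalized Stirling number $\genfrac{[}{]}{0pt}{}{n}{k}_x$ in terms of the sequence $\mathcal{A}_x(n,m)=\frac{n!}{m!}\binom{x+n-1}{n-m}$ and ordinary Stirling numbers of the first kind, swaps the order of summation, and recognizes the inner sum $\sum_{k=0}^m\frac{1}{k+1}\genfrac{[}{]}{0pt}{}{m}{k}$ as $(-1)^m\widehat{c}_m$. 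You instead work directly from \eqref{defcaup}: for \eqref{chen1} the reflection $\binom{t-x}{n}=(-1)^n\binom{(x+n-1)-t}{n}$ plus Chu--Vandermonde reduces everything to the moment $\int_0^1\binom{-t}{m}\,dt=\widehat{c}_m/m!$, and for \eqref{chen2} your generating-function derivation of $\binom{x-t}{n}=\sum_{m=0}^n(-1)^m\binom{x-m}{n-m}\binom{t}{m}$ is sound (the coefficient extraction from $(1+z)^{x}\bigl(1-\tfrac{z}{1+z}\bigr)^{t}=(1+z)^{x-t}$ is a legitimate formal-power-series argument, and the resulting identity is a polynomial identity in $x$ and $t$), after which termwise integration against $\int_0^1\binom{t}{m}\,dt=c_m/m!$ finishes the job. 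Your argument is shorter and self-contained, needing neither Proposition \ref{prop:1} nor the external results of \cite{chen}; what the paper's route buys is that it keeps the theorem inside the $\mathcal{A}_x(n,m)$ framework, which is then reused immediately afterwards (the recurrence \eqref{guo} for $\mathcal{A}_x(n,m)$ drives the proof of Theorem \ref{th:3}, and \eqref{seq1}--\eqref{seq2} reinterpret the theorem as an expansion in the basis $\{\mathcal{A}_x(n,m)\}_{m=0}^n$), so the structural connection to \cite{chen} is part of the point there, not just a tool.
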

\begin{proof}
We prove only \eqref{chen1}, but~the proof of \eqref{chen2} is similar. For~this, consider the number sequence $\mathcal{A}_x(n,m)$ defined by
\begin{equation}\label{guodf}
\mathcal{A}_{x}(n,m) = \frac{n!}{m!}\binom{x +n-1}{n-m}, \quad 0\leq m \leq n,
\end{equation}
for an indeterminate $x$. According to~\cite{chen} (top of p.\ 2), this sequence fulfills the summation~formula
\begin{equation*}
\sum_{m=k}^{n} \binom{m}{k} \genfrac{[}{]}{0pt}{}{n}{m} x^{m-k}
= \sum_{m=k}^{n} (-1)^{m-k} \genfrac{[}{]}{0pt}{}{m}{k} \mathcal{A}_{x}(n,m).
\end{equation*}
{Also} note that left-hand side is equal to $\genfrac{[}{]}{0pt}{}{n}{k}_{x}$ (cf.\ Equation~\eqref{gs1}). Thus, using \eqref{guodf}, we have
\begin{equation}\label{alpha1}
\genfrac{[}{]}{0pt}{}{n}{k}_{x} = n! \sum_{m=k}^{n} \frac{(-1)^{m-k}}{m!}
\genfrac{[}{]}{0pt}{}{m}{k} \binom{x +n-1}{n-m}.
\end{equation}
{Substituting} this expression for $\genfrac{[}{]}{0pt}{}{n}{k}_{x}$ into \eqref{th11} yields
\begin{align*}
c_n(x) & = (-1)^n n! \sum_{k=0}^n \frac{(-1)^{k}}{k+1} \sum_{m=k}^{n} \frac{(-1)^{m-k}}{m!}
\genfrac{[}{]}{0pt}{}{m}{k} \binom{x+n-1}{n-m} \\[1mm]
& = (-1)^n n! \sum_{m=0}^n \frac{(-1)^m}{m!} \binom{x+n-1}{n-m} \sum_{k=0}^m \frac{1}{k+1}
\genfrac{[}{]}{0pt}{}{m}{k},
\end{align*}
which can be put in the form of Equation~\eqref{chen1} by noticing that $\widehat{c}_m = (-1)^m \sum_{k=0}^m \frac{1}{k+1}\genfrac{[}{]}{0pt}{}{m}{k}$.
\end{proof}

\begin{remark}
By taking $x=0$ or $x=1$ in \eqref{chen1} and \eqref{chen2} we get
\begin{align*}
\widehat{c}_n & = (-1)^n n! \sum_{m=0}^n \binom{n}{m} \frac{\widehat{c}_m}{m!}, \quad n \geq 0, \\
c_n & = (-1)^n n! \sum_{m=1}^n \binom{n-1}{m-1} \frac{\widehat{c}_m}{m!}, \quad n \geq 1, \\
\widehat{c}_n & = (-1)^n n! \sum_{m=1}^n \binom{n-1}{m-1} \frac{c_m}{m!}, \quad n \geq 1, \\
c_n & = (-1)^n n! \sum_{m=2}^n \binom{n-2}{m-2} \frac{c_m}{m!}, \quad n \geq 2.
\end{align*}
{The first} three identities above are already known (see, e.g.,~\cite{merlini} (Theorem 2.7) and generalized to poly-Cauchy numbers in~\cite{komatsu} (Theorem 7)); the last one, however, seems to be new.
\end{remark}

It should be noted that the symmetric transformation formula in~\cite{chen} (Proposition 10) entails the following alternative formula for the GSN of the first kind:
\begin{equation}\label{alpha2}
\genfrac{[}{]}{0pt}{}{n}{k}_{x} = \sum_{m=k}^{n} \binom{n}{m}
\genfrac{[}{]}{0pt}{}{m}{k} \binom{x +n-m-1}{n-m}.
\end{equation}
{Consequently,} employing \eqref{alpha2} instead of \eqref{alpha1} in the proof of Theorem \ref{th:2} produces the following variant of Equations~\eqref{chen1} and \eqref{chen2}:
\begin{equation}\label{symm1}
c_n(x) = (-1)^n n! \sum_{m=0}^n \frac{(-1)^m c_m}{m!} \binom{x+n-m-1}{n-m},
\end{equation}
and
\begin{equation}\label{symm2}
\widehat{c}_n(x) = n! \sum_{m=0}^n \frac{\widehat{c}_m}{m!} \binom{x}{n-m},
\end{equation}
respectively.

We now make the following~observations:
\begin{itemize}
\item
Putting $x=1$ in \eqref{symm1} and \eqref{symm2} gives (\cite{merlini}, Equations~(2.1) and (2.2))
\begin{equation*}
\widehat{c}_n = (-1)^n n! \sum_{m=0}^n \frac{(-1)^m c_m}{m!} \!\quad (n \geq 0) \quad\text{and} \!\quad
c_n = \widehat{c}_n + n \widehat{c}_{n-1} \quad (n \geq 1).
\end{equation*}

\item
Integrating \eqref{symm2} from $0$ to $1$ yields
\begin{equation*}
\int_0^1 \widehat{c}_n(x) dx =  n! \sum_{m=0}^n \frac{\widehat{c}_m}{m!} \int_0^1 \binom{x}{n-m} dx
=  \sum_{m=0}^n \binom{n}{m} \widehat{c}_m c_{n-m}.
\end{equation*}
So, recalling \eqref{int1}, it follows that
\begin{equation*}
\sum_{m=0}^n \binom{n}{m} \widehat{c}_m c_{n-m} = \sum_{m=0}^n \binom{n}{m} c_m \widehat{c}_{n-m} = (1-n)c_n,
\end{equation*}
in accordance with~\cite{merlini} (Equation~(2.3)) (see also~\cite{komatsu7}, Equation~(101)).

\item
Furthermore, it can be shown that
\begin{align*}
c_n(2) & = (-1)^n n! \sum_{m=0}^n \frac{(-1)^m \widehat{c}_m}{m!}, \\
& = (-1)^n n! \sum_{m=0}^n \binom{n+1}{m+1} \frac{\widehat{c}_m}{m!}, \\
& = (-1)^n n! \sum_{m=0}^n \binom{n}{m} \frac{c_m}{m!}.
\end{align*}
\end{itemize}

The next theorem gives a new recurrence formula for Cauchy polynomials.
\begin{theorem}\label{th:3}
For integers $n \geq 0$ and for arbitrary $x$, we have
\begin{equation}
c_{n+1}(x) = -(n+x)c_n(x) +(-1)^{n+1} n! \sum_{m=0}^n \frac{\widehat{c}_{m+1}}{m!}
\binom{x+n}{n-m}, \label{chen3}
\end{equation}
and
\begin{equation}
\widehat{c}_{n+1}(x) = (x-n)\widehat{c}_n(x) - n! \sum_{m=0}^n \frac{(-1)^m \, c_{m+1}}{m!}
\binom{x-m-1}{n-m}. \label{chen4}
\end{equation}
\end{theorem}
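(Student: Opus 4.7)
The plan is to derive both recurrences directly from Theorem~\ref{th:2}, using a single binomial manipulation together with careful index bookkeeping. The key algebraic fact I would invoke is that, for any indeterminate $x$ and integers $0\le m\le n$,
\begin{equation*}
(x+n)\binom{x+n-1}{n-m} = (n-m+1)\binom{x+n}{n-m+1},
\qquad
(x-n)\binom{x-m}{n-m} = (n-m+1)\binom{x-m}{n-m+1},
\end{equation*}
both of which are immediate by comparing rising or falling factorial expansions.

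For \eqref{chen3} I would write out $c_{n+1}(x)$ using \eqref{chen1} with $n$ replaced by $n+1$, and simultaneously rewrite $(n+x)c_n(x)$ by multiplying \eqref{chen1} through by $(n+x)$ and absorbing the factor into the binomial via the first identity above. After this step both sums are expressed in terms of the binomials $\binom{x+n}{n+1-k}$ for $k=0,\dots,n+1$, so I can add them termwise. A short check shows that the coefficient of $\binom{x+n}{n+1-k}$ in $c_{n+1}(x)+(n+x)c_n(x)$ equals $(-1)^{n+1}\bigl[(n+1)!-n!(n-k+1)\bigr]\widehat{c}_k/k!=(-1)^{n+1}n!\widehat{c}_k/(k-1)!$ for $1\le k\le n$, vanishes identically at $k=0$ (since $(n+1)!=n!(n+1)$), and equals $(-1)^{n+1}\widehat{c}_{n+1}$ at $k=n+1$. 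The reindexing $m=k-1$ then rearranges the surviving terms into precisely the right-hand side of \eqref{chen3}.

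For \eqref{chen4} the strategy is entirely parallel, starting from \eqref{chen2}. I would expand $\widehat{c}_{n+1}(x)$ by \eqref{chen2} with $n$ replaced by $n+1$, and use the second binomial identity above to rewrite $(x-n)\widehat{c}_n(x)$ so that each summand carries $\binom{x-k}{n+1-k}$. The difference $\widehat{c}_{n+1}(x)-(x-n)\widehat{c}_n(x)$ then collapses in the same way as before: the $k=0$ coefficient cancels identically, the $k=n+1$ term yields the isolated contribution $(-1)^{n+1}c_{n+1}$, and for $1\le k\le n$ the coefficient reduces to $(-1)^k c_k n!/(k-1)!$. The substitution $m=k-1$ delivers exactly the stated sum.

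The main obstacle is essentially sign and factorial bookkeeping: tracking $(-1)^{n+1}$ against $(-1)^k$, separating the boundary terms $k=0$ and $k=n+1$ from the interior range, and verifying that the isolated contribution at the upper boundary is precisely what is required to extend $\sum_{m=0}^{n-1}$ to $\sum_{m=0}^{n}$ after reindexing. No tools beyond Theorem~\ref{th:2} and the two binomial identities displayed above are needed.
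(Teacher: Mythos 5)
Your proof is correct, but it follows a genuinely different route from the paper's. The paper does not manipulate the closed formulas \eqref{chen1} and \eqref{chen2} directly: it re-expands $c_{n+1}(x)$ (resp.\ $\widehat{c}_{n+1}(x)$) as the double sum $\sum_{k=0}^{n+1}\frac{(-1)^{n+1-k}}{k+1}\sum_{m=k}^{n+1}(-1)^{m-k}\genfrac{[}{]}{0pt}{}{m}{k}\mathcal{A}_{x}(n+1,m)$ arising in the proof of Theorem~\ref{th:2}, and then invokes the Chen--Guo recurrence \eqref{guo}, namely $\mathcal{A}_{x}(n+1,m)=\mathcal{A}_{x}(n,m-1)+(n+m+x)\mathcal{A}_{x}(n,m)$ (an external result, Proposition 1 of the cited paper), to split this into a piece that reassembles into $-(n+x)c_n(x)$ and a piece that, via \eqref{chen1}, becomes the remaining sum. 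You bypass both the $\mathcal{A}$-basis and the external recurrence: starting from \eqref{chen1} and \eqref{chen2} alone, you absorb the factor $(n+x)$ (resp.\ $(x-n)$) into the binomial coefficient through the elementary identities $(x+n)\binom{x+n-1}{n-m}=(n-m+1)\binom{x+n}{n-m+1}$ and $(x-n)\binom{x-m}{n-m}=(n-m+1)\binom{x-m}{n-m+1}$, and then compare coefficients in the common binomial basis. Your bookkeeping is right: $(n+1)!-n!(n-k+1)=n!\,k$, which kills the $k=0$ term, gives $(-1)^{n+1}n!\,\widehat{c}_k/(k-1)!$ (resp.\ $(-1)^k c_k\, n!/(k-1)!$) for $1\le k\le n$, and the boundary term at $k=n+1$ is exactly the $m=n$ summand after the shift $m=k-1$. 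What each approach buys: yours is more elementary and fully self-contained, requiring nothing beyond Theorem~\ref{th:2} and binomial algebra, and it makes the cancellation mechanism completely explicit; the paper's version stays inside the $\mathcal{A}_x(n,m)$ framework of Chen and Guo, which it immediately reuses (with Proposition 3 of that paper) to derive the companion recurrences for $c_n(x)$ and $\widehat{c}_n(x)$ stated right after Theorem~\ref{th:3}, so the citation does double duty there.
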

\begin{proof}
To obtain \eqref{chen3}, write $c_{n+1}(x)$ in the form
\begin{equation*}
c_{n+1}(x) = \sum_{k=0}^{n+1} \frac{(-1)^{n+1-k}}{k+1} \sum_{m=k}^{n+1} (-1)^{m-k}
\genfrac{[}{]}{0pt}{}{m}{k} \mathcal{A}_{x}(n+1,m),
\end{equation*}
and then use the relation~\cite{chen} (Proposition 1)
\begin{equation}\label{guo}
\mathcal{A}_{x}(n+1,m) = \mathcal{A}_{x}(n,m-1) + (n+m+x) \mathcal{A}_{x}(n,m),
\end{equation}
together with \eqref{chen1}. Similarly, to~obtain \eqref{chen4}, write $\widehat{c}_{n+1}(x)$ in the form
\begin{equation*}
\widehat{c}_{n+1}(x) = (-1)^{n+1} \sum_{k=0}^{n+1} \frac{1}{k+1} \sum_{m=k}^{n+1} (-1)^{m-k}
\genfrac{[}{]}{0pt}{}{m}{k} \mathcal{A}_{-x}(n+1,m),
\end{equation*}
and then use the relation \eqref{guo} (with $x$ replaced by $-x$) together with \eqref{chen2}.
\end{proof}

In a similar manner, using the relation~\cite{chen} (Proposition 3)
\begin{equation*}
m\mathcal{A}_{x}(n,m) = m n\mathcal{A}_{x}(n-1,m) + n \mathcal{A}_{x}(n-1,m-1),
\end{equation*}
it can be shown that, for~$n \geq 1$,
\begin{equation*}
c_{n}(x) = -n c_{n-1}(x) + (-1)^{n} n! \sum_{m=0}^{n} \frac{\widehat{c}_{m}}{m!}
\binom{x+n-2}{n-m},
\end{equation*}
and
\begin{equation*}
\widehat{c}_{n}(x) = -n \widehat{c}_{n-1}(x) + n! \sum_{m=0}^{n} \frac{(-1)^m \,c_{m}}{m!}
\binom{x+1-m}{n-m}.
\end{equation*}

\begin{remark}
Combining the last two equations with Theorem \ref{th:2} yields the difference equations ($n \geq 1$)
\begin{equation}\label{diff1}
c_{n}(x+1) - c_{n}(x) = -n c_{n-1}(x+1),
\end{equation}
and
\begin{equation}\label{diff2}
\widehat{c}_{n}(x+1) - \widehat{c}_{n}(x) = n \widehat{c}_{n-1}(x).
\end{equation}
{In addition,} by~setting $x \to -x$ in either of the two equations above, we immediately obtain
\begin{equation*}
c_n(x) = \widehat{c}_n(-x) + n \widehat{c}_{n-1}(-x),
\end{equation*}
which reduces to the known relation $c_n = \widehat{c}_n + n \widehat{c}_{n-1}$ when $x=0$.
\end{remark}

To conclude this subsection, it is pertinent to note that the Formulas \eqref{chen1} and \eqref{chen2} in Theorem~\ref{th:2} can be written in the form:
\begin{equation}\label{seq1}
c_n(x) = (-1)^n \sum_{m=0}^n \widehat{c}_m \mathcal{A}_x(n,m),
\end{equation}
and
\begin{equation}\label{seq2}
\widehat{c}_n(-x) = (-1)^n \sum_{m=0}^n c_m \mathcal{A}_x(n,m),
\end{equation}
expressing $c_n(x)$ and $\widehat{c}_n(-x)$ in the basis $\{ \mathcal{A}_x(n,m) \}_{m=0}^n$ of the space of polynomials in $x$ of degree not greater than $n$, defined by \eqref{guodf}. We point out that \eqref{seq1} and \eqref{seq2} are equivalent, respectively, to~Formulas (10) and (9) in Proposition 20 of~\cite{chen}. Moreover, differentiating \eqref{seq1} and \eqref{seq2} with respect to $x$ gives rise to the formulas in Proposition 21 of~\cite{chen}.

\subsection{Cauchy Polynomials, Central Factorial Numbers and Euler~Polynomials}

The following theorem provides a formula for $c_{2n}(x)$ and $\widehat{c}_{2n}(x)$ in terms of central factorial numbers and Bernoulli numbers, as~well as a formula for $c_{2n+1}(x)$ and $\widehat{c}_{2n+1}(x)$ in terms of central factorial numbers and Euler polynomials. A~through account of the central factorial numbers can be found in~\cite{butzer}. For~Bernoulli and Euler polynomials, see, e.g.,~\cite{euler1}.
\begin{theorem}\label{th:4}
For integers $n \geq 1$ and for arbitrary $x$, we have
\begin{align*}
c_{2n}(x) & = n \sum_{m=1}^n \frac{u(n,m)}{m} \big( (x+n-1)^{2m} - B_{2m} \big), \\
\widehat{c}_{2n}(x) & = n \sum_{m=1}^n \frac{u(n,m)}{m} \big( (x-n)^{2m} - B_{2m} \big), \\
c_{2n+1}(x) & = -(2n+1) \sum_{m=1}^n \frac{u(n,m)}{2m+1} E_{2m+1}(x+n), \\
\widehat{c}_{2n+1}(x) & = (2n+1) \sum_{m=1}^n \frac{u(n,m)}{2m+1} E_{2m+1}(x -n),
\end{align*}
where $u(n,m) = t(2n,2m)$ are the (signed) central factorial numbers with even indices of the first kind and $E_m(x)$ are the Euler polynomials.
\end{theorem}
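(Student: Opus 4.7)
The plan is to derive (i) and (iii) from master computations; the companion formulas (ii) and (iv) follow from $\widehat{c}_n(x)=c_n(1-x)$ (recorded in the introduction) together with $E_{2m+1}(1-t)=-E_{2m+1}(t)$. Starting from $c_n(x)=\int_{-x}^{1-x}u(u-1)\cdots(u-n+1)\,du$, the substitution $w=n-u$ rewrites
\begin{equation*}
c_{2n}(x)=\int_{z}^{z+1}P(w)\,dw, \qquad c_{2n+1}(x)=-\int_{z}^{z+1}Q(w)\,dw, \qquad z:=x+n-1,
\end{equation*}
where $P(w):=(w+n-1)(w+n-2)\cdots(w-n)=w(w-n)\prod_{k=1}^{n-1}(w^2-k^2)$ and $Q(w):=(w+n)(w+n-1)\cdots(w-n)=w\prod_{k=1}^{n}(w^2-k^2)$. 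Using the defining expansion $w^2\prod_{k=1}^{n-1}(w^2-k^2)=\sum_{m=1}^n u(n,m)\,w^{2m}$, the first polynomial satisfies $P(w)=\sum_m u(n,m)[w^{2m}-n\,w^{2m-1}]$; note also that $Q$ is odd.

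The core of the argument for (i) is the identity
\begin{equation*}
P(w)=n\sum_{m=1}^{n}\frac{u(n,m)}{m}\left[B_{2m}(w)-B_{2m}\right].
\end{equation*}
I would prove this by matching forward differences $\Delta f(w):=f(w+1)-f(w)$ on both sides and then the constants at $w=0$. Factoring the $2n-1$ factors shared by $P(w+1)$ and $P(w)$ gives $\Delta P(w)=2n\,w\prod_{k=1}^{n-1}(w^2-k^2)=2n\sum_m u(n,m)\,w^{2m-1}$, while the Appell relation $\Delta B_{2m}(w)=2m\,w^{2m-1}$ yields the same expression when applied to the right-hand side above. Both sides vanish at $w=0$ since $P(0)=0$ and $B_{2m}(0)-B_{2m}=0$, so the identity follows. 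Integrating it over $[z,z+1]$ and invoking the standard formula $\int_z^{z+1}B_{2m}(w)\,dw=z^{2m}$ yields (i).

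For (iii) the same strategy works with the Euler operator $\Sigma f(w):=f(w)+f(w+1)$ in place of $\Delta$, since $\Sigma E_n(w)=2w^n$. Two direct factor-collecting computations give
\begin{equation*}
Q(w)-Q(w-1)=(2n+1)P(w), \qquad P(w)+P(w+1)=2\sum_{m=1}^{n}u(n,m)\,w^{2m}.
\end{equation*}
The second of these, paired with $\Sigma E_{2m}(w)=2w^{2m}$, forces $\sum_m u(n,m)E_{2m}(y)-P(y)$ to be annihilated by $\Sigma$; any such polynomial must vanish (it is periodic of period two, hence constant, hence zero), so $P(y)=\sum_m u(n,m)E_{2m}(y)$. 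Integrating from $1/2$ to $y$, with $E_{2m+1}(1/2)=0$ and $E_{2m+1}'=(2m+1)E_{2m}$, gives $\sum_m \frac{u(n,m)}{2m+1}E_{2m+1}(y)=\int_{1/2}^{y}P(t)\,dt$. The first boxed identity then rewrites the right side as $\frac{1}{2n+1}\int_{1/2}^{y}[Q(t)-Q(t-1)]\,dt$, and a short change of variable, in which $\int_{-1/2}^{1/2}Q=0$ by oddness, collapses this to $\frac{1}{2n+1}\int_{y-1}^{y}Q(t)\,dt$. Substituting $y=x+n$ into the integral representation of $c_{2n+1}(x)$ recovers (iii).

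The main obstacle is spotting the Bernoulli expansion of $P(w)$ displayed above; once it is in view, the Euler-polynomial analogue is proved along the same lines, and the remainder is routine integration.
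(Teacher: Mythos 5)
Your proof is correct, and it takes a genuinely different route from the paper's. I checked the main steps: the substitution $w=n-u$ giving $c_{2n}(x)=\int_z^{z+1}P(w)\,dw$ and $c_{2n+1}(x)=-\int_z^{z+1}Q(w)\,dw$; the factor-collecting computations $\Delta P(w)=2n\sum_m u(n,m)w^{2m-1}$, $Q(w)-Q(w-1)=(2n+1)P(w)$, and $P(w)+P(w+1)=2\sum_m u(n,m)w^{2m}$; the resulting expansions $P(w)=n\sum_m\frac{u(n,m)}{m}\big(B_{2m}(w)-B_{2m}\big)$ and $P(y)=\sum_m u(n,m)E_{2m}(y)$ (your argument that a polynomial annihilated by $\Sigma$ is $2$-periodic, hence constant, hence zero, is sound); and the reduction of (ii) and (iv) to (i) and (iii) via $\widehat{c}_n(x)=c_n(1-x)$, which is legitimate since that relation is recorded in the introduction. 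The paper instead proves only the formula for $c_{2n+1}(x)$ (declaring the others similar) and leans on external results: it starts from the identity $(2n)!\binom{x+n+1}{2n+1}=\sum_{m=1}^n u(n,m)\,2^{2m+1}S_{2m}(x/2)$ imported from \cite{cere3}, shifts $x\to x-s-n-1$, integrates over the unit interval so that \eqref{defcaup} produces the Cauchy polynomial, converts the power-sum polynomials into Bernoulli polynomials through \eqref{defb}, and then obtains Euler polynomials from the Moll--Vignat formula $\int_a^{a+1/2}B_n(t)\,dt=E_n(2a)/2^{n+1}$ of \cite{moll}, finishing with $E_{2m+1}(1-u)=-E_{2m+1}(u)$. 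What your approach buys is self-containment and transparency: everything reduces to the central factorial expansion $w^2\prod_{k=1}^{n-1}(w^2-k^2)=\sum_m u(n,m)w^{2m}$ together with the elementary difference relations $\Delta B_n(w)=nw^{n-1}$ and $E_n(w)+E_n(w+1)=2w^n$, and all four identities are delivered explicitly. What the paper's approach buys is brevity, since the hard polynomial identities are delegated to cited results, and it displays the connection between these Cauchy-polynomial formulas and the author's earlier power-sum identities, which fits the paper's broader theme.
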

\begin{proof}
We prove only the third identity. For~this, we start with the relation (cf.\ \cite{cere3}, p.\ 7)
\begin{equation*}
(2n)! \binom{x+n+1}{2n+1} = \sum_{m=1}^n u(n,m) 2^{2m+1} S_{2m} \Big(\frac{x}{2} \Big), \quad n \geq 1.
\end{equation*}
{Setting} $x \to x -s -n-1$ and integrating over $x$ from $0$ to $1$ on both sides gives
\begin{equation}\label{cf1}
\frac{c_{2n+1}(s)}{2n+1} = \sum_{m=1}^n \frac{u(n,m)}{2m+1} 2^{2m+2}
\int_{\frac{-s-n+1}{2}}^{\frac{-s-n+2}{2}} B_{2m+1}(x) dx,
\end{equation}
where, for~$m \geq 1$,
\begin{align*}
\int_{0}^{1} S_{2m}\Big(\frac{1}{2}(x -s-n-1) \Big) dx
& = \frac{1}{2m+1} \int_{0}^{1} B_{2m+1} \Big(\frac{1}{2}(x -s-n+1) \Big) dx \\
& = \frac{2}{2m+1} \int_{\frac{-s-n+1}{2}}^{\frac{-s-n+2}{2}} B_{2m+1}(x) dx.
\end{align*}
{Next,} employing the integration formula~\cite{moll} (Equation~(2.5))
\begin{equation*}
\int_{a}^{a +\frac{1}{2}} B_n(x) dx = \frac{1}{2^{n+1}} E_n(2a),  \quad n \geq 0,
\end{equation*}
we obtain
\begin{equation*}
\int_{\frac{-s-n+1}{2}}^{\frac{-s-n+2}{2}} B_{2m+1}(x) dx = \frac{E_{2m+1}(1-s-n)}{2^{2m+2}}
= -\frac{E_{2m+1}(s+n)}{2^{2m+2}}.
\end{equation*}
{Thus,} the~stated identity follows upon replacing the integral in \eqref{cf1} by $-\frac{E_{2m+1}(s+n)}{2^{2m+2}}$ and writing $x$ instead of $s$.
\end{proof}

\subsection{Cauchy Polynomials and $r$-Whitney~Numbers}

$r$-Whitney numbers of the first and second kind, $w_{m,r}(n,l)$ and $W_{m,r}(n,l)$, $0 \leq l \leq n$, were introduced by Mez\H{o} \cite{mezo2} as a new class of numbers generalizing Whitney and $r$-Stirling numbers. They can be defined as connection constants in the polynomial identities~\cite{cheon3} (Equations~(1) and (2))
\begin{equation*}
m^n (x)_n = \sum_{l=0}^n (-1)^{n-l} w_{m,r}(n,l) (mx +r)^l,
\end{equation*}
and
\begin{equation*}
(mx +r)^n = \sum_{l=0}^n m^l W_{m,r}(n,l) (x)_l,
\end{equation*}
where $(x)_n$ denotes the falling factorial $(x)_n = x(x-1)\ldots (x-n+1)$ for $n \geq 1$ with $(x)_0 =1$.

Alternatively, $r$-Whitney numbers can be expressed in terms of ordinary Stirling numbers as follows (\cite{cheon3} (pp.~2343--2344)):
\begin{equation*}
w_{m,r}(n,l) = \sum_{j=l}^n \binom{j}{l} m^{n-j} r^{j-l} \genfrac{[}{]}{0pt}{}{n}{j}
\quad\, \text{and}\quad W_{m,r}(n,l) = \sum_{j=l}^n \binom{n}{j} m^{j-l} r^{n-j} \genfrac{\{}{\}}{0pt}{}{j}{l},
\end{equation*}
or, equivalently,
\begin{equation*}
w_{m,r}(n,l) = m^{n-l} \genfrac{[}{]}{0pt}{}{n}{l}_{\frac{r}{m}} \, \quad \text{and} \quad
W_{m,r}(n,l) = m^{n-l} \genfrac{\{}{\}}{0pt}{}{n}{l}_{\frac{r}{m}},
\end{equation*}
provided that $m \neq 0$.

Therefore, assuming that $r$ and $m$ are arbitrary integers with $m \neq 0$, we may combine the above formula for $w_{m,r}(n,l)$ with Proposition \ref{prop:1} to obtain
\begin{equation}\label{whit1}
c_n \left( \frac{r}{m} \right) = \sum_{l=0}^n \frac{(-1)^{n-l}}{l+1} \, \frac{w_{m,r}(n,l)}{m^{n-l}},
\end{equation}
and
\begin{equation}\label{whit2}
\widehat{c}_n \left( -\frac{r}{m} \right) = (-1)^n  \sum_{l=0}^n \frac{1}{l+1} \, \frac{w_{m,r}(n,l)}{m^{n-l}},
\end{equation}
which allow us to evaluate $c_n(x)$ [resp.\ $\widehat{c}_n(x)$] at the rational number $\frac{r}{m}$ [resp.\ $-\frac{r}{m}$], once we known the values of $w_{m,r}(n,l)$, $l =0,1,\ldots,n$.

We note that \eqref{whit1} and \eqref{whit2} can be reversed to
\begin{equation*}
\sum_{l=0}^n m^l W_{m,r}(n,l) \, c_l \left( \frac{r}{m} \right) = \frac{m^n}{n+1},
\end{equation*}
and
\begin{equation*}
\sum_{l=0}^n m^l W_{m,r}(n,l) \, \widehat{c}_l \left( -\frac{r}{m} \right) = (-1)^n \frac{m^n}{n+1},
\end{equation*}
respectively.

\begin{remark}
In~\cite{shiha}, Shiha obtained explicit formulas for computing Cauchy polynomials with a $q$ parameter in terms of $r$-Whitney numbers of the first kind.
\end{remark}

\section{Derivative Formulas for Cauchy~Polynomials}\label{sec:3}

In this section, we establish some noteworthy formulas for higher-order derivatives of Cauchy polynomials. To this end, we first note that the generalized Bernoulli polynomials defined by \eqref{gfgen} can be expressed as derivatives of binomial coefficients as follows (see, e.g.,~\cite{gould} (Equation~(13.2))):
\begin{equation*}
B_{\nu}^{(m+1)}(x+1) = \nu! \, \frac{d^{m-\nu}}{d x^{m-\nu}} \binom{x}{m},   \quad m \geq \nu.
\end{equation*}
{Combining} this expression with \eqref{rep1} results in
\begin{equation}\label{gs1ber}
\genfrac{[}{]}{0pt}{}{m}{i}_x = (-1)^{m-i} \binom{m}{i} B_{m-i}^{(m+1)}(1-x).
\end{equation}
{Hence,} by~using \eqref{gs1ber} in \eqref{th11} and \eqref{th12}, we obtain the following formulas for Cauchy polynomials in terms of generalized Bernoulli polynomials:
\begin{equation}
c_n(x) = \sum_{m=0}^n \binom{n}{m} \frac{B_{m}^{(n+1)}(1-x)}{n+1-m}, \label{gbp1}
\end{equation}
and
\begin{equation}
\widehat{c}_n(x) = \sum_{m=0}^n (-1)^{n-m} \binom{n}{m} \frac{B_{m}^{(n+1)}(x+1)}{n+1-m}. \label{gbp2}
\end{equation}

On the other hand, the~generalized Bernoulli polynomials satisfy, as~an Appell sequence, the~following well-known rule for the derivatives with respect to $x$: $\frac{d^i B_m^{(n)}(x)}{dx^i} = i! \binom{m}{i} B_{m-i}^{(n)}(x)$ (cf.\ \cite{kellner}, Equation~(1.1)). Then, from~\eqref{gbp1} and \eqref{gbp2}, we quickly obtain the following formulas for the $i$-th derivative of Cauchy polynomials of both kinds.
\begin{theorem}\label{th:5}
For integers $i \geq 0$, the~$i$-th derivative of Cauchy polynomials can be expressed by
\begin{equation*}
\frac{d^i c_n(x)}{d x^i} = (-1)^i i! \sum_{m=i}^n \binom{n}{m} \binom{m}{i}
\frac{B_{m-i}^{(n+1)}(1-x)}{n+1-m},
\end{equation*}
and
\begin{equation*}
\frac{d^i \widehat{c}_n(x)}{d x^i} = i! \sum_{m=i}^n (-1)^{n-m} \binom{n}{m} \binom{m}{i}
\frac{B_{m-i}^{(n+1)}(x+1)}{n+1-m},
\end{equation*}
\end{theorem}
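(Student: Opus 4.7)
The plan is to prove both identities by direct differentiation of the closed-form expressions \eqref{gbp1} and \eqref{gbp2} for the Cauchy polynomials. These representations are established in the excerpt just prior to the theorem, so the result reduces to a routine application of the Appell-sequence derivative rule $\frac{d^i B_m^{(n)}(x)}{dx^i} = i! \binom{m}{i} B_{m-i}^{(n)}(x)$, combined with the chain rule.

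For the first identity, I would start from
\[
c_n(x) = \sum_{m=0}^n \binom{n}{m} \frac{B_{m}^{(n+1)}(1-x)}{n+1-m},
\]
differentiate $i$ times term-by-term, and apply the chain rule to the inner function $y = 1-x$. Each differentiation with respect to $x$ introduces a factor of $-1$, so after $i$ iterations we accumulate an overall factor of $(-1)^i$. Using the Appell property then yields
\[
\frac{d^i}{dx^i} B_{m}^{(n+1)}(1-x) = (-1)^i \, i! \binom{m}{i} B_{m-i}^{(n+1)}(1-x).
\]
Since $\binom{m}{i}$ vanishes for $m < i$, the summation range collapses to $m = i, i+1, \ldots, n$, which is precisely the claimed formula for $\frac{d^i c_n(x)}{dx^i}$.

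The argument for $\widehat{c}_n(x)$ proceeds in the same manner, starting from \eqref{gbp2}. Here the argument of the generalized Bernoulli polynomial is $x+1$, so the chain rule contributes only a factor of $1$ and no sign arises from differentiation; the prefactor $(-1)^{n-m}$ is inherited unchanged from the original expansion, giving the stated formula after the same index truncation at $m=i$.

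Because all of the substantive content has been absorbed into the derivation of \eqref{gbp1}, \eqref{gbp2}, and the Appell identity, there is no real obstacle to overcome: the proof is a straightforward bookkeeping exercise. The only minor point deserving attention is the chain-rule sign in the first case, which accounts for the asymmetry between the two formulas---namely the factor $(-1)^i$ appearing in the derivative of $c_n(x)$ but not in that of $\widehat{c}_n(x)$.
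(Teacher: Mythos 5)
Your proof is correct and follows essentially the same route as the paper: the paper also obtains the theorem by differentiating \eqref{gbp1} and \eqref{gbp2} term by term via the Appell-sequence rule $\frac{d^i}{dx^i} B_m^{(n+1)}(x) = i!\binom{m}{i}B_{m-i}^{(n+1)}(x)$, with the chain-rule factor $(-1)^i$ coming from the argument $1-x$ in the first case and the truncation of the sum at $m=i$ coming from the vanishing of $\binom{m}{i}$ for $m<i$.
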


The next theorem provides an alternative formula for the $i$-th derivative of Cauchy polynomials in terms of the respective Cauchy numbers and the GSN of the first kind.
\begin{theorem}\label{th:6}
For integers $i \geq 0$, the~$i$-th derivative of Cauchy polynomials can be expressed by
\begin{equation}
\frac{d^i c_n(x)}{d x^i} = i! \sum_{m=i}^n (-1)^m c_{n-m} \binom{n}{m} \genfrac{[}{]}{0pt}{}{m}{i}_x,
\label{der1}
\end{equation}
and
\begin{equation}
\frac{d^i \widehat{c}_n(x)}{d x^i} = (-1)^i i! \sum_{m=i}^n (-1)^m \, \widehat{c}_{n-m} \binom{n}{m}
\genfrac{[}{]}{0pt}{}{m}{i}_{-x}. \label{der2}
\end{equation}
\end{theorem}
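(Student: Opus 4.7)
The plan is to obtain both identities from a convolution-type expansion of $c_n(x)$ and $\widehat{c}_n(x)$ in terms of Cauchy numbers, followed by term-by-term differentiation using the representation \eqref{rep1}.

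For \eqref{der1}, I would factor the generating function \eqref{gf1} as
$$\frac{t}{(1+t)^x \ln(1+t)} = (1+t)^{-x} \cdot \frac{t}{\ln(1+t)},$$
expand $(1+t)^{-x} = \sum_{k \geq 0} (-1)^k \binom{x+k-1}{k} t^k$, and take the Cauchy product with the exponential generating function $\sum_{n \geq 0} c_n\, t^n/n!$ of the Cauchy numbers of the first kind. Comparing coefficients of $t^n/n!$ yields the convolution identity
$$c_n(x) = \sum_{k=0}^n (-1)^k \binom{n}{k}\, k!\binom{x+k-1}{k}\, c_{n-k}.$$

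The key observation is that, by \eqref{rep1},
$$\frac{d^i}{dx^i}\Bigl[k!\binom{x+k-1}{k}\Bigr] = i!\,\genfrac{[}{]}{0pt}{}{k}{i}_x,$$
so differentiating the convolution identity $i$ times under the sum produces \eqref{der1}; the terms with $k<i$ drop out because $\binom{x+k-1}{k}$ is a polynomial of degree $k$ in $x$, and renaming $k$ to $m$ gives the stated indexing.

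For \eqref{der2}, I would proceed analogously, factoring \eqref{gf2} as $(1+t)^x \cdot \frac{t}{(1+t)\ln(1+t)}$ to obtain
$$\widehat{c}_n(x) = \sum_{k=0}^n \binom{n}{k}\,(x)_k\, \widehat{c}_{n-k},$$
where $(x)_k = x(x-1)\cdots(x-k+1)$. The crucial rewriting is
$$(x)_k = (-1)^k\,\genfrac{[}{]}{0pt}{}{k}{0}_{-x},$$
obtained from the product representation of $\genfrac{[}{]}{0pt}{}{k}{0}_{-x}$. Differentiating $i$ times then gives, via the chain rule together with \eqref{rep1},
$$\frac{d^i}{dx^i}\genfrac{[}{]}{0pt}{}{k}{0}_{-x} = (-1)^i\, i!\,\genfrac{[}{]}{0pt}{}{k}{i}_{-x},$$
which combined with the convolution identity yields \eqref{der2}.

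The argument is mechanical once the right starting point is chosen; the main (mild) obstacle is careful bookkeeping of signs — those coming from $(1+t)^{-x}$ vs.\ $(1+t)^x$, from the identity $(x)_k = (-1)^k \genfrac{[}{]}{0pt}{}{k}{0}_{-x}$, and from the chain rule contribution $(-1)^i$ in the second formula.
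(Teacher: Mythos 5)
Your proposal is correct and takes essentially the same approach as the paper: the convolution expansions you derive by Cauchy products of generating functions are precisely the paper's Equations~\eqref{symm1} and \eqref{symm2} (reindexed as $c_n(x) = n!\sum_{m}\frac{(-1)^m c_{n-m}}{(n-m)!}\binom{x+m-1}{m}$ and $\widehat{c}_n(x) = n!\sum_{m}\frac{\widehat{c}_{n-m}}{(n-m)!}\binom{x}{m}$), and the term-by-term differentiation via \eqref{rep1}, including the vanishing of the terms with index below $i$ and the chain-rule sign $(-1)^i$ in the second formula, is exactly the paper's argument. The only cosmetic difference is that you rederive the input expansions from the generating functions \eqref{gf1} and \eqref{gf2}, whereas the paper simply cites \eqref{symm1} and \eqref{symm2}, which it had already established.
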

\begin{proof}
Equation~\eqref{der1} follows straightforwardly by putting \eqref{symm1} in the form
\begin{equation*}
c_n(x) = n! \sum_{m=0}^n \frac{(-1)^m c_{n-m}}{(n-m)!} \binom{x+m-1}{m},
\end{equation*}
and using the relation (cf.\ Equation~\eqref{rep1})
\begin{equation*}
\frac{d^i}{dx^i} \binom{x+m-1}{m} = \frac{i!}{m!} \genfrac{[}{]}{0pt}{}{m}{i}_x.
\end{equation*}
{Similarly,} Equation~\eqref{der2} can be obtained by putting \eqref{symm2} in the form
\begin{equation*}
\widehat{c}_n(x) = n! \sum_{m=0}^n \frac{\widehat{c}_{n-m}}{(n-m)!} \binom{x}{m},
\end{equation*}
and using the relation (cf.\ Equation~\eqref{rep1})
\begin{equation*}
\frac{d^i}{dx^i} \binom{x}{m} = (-1)^{m-i} \frac{i!}{m!} \genfrac{[}{]}{0pt}{}{m}{i}_{-x}.\qedhere
\end{equation*}
\end{proof}

\begin{remark}
By virtue of \eqref{gs1ber}, the~above Formulas \eqref{der1} and \eqref{der2} can be rewritten as
\begin{equation*}
\frac{d^i c_n(x)}{d x^i} =  (-1)^i i! \sum_{m=i}^n c_{n-m} \binom{n}{m} \binom{m}{i}
B_{m-i}^{(m+1)}(1-x),
\end{equation*}
and
\begin{equation*}
\frac{d^i \widehat{c}_n(x)}{d x^i} = i! \sum_{m=i}^n \widehat{c}_{n-m} \binom{n}{m} \binom{m}{i}
B_{m-i}^{(m+1)}(x+1).
\end{equation*}
\end{remark}

We can also obtain derivative formulas for Cauchy polynomials by directly differentiating \eqref{exp1} and \eqref{exp2}. So, for~$i \geq 1$, this yields
\begin{equation}\label{der12}
\frac{d^i c_n(x)}{d x^i} = (-1)^n n (i-1)! \sum_{m=i}^n \binom{m-1}{i-1} \genfrac{[}{]}{0pt}{}{n-1}{m-1} x^{m-i} = (-1)^n n (i-1)! \genfrac{[}{]}{0pt}{}{n-1}{i-1}_x,
\end{equation}
and
\begin{equation}\label{der22}
\frac{d^i \widehat{c}_n(x)}{d x^i} = (-1)^n n (i-1)! \sum_{m=i}^n (-1)^m \binom{m-1}{i-1}
\genfrac{[}{]}{0pt}{}{n-1}{m-1} (x-1)^{m-i} = (-1)^{n+i} n (i-1)! \genfrac{[}{]}{0pt}{}{n-1}{i-1}_{1-x},
\end{equation}
respectively.

Furthermore, identifying the right-hand sides of \eqref{der1} and \eqref{der12}, we obtain
\begin{equation}\label{gder1}
\sum_{m=i}^n (-1)^{n-m} \, c_{n-m} \binom{n}{m} \genfrac{[}{]}{0pt}{}{m}{i}_x
= \frac{n}{i} \genfrac{[}{]}{0pt}{}{n-1}{i-1}_x, \quad i \geq 1.
\end{equation}
{In particular,} for~$(i,x) = (1,0), (2,0), (1,1)$, \eqref{gder1} leads to the following identities:
\begin{gather}
\frac{c_n}{n!} = \sum_{m=0}^{n-1} \frac{c_m}{m!} \, \frac{(-1)^{n+1-m}}{n+1-m},
\quad n \geq 1, \label{merlin} \\
\sum_{m=0}^{n} \frac{(-1)^{n-m}}{m+1} \, \frac{c_{n-m}}{(n-m)!} H_{m} = \frac{1}{2n},
\quad n \geq 1, \notag \\
\sum_{m=0}^{n} (-1)^{n-m} \, \frac{c_{n-m}}{(n-m)!} H_{m+1} = 1, \quad n \geq 0, \label{zhao}
\end{gather}
where \eqref{merlin} is Theorem 2.2 of~\cite{merlini} and \eqref{zhao} is the first identity in Theorem 5.1 of~\cite{zhao} (but note the small error there).

Likewise, by~equating the right-hand sides of \eqref{der2} and \eqref{der22}, and~making $x \to -x$, we~obtain
\begin{equation}\label{gder2}
\sum_{m=i}^n (-1)^{n-m} \, \widehat{c}_{n-m} \binom{n}{m} \genfrac{[}{]}{0pt}{}{m}{i}_{x}
= \frac{n}{i} \genfrac{[}{]}{0pt}{}{n-1}{i-1}_{x+1}, \quad i \geq 1.
\end{equation}
{In particular,} for~$(i,x) = (1,0), (2,0), (1,1)$, \eqref{gder2} leads to the following identities:
\begin{gather*}
\frac{\widehat{c}_n}{n!} = (-1)^n + \sum_{m=0}^{n-1} \frac{\widehat{c}_m}{m!} \,
\frac{(-1)^{n+1-m}}{n+1-m}, \quad n \geq 1, \notag \\
\sum_{m=0}^{n} \frac{(-1)^{n-m}}{m+1} \, \frac{\widehat{c}_{n-m}}{(n-m)!} H_{m} =
\frac{1}{2} H_n, \quad n \geq 0, \notag \\
\sum_{m=0}^{n} (-1)^{n-m} \, \frac{\widehat{c}_{n-m}}{(n-m)!} H_{m+1} = n+1,
\quad n \geq 0,
\end{gather*}
where the last equation is the second identity in Theorem 5.1 of~\cite{zhao}.

\section{A Connection with Hyperharmonic~Polynomials}\label{sec:4}

The hyperharmonic polynomials $H_n^{(x)}$ can be defined by means of the generating function~\cite{benjamin} (p.~2)
\begin{equation}\label{gfhp}
\sum_{n=0}^{\infty} H_n^{(x)} t^n = - \frac{\ln (1-t)}{(1-t)^x}  \quad (|t| < 1),
\end{equation}
with alternative representation~\cite{benjamin} (Theorem 1)
\begin{equation}\label{rep2}
H_n^{(x)} = \sum_{t=1}^n \binom{x+n-t-1}{n-t} \frac{1}{t}, \quad n \geq 1,
\end{equation}
and $H_0^{(x)} =0$. When $x$ is the non-negative integer $r$, $H_n^{(r)}$ is the so-called $n$-th hyperharmonic number of order $r$ \cite{conway} (p.\ 258), with~$H_n^{(0)} = \frac{1}{n}$ for $n \geq 1$. In~particular, $H_n^{(1)}$ is the $n$-th harmonic number $H_n = 1 +\frac{1}{2} + \frac{1}{3} + \cdots + \frac{1}{n}$. The~first few hyperharmonic polynomials are as follows:
\begin{align*}
H_1^{(x)} & = 1, \quad  H_2^{(x)} = \frac{1}{2} +x, \quad  H_3^{(x)} = \frac{1}{3}+ x + \frac{1}{2}x^2,
\quad  H_4^{(x)} = \frac{1}{4}+ \frac{11}{12}x +\frac{3}{4}x^2 + \frac{1}{6} x^3, \\
H_5^{(x)} & = \frac{1}{5} +\frac{5}{6}x + \frac{7}{8}x^2 + \frac{1}{3}x^3 + \frac{1}{24}x^4, \quad H_6^{(x)} = \frac{1}{6}
+\frac{137}{180}x +\frac{15}{16}x^2 +\frac{17}{36} x^3 +\frac{5}{48}x^4 +\frac{1}{120}x^5, \\
H_7^{(x)} & = \frac{1}{7} + \frac{7}{10}x +\frac{29}{30}x^2 +\frac{7}{12}x^3 + \frac{25}{144}x^4 +\frac{1}{40}x^5
+ \frac{1}{720}x^6.
\end{align*}

From the generating functions in \eqref{gf1}, \eqref{gf2} and~\eqref{gfhp}, we can deduce the following relations between Cauchy and hyperharmonic polynomials.
\begin{proposition}\label{prop:2}
For integers $n \geq 0$ and for arbitrary $x$ and $y$, we have
\begin{equation}\label{hyp1}
\sum_{m=0}^n (-1)^{m} \frac{c_m(x)}{m!} H_{n+1-m}^{(y)} = \binom{x+y+n-1}{n},
\end{equation}
and
\begin{equation}\label{hyp2}
\sum_{m=0}^n (-1)^{m} \frac{\widehat{c}_{m}(x)}{m!} H_{n+1-m}^{(y)} = \binom{-x+y+n}{n},
\end{equation}
where $H_n^{(x)}$ are the hyperharmonic polynomials defined by \eqref{gfhp}.
\end{proposition}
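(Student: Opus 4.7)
The plan is to establish both identities simultaneously by multiplying generating functions and reading off coefficients of $t^{n+1}$. First I would replace $t$ by $-t$ in the defining generating function \eqref{gf1} of $c_n(x)$, which produces
\begin{equation*}
\frac{-t}{(1-t)^{x}\ln(1-t)} \;=\; \sum_{m=0}^{\infty} (-1)^{m}\,\frac{c_{m}(x)}{m!}\,t^{m}.
\end{equation*}
Multiplying this by the hyperharmonic generating function \eqref{gfhp}, namely $-\ln(1-t)/(1-t)^{y}$, the $\ln(1-t)$ factors cancel and the powers of $(1-t)$ add, leaving the clean expression
\begin{equation*}
\left(\sum_{m\geq 0}(-1)^{m}\frac{c_{m}(x)}{m!}t^{m}\right)\!\left(\sum_{k\geq 0} H_{k}^{(y)}t^{k}\right)=\frac{t}{(1-t)^{x+y}}.
\end{equation*}
Expanding the right-hand side via Newton's binomial series and extracting the coefficient of $t^{n+1}$ gives
\begin{equation*}
\sum_{m=0}^{n+1}(-1)^{m}\frac{c_{m}(x)}{m!}\,H_{n+1-m}^{(y)}=\binom{x+y+n-1}{n}.
\end{equation*}
Since $H_{0}^{(y)}=0$, the term $m=n+1$ vanishes, so the upper summation limit may be replaced by $n$, yielding \eqref{hyp1}.

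For \eqref{hyp2} I would perform the same substitution $t\mapsto -t$ in \eqref{gf2}. Using $(1-t)^{x}/(1-t)=(1-t)^{-(1-x)}$, the transformed series becomes
\begin{equation*}
\frac{-t}{(1-t)^{1-x}\ln(1-t)} \;=\; \sum_{m=0}^{\infty}(-1)^{m}\,\frac{\widehat{c}_{m}(x)}{m!}\,t^{m}.
\end{equation*}
Multiplying again by $-\ln(1-t)/(1-t)^{y}$ yields $t/(1-t)^{y-x+1}$, and extracting the coefficient of $t^{n+1}$, together with the same vanishing of the $m=n+1$ term thanks to $H_{0}^{(y)}=0$, delivers \eqref{hyp2}.

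The proof is entirely driven by the fortuitous cancellation of $\ln(1-t)$ when the Cauchy-polynomial series is multiplied by the hyperharmonic one; no combinatorial identity beyond the Newton binomial series is required. The only minor bookkeeping is the sign flip produced by $t\mapsto -t$ and the reindexing that contracts the natural upper limit $n+1$ to $n$. I therefore do not anticipate any substantial obstacle; the routing of the two generating functions so that the logarithms annihilate is both the crux and the entirety of the argument.
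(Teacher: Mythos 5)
Your proof is correct and follows essentially the same route as the paper: both rest on multiplying the Cauchy generating function by the hyperharmonic one so that the $\ln$ factors cancel, then comparing coefficients of a power of $t$ in a Newton binomial expansion. The only cosmetic differences are that the paper works with $1+t$ (absorbing the signs into a shifted hyperharmonic factor $\sum_{n\geq 0}(-1)^n H_{n+1}^{(y)}t^n$ and comparing coefficients of $t^n$), and it obtains \eqref{hyp2} from \eqref{hyp1} by the substitution $x \to 1-x$ using $\widehat{c}_m(x)=c_m(1-x)$, whereas you rerun the same generating-function computation on \eqref{gf2} directly.
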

\begin{proof}
Equation~\eqref{hyp1} follows from the relation
\begin{equation*}
\left( \sum_{n=0}^{\infty} \frac{c_n(x)}{n!} t^n \right) \left( \sum_{n=0}^{\infty} (-1)^n
H_{n+1}^{(y)} t^n \right) = \frac{1}{(1+t)^{x+y}},
\end{equation*}
after comparing the coefficients of $t^n$ on both sides. On~the other hand, Equation~\eqref{hyp2} is obtained by setting $x \to 1-x$ in \eqref{hyp1}.
\end{proof}

\begin{remark}
The relation in Equation~\eqref{hyp1} was first discovered (although expressed in a slightly different way) by Komatsu and Mez\H{o} (\cite{mezo}, Proposition 2).
\end{remark}

We present the following observations with respect to Proposition \ref{prop:2}:
\begin{itemize}
\item
From \eqref{hyp1} and \eqref{hyp2}, we quickly obtain ($n \geq 0$)
\begin{equation}\label{hyp3}
\sum_{m=0}^n (-1)^{m} \frac{c_m(x)}{m!} H_{n+1-m}^{(-x)} = \delta_{n,0},
\end{equation}
and
\begin{equation}\label{hyp4}
\sum_{m=0}^n (-1)^{m} \frac{\widehat{c}_{m}(x)}{m!} H_{n+1-m}^{(x)} = 1.
\end{equation}
Furthermore, using \eqref{rep2} in \eqref{hyp3} and \eqref{hyp4}, we arrive at the following recurrence formulas for Cauchy polynomials ($n \geq 1$):
\begin{equation*}\label{conec1}
\frac{c_n(x)}{n!} = \sum_{m=0}^{n-1} \frac{c_m(x)}{m!} \sum_{t=0}^{n-m}
\frac{(-1)^{n+1-m-t}}{n+1-m-t} \binom{x}{t},
\end{equation*}
and
\begin{equation*}\label{conec2}
\frac{\widehat{c}_n(x)}{n!} = (-1)^n + \sum_{m=0}^{n-1} \frac{\widehat{c}_m(x)}{m!} \sum_{t=0}^{n-m}
\frac{(-1)^{n+1-m}}{n+1-m-t} \binom{x+t-1}{t}.
\end{equation*}
Note that when $x=1$, the~last equation reduces to the identity in \eqref{zhao}.

\item
Clearly, more appealing identities can be derived from \eqref{hyp1} and \eqref{hyp2}. For~example, by~taking $y =-1$ and noting that~\cite{cere} (p.\ 18)
\begin{equation*}
H_{n+1}^{(-1)} = \begin{cases}
  1, & \text{for $n=0$;} \\
  \displaystyle{- \frac{1}{n(n+1)}}, & \text{for $n \geq 1$,}
\end{cases}
\end{equation*}
from \eqref{hyp1} and \eqref{hyp2}, we find the recurrence formulas ($n \geq 1$)
\begin{equation*}
\frac{c_n(x)}{n!} = (-1)^n \binom{x+n-2}{n} + \sum_{m=0}^{n-1} \frac{c_m(x)}{m!}
\, \frac{(-1)^{n-m}}{(n-m)(n+1-m)},
\end{equation*}
and
\begin{equation}\label{hyp5}
\frac{\widehat{c}_n(x)}{n!} = \binom{x}{n} + \sum_{m=0}^{n-1} \frac{\widehat{c}_m(x)}{m!}
\, \frac{(-1)^{n-m}}{(n-m)(n+1-m)}.
\end{equation}
Additionally, by~taking $x=1$ in \eqref{hyp5}, we obtain
\begin{equation*}
\frac{c_n}{n!} = \delta_{n,1} + \sum_{m=0}^{n-1} \frac{c_m}{m!} \, \frac{(-1)^{n-m}}{(n-m)(n+1-m)}, \quad n \geq 1.
\end{equation*}
\end{itemize}

By means of Proposition \ref{prop:2}, we also obtain the following relationships between Cauchy polynomials of the first and second kind.
\begin{proposition}\label{prop:3}
For integers $n \geq 0$ and for arbitrary $x$, we have
\begin{equation}\label{hyp6}
\frac{c_n(x)}{n!} = \sum_{m=0}^n (-1)^m \frac{\widehat{c}_m(x+n)}{m!},
\end{equation}
and
\begin{equation}\label{hyp7}
\frac{\widehat{c}_n(x)}{n!} = \sum_{m=0}^n (-1)^m \frac{c_m(x-n)}{m!}.
\end{equation}
\end{proposition}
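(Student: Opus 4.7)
The plan is to establish both identities directly from the integral representations in \eqref{defcaup}, treating the right-hand sides as finite sums that can be pulled inside a single integral over $[0,1]$. For \eqref{hyp6}, I would substitute $\widehat{c}_m(x+n) = m!\int_0^1 \binom{x+n-t}{m}\,dt$ into the right-hand side and interchange the finite sum with the integral, reducing the statement to the pointwise identity
\begin{equation*}
\sum_{m=0}^n (-1)^m \binom{x+n-t}{m} = (-1)^n \binom{x+n-1-t}{n},
\end{equation*}
which is the classical partial-sum formula $\sum_{m=0}^n (-1)^m \binom{y}{m} = (-1)^n \binom{y-1}{n}$ applied with $y = x+n-t$.

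From there, a single application of the sign-reversal $\binom{a+n-1}{n} = (-1)^n \binom{-a}{n}$ with $a = x-t$ rewrites the integrand as $\binom{t-x}{n}$, so the integral collapses to $\int_0^1 \binom{t-x}{n}\,dt = c_n(x)/n!$, proving \eqref{hyp6}. The two factors of $(-1)^n$ coming from the partial-sum identity and the sign-reversal cancel, which is the only piece of bookkeeping that needs care.

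For \eqref{hyp7}, the quickest route is to invoke the reflection $\widehat{c}_k(y) = c_k(1-y)$: replacing $x$ by $1-x$ in \eqref{hyp6} turns the left-hand side into $c_n(1-x)/n! = \widehat{c}_n(x)/n!$ and each summand $\widehat{c}_m(1-x+n)$ on the right into $c_m(x-n)$, yielding \eqref{hyp7} immediately. A parallel direct argument starting from $c_m(x-n) = m!\int_0^1 \binom{t-x+n}{m}\,dt$ also works and is essentially the mirror image of the computation above.

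I expect no real obstacle: the whole proof rests on the integral representation \eqref{defcaup}, the elementary partial-sum identity for alternating binomial coefficients, and the standard negation identity $\binom{-a}{n} = (-1)^n\binom{a+n-1}{n}$. The only subtlety, as noted, is tracking the cancellation of the two factors of $(-1)^n$, after which the identities fall out in one line each.
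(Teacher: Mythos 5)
Your proposal is correct, but it takes a genuinely different route from the paper. The paper derives \eqref{hyp6} as a corollary of Proposition \ref{prop:2}: it integrates both sides of \eqref{hyp2} over the order variable $y$ from $0$ to $1$, uses the representation $H_{j+1}^{(y)} = \frac{d}{dy}\binom{y+j}{j+1}$ to see that $\int_0^1 H_{n+1-m}^{(y)}\,dy = 1$, identifies $\int_0^1 \binom{-x+y+n}{n}dy$ with $c_n(x-n)/n!$, and then shifts $x \to x+n$; identity \eqref{hyp7} is handled analogously. You instead work directly from the integral representations \eqref{defcaup}, collapsing the alternating sum via
\begin{equation*}
\sum_{m=0}^n (-1)^m \binom{x+n-t}{m} = (-1)^n \binom{x+n-1-t}{n}
= \binom{t-x}{n},
\end{equation*}
where the second equality is the negation identity $\binom{a+n-1}{n}=(-1)^n\binom{-a}{n}$ with $a=x-t$, and then integrate; your sign bookkeeping is right, and deducing \eqref{hyp7} from \eqref{hyp6} via the reflection $\widehat{c}_n(x)=c_n(1-x)$ is legitimate since that relation is established in the Introduction from the generating functions. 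Your argument is more elementary and self-contained: it needs only \eqref{defcaup}, Pascal's rule (hiding inside the partial-sum identity), and the negation identity, whereas the paper's proof presupposes the hyperharmonic-polynomial machinery of Section \ref{sec:4}. What the paper's route buys in exchange is the structural insight that Proposition \ref{prop:3} is the ``integrated over $y$'' shadow of Proposition \ref{prop:2}, which is precisely the theme of that section.
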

\begin{proof}
We prove only \eqref{hyp6}. For~this, integrate over $y$ from $0$ to $1$ on both sides of \eqref{hyp2} to~obtain
\begin{equation*}
\sum_{m=0}^n (-1)^{m} \frac{\widehat{c}_m(x)}{m!} \int_{0}^1 H_{n+1-m}^{(y)} dy
= \int_{0}^1 \binom{-x+y+n}{n} dy = \frac{c_n(x-n)}{n!}.
\end{equation*}
{Hence,} using the representation $H_{j+1}^{(y)} = \frac{d}{dy} \binom{y+j}{j+1}$ (cf.\ \cite{cere}, Equation~(5)), and~setting $x \to x+n$, we obtain \eqref{hyp6}.
\end{proof}

\begin{remark}
Recalling that $c_n(2) = (-1)^n n! \sum_{m=0}^n (-1)^m \frac{\widehat{c}_m}{m!}$ and $\widehat{c}_n = (-1)^n n! \sum_{m=0}^n (-1)^m \frac{c_m}{m!}$, we can use \eqref{hyp6} (with $x = -n$) and \eqref{hyp7} (with $x=n$) to obtain the identities
\begin{equation*}
c_n(-n) = (-1)^n c_n(2) \quad\text{and}\quad \widehat{c}_n(n) = (-1)^n \widehat{c}_n.
\end{equation*}
\end{remark}

We finish this section by giving the following formulas for the hyperharmonic polynomials $H_{n}^{(x)}$ and $H_{n}^{(x+1)}$ in terms of Bernoulli polynomials and the corresponding Cauchy numbers ($n \geq 1$):
\begin{equation*}
H_{n}^{(x)} = \frac{1}{n!} \sum_{m=1}^n (-1)^{n-m} \binom{n}{m} c_{n-m}
\sum_{i=1}^m \genfrac{[}{]}{0pt}{}{m+1}{i+1} i B_{i-1}(x),
\end{equation*}
and
\begin{equation*}
H_{n}^{(x+1)} = \frac{1}{n!} \sum_{m=1}^n (-1)^{n-m} \binom{n}{m} \widehat{c}_{n-m}
\sum_{i=1}^m \genfrac{[}{]}{0pt}{}{m+1}{i+1} i B_{i-1}(x).
\end{equation*}

\section{Several Formulas Involving Bernoulli and Cauchy~Polynomials}\label{sec:5}

Recall from Theorem \ref{th:1} that Cauchy numbers of the first kind can be expressed in terms of Bernoulli numbers as
\begin{equation*}
c_n = \delta_{n,1} + (-1)^{n+1} n \sum_{m=1}^n \genfrac{[}{]}{0pt}{}{n-1}{m-1} \frac{B_m}{m},
\quad n \geq 1.
\end{equation*}
{Furthermore,} from~Theorem \ref{th:4} we quickly obtain
\begin{equation*}
\widehat{c}_{2n}(n) = -n \sum_{m=1}^n \frac{u(n,m)}{m} B_{2m}, \quad n \geq 1.
\end{equation*}

Our goal in this section is to derive several more formulas involving Bernoulli numbers and polynomials, Cauchy numbers and polynomials, and~GSNs of both kinds. Firstly, we derive a couple of explicit formulas for Bernoulli polynomials in terms of the corresponding Cauchy equivalents and the GSN of the second kind.
\begin{theorem}\label{th:7}
For integers $n \geq 1$ and for arbitrary $x$, we have
\begin{equation}
-\frac{B_{n}(x)}{n} = \sum_{m=1}^n \frac{c_{m}(x)}{m} \genfrac{\{}{\}}{0pt}{}{n-1}{m-1}_x, \label{th31}
\end{equation}
and
\begin{equation}
\frac{(-1)^{n}- B_{n}(x)}{n} = \sum_{m=1}^n \frac{\widehat{c}_{m}(-x)}{m}
\genfrac{\{}{\}}{0pt}{}{n-1}{m-1}_x, \label{th32}
\end{equation}
where $\genfrac{\{}{\}}{0pt}{}{n}{m}_x$ is the polynomial defined in \eqref{gs2}.
\end{theorem}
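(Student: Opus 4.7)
The plan is to prove both \eqref{th31} and \eqref{th32} by matching exponential generating functions in $t$. Multiplying \eqref{th31} by $t^{n-1}/(n-1)!$ and summing over $n\geq 1$, the left-hand side becomes
\[
-\sum_{n\geq 1}\frac{B_n(x)}{n!}t^{n-1} \;=\; \frac{1}{t}-\frac{e^{xt}}{e^t-1},
\]
by the classical e.g.f.\ of the Bernoulli polynomials. For the right-hand side I first record the e.g.f.\ of the GSN of the second kind, which follows directly from \eqref{gs22}:
\[
\sum_{n\geq 0}\genfrac{\{}{\}}{0pt}{}{n}{m}_x\frac{t^n}{n!} \;=\; \frac{e^{xt}(e^t-1)^m}{m!}.
\]

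After shifting indices ($n\to n+1$, $m\to m+1$) and interchanging the order of summation, the summed right-hand side of \eqref{th31} collapses to
\[
\frac{e^{xt}}{e^t-1}\bigl(F(e^t-1)-1\bigr),\qquad F(u)=\sum_{m\geq 0}\frac{c_m(x)}{m!}u^m=\frac{u}{(1+u)^x\ln(1+u)},
\]
where the closed form for $F$ is the generating function \eqref{gf1}. Substituting $u=e^t-1$ gives $F(e^t-1)=(e^t-1)/(t\,e^{xt})$, and hence the right-hand side also simplifies to $1/t-e^{xt}/(e^t-1)$, matching the left-hand side and establishing \eqref{th31}.

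Equation \eqref{th32} follows from the same procedure. The extra factor $(-1)^n$ on the left contributes $(e^{-t}-1)/t$ to the summed series, so the full summed left-hand side becomes $e^{-t}/t - e^{xt}/(e^t-1)$. On the right, the role of $F$ is played by $G(u)=\sum_{m}\widehat{c}_m(-x)u^m/m!=u/[(1+u)^{1+x}\ln(1+u)]$, using $\widehat{c}_m(-x)=c_m(1+x)$ together with \eqref{gf1}; this yields $G(e^t-1)=(e^t-1)/(t\,e^{(1+x)t})$, and a short simplification produces the matching right-hand side $e^{-t}/t - e^{xt}/(e^t-1)$.

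I do not expect a serious obstacle: once the relevant generating functions are assembled the argument is essentially mechanical. The only delicate points are the index shifts $n\to n-1$ and $m\to m-1$, separating the $m=0$ contribution (where $c_0(x)=\widehat{c}_0(-x)=1$) in order to identify the inner sum with $F(e^t-1)-1$ or $G(e^t-1)-1$, and justifying the interchange of summation, which is routine at the formal-power-series level. An alternative but equivalent route would invoke the inversion formula \eqref{inv}, reducing both identities to $c_{n+1}(x)/(n+1)=-\sum_{m=0}^n(-1)^{n-m}\genfrac{[}{]}{0pt}{}{n}{m}_x B_{m+1}(x)/(m+1)$ and its $\widehat{c}$-analogue; the generating-function proof above is the cleanest way to verify these.
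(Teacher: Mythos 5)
Your proof is correct, but it takes a genuinely different route from the paper's. The paper proves \eqref{th31} and \eqref{th32} by integrating a power-sum identity: it starts from the expansion \eqref{th3p1}, $S_n(x) = S_n(a-1) + \sum_{m=0}^{n} m! \binom{x+1-a}{m+1} \genfrac{\{}{\}}{0pt}{}{n}{m}_a$ (quoted from an external reference), substitutes $n \to n-1$ and $x \to x-1$ (resp.\ $x \to -x-1$), integrates both sides over the unit interval, and then invokes Lemma \ref{lm:1} together with \eqref{defb} and the integral representations \eqref{defcaup} to recognize $m!\int_0^1 \binom{x-a}{m}\,dx$ as $c_m(a)$ (resp.\ $\widehat{c}_m(-a)$). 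You instead verify the identities at the level of exponential generating functions: the key ingredients are the e.g.f.\ $\sum_{n\geq 0} \genfrac{\{}{\}}{0pt}{}{n}{m}_x \,t^n/n! = e^{xt}(e^t-1)^m/m!$ (which does follow from \eqref{gs22}, since the $m$-th finite difference of a polynomial of degree $n<m$ vanishes, so the terms with $n<m$ contribute nothing), the classical Bernoulli e.g.f., the Cauchy generating function \eqref{gf1}, the relation $\widehat{c}_m(-x)=c_m(1+x)$, and the substitution $u=e^t-1$; I checked the computations, including the separation of the $m=0$ term and the index shifts, and they are sound, with the composition $F(e^t-1)$ well defined as a formal power series because $e^t-1$ has no constant term. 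What each approach buys: yours is essentially self-contained (no appeal to the external power-sum lemma behind \eqref{th3p1}, no integration formulas) and mechanical once the generating functions are assembled; the paper's substitute-and-integrate scheme, on the other hand, is the workhorse of the whole section --- the same technique with a different substitution immediately yields Theorem \ref{th:8}, and it derives the result directly from the defining integrals of the Cauchy polynomials. One small caveat: your closing remark about the inversion formula \eqref{inv} is a reformulation rather than an independent proof (it is exactly how the paper afterwards derives \eqref{inv2} from \eqref{th31}), but your generating-function argument does not rely on it, so nothing is at stake.
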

\begin{proof}
The proof of the theorem relies on the following formula for $S_n(x)$ \cite{cere4} (Lemma 1):
\begin{equation}\label{th3p1}
S_n(x) = S_n(a-1) + \sum_{m=0}^{n} m! \binom{x+1-a}{m+1} \genfrac{\{}{\}}{0pt}{}{n}{m}_a,  \quad n \geq 0,
\end{equation}
where $a$ stands for a fixed (but arbitrary) real number. To~derive \eqref{th31}, make the replacements $n \to n-1$ and $x \to x-1$ in \eqref{th3p1} and integrate over $x$ from $0$ to $1$ on both sides to obtain
\begin{equation*}
\int_{0}^1 S_{n-1}(x-1) dx - S_{n-1}(a-1) = \sum_{m=1}^{n} (m-1)! \genfrac{\{}{\}}{0pt}{}{n-1}{m-1}_a
\int_{0}^1 \binom{x-a}{m} dx.
\end{equation*}
{Applying} \eqref{lm11} with $y=0$ gives $\int_{0}^1 S_{n-1}(x-1) dx = -\frac{B_{n}(1)}{n}$ for $n \geq 1$. Moreover, from~\eqref{defb} we have $S_{n-1}(a-1) = \frac{1}{n}\big(B_{n}(a) - B_{n}(1)\big)$. Hence, recalling the definition of $c_n(x)$ in \eqref{defcaup}, we deduce that
\begin{equation*}
-\frac{B_{n}(a)}{n} = \sum_{m=1}^{n} \frac{c_{m}(a)}{m} \genfrac{\{}{\}}{0pt}{}{n-1}{m-1}_a,
\end{equation*}
which is Equation~\eqref{th31} with $x$ replaced by $a$.

Similarly, by~setting $n \to n-1$ and $x \to -x-1$ in \eqref{th3p1} and integrating both sides from $0$ to $1$, we obtain
\begin{equation*}
\int_{0}^1 S_{n-1}(-x-1) dx - S_{n-1}(a-1) = \sum_{m=1}^{n} (m-1)! \genfrac{\{}{\}}{0pt}{}{n-1}{m-1}_{a} \int_{0}^1 \binom{-a-x}{m} dx.
\end{equation*}
{By} \eqref{lm12} (with $y =0$), we have $\int_{0}^1 S_{n-1}(-x-1) dx = \frac{(-1)^n - B_{n}(1)}{n}$ for $n \geq 1$. Thus, recalling the definition of $\widehat{c}_n(x)$ in \eqref{defcaup}, it follows that
\begin{equation*}
\frac{(-1)^n - B_{n}(a)}{n} = \sum_{m=1}^{n} \frac{\widehat{c}_{m}(-a)}{m} \genfrac{\{}{\}}{0pt}{}{n-1}{m-1}_{a},
\end{equation*}
which is Equation~\eqref{th32} with $x$ replaced by $a$.
\end{proof}

Let us observe the following consequences of Theorem \ref{th:7}:
\begin{itemize}
\item
Setting $x=1$ and $x=0$ in \eqref{th31} gives (cf.\ \cite{merlini}, Theorem 2.9)
\begin{equation*}
-\frac{B_{n}}{n} = (-1)^n \sum_{m=1}^n \frac{\widehat{c}_{m}}{m} \genfrac{\{}{\}}{0pt}{}{n}{m}
\quad\text{and}\quad
-\frac{B_{n}}{n} = \sum_{m=1}^n \frac{c_{m}}{m} \genfrac{\{}{\}}{0pt}{}{n-1}{m-1}.
\end{equation*}

\item
Inverting \eqref{th31} and \eqref{th32} produces
\begin{equation}\label{inv2}
-\frac{c_{n}(x)}{n} = \sum_{m=1}^n \frac{(-1)^{n-m}}{m} \genfrac{[}{]}{0pt}{}{n-1}{m-1}_x B_{m}(x),
\end{equation}
and
\begin{equation}\label{inv3}
-\frac{\widehat{c}_{n}(-x)}{n} = \sum_{m=1}^n \frac{(-1)^{n}}{m} \genfrac{[}{]}{0pt}{}{n-1}{m-1}_x
\big( (-1)^m B_{m}(x)- 1\big).
\end{equation}
Upon using \eqref{th12}, this last equation can be written as
\begin{equation*}
-\frac{\widehat{c}_{n}(-x)}{n} = \widehat{c}_{n-1}(-x) + \sum_{m=1}^n \frac{(-1)^{n-m}}{m}
\genfrac{[}{]}{0pt}{}{n-1}{m-1}_x  B_{m}(x).
\end{equation*}

\item
For $x =1$ and $x=0$, \eqref{inv2} gives (cf.\ \cite{merlini}, Theorem 2.10)
\begin{equation}\label{exp5}
\frac{(-1)^{n+1} \widehat{c}_n}{n} = \sum_{m=1}^n \genfrac{[}{]}{0pt}{}{n}{m} \frac{B_m}{m}
\quad\text{and}\quad
\frac{(-1)^{n+1} c_n}{n} = \sum_{m=1}^n (-1)^m \genfrac{[}{]}{0pt}{}{n-1}{m-1} \frac{B_m}{m}.
\end{equation}

\item
The simultaneous validity of \eqref{exp3} and the second identity in \eqref{exp5} for $n \geq 2$ implies the~equality
\begin{equation*}
\sum_{m=1}^n \genfrac{[}{]}{0pt}{}{n-1}{m-1} \frac{B_m}{m} = \sum_{m=1}^n (-1)^m \genfrac{[}{]}{0pt}{}{n-1}{m-1}
\frac{B_m}{m}, \quad n \geq 2,
\end{equation*}
from which one can deduce inductively that $B_{2m+1} =0$ for all $m \geq 1$.
\end{itemize}

Interestingly, as~the next theorem shows, Bernoulli polynomials can be expressed in terms of the corresponding Cauchy numbers and the GSN of the second kind.
\begin{theorem}\label{th:8}
For integers $n \geq 1$ and for arbitrary $x$, we have
\begin{equation}
B_{n}(x) = x^n - n \sum_{m=1}^n \frac{c_{m}}{m} \genfrac{\{}{\}}{0pt}{}{n-1}{m-1}_x, \label{th41}
\end{equation}
and
\begin{equation}
B_{n}(x) = (x-1)^n - n \sum_{m=1}^n \frac{\widehat{c}_{m}}{m} \genfrac{\{}{\}}{0pt}{}{n-1}{m-1}_x , \label{th42}
\end{equation}
where $\genfrac{\{}{\}}{0pt}{}{n}{m}_x$ is the polynomial defined in \eqref{gs2}.
\end{theorem}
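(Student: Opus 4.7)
The plan is to mimic the proof of Theorem \ref{th:7}, but to choose substitutions in the master identity \eqref{th3p1} so that the $x$-dependence cancels out of the binomial coefficient inside the sum; this will replace $c_m(x)$ and $\widehat{c}_m(-x)$ by the pure numbers $c_m$ and $\widehat{c}_m$, which is exactly what distinguishes Theorem \ref{th:8} from Theorem \ref{th:7}.

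For \eqref{th41}, I would take \eqref{th3p1}, set $n \to n-1$, and make the substitution $x \to x + t - 1$ (keeping $a$ generic) to obtain
\begin{equation*}
S_{n-1}(x+t-1) = S_{n-1}(a-1) + \sum_{m=1}^{n} (m-1)! \binom{x+t-a}{m} \genfrac{\{}{\}}{0pt}{}{n-1}{m-1}_a.
\end{equation*}
Now I would specialize $a = x$, which collapses the binomial to $\binom{t}{m}$, integrate both sides over $t$ from $0$ to $1$, evaluate the left-hand integral by \eqref{lm11} (with $y=x$) as $\tfrac{1}{n}(x^n - B_n(1))$, use $\int_0^1 \binom{t}{m} dt = c_m/m!$ from the integral representation \eqref{defcaup} at $x=0$, and rewrite $S_{n-1}(x-1) = \tfrac{1}{n}(B_n(x) - B_n(1))$ by \eqref{defb}. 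After the $B_n(1)$ terms cancel, solving for $B_n(x)$ gives precisely \eqref{th41}.

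For \eqref{th42}, I would repeat the argument with the substitution $x \to x - t - 1$ (again taking $a = x$). This now produces $\binom{-t}{m}$ inside the sum, so the right-hand integral evaluates as $\int_0^1 \binom{-t}{m} dt = \widehat{c}_m/m!$. On the left, \eqref{lm12} with $y = x$ gives $\tfrac{1}{n}((x-1)^n - B_n(1))$, and the same bookkeeping as before yields \eqref{th42}.

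The main obstacle is a conceptual one rather than a computational one: spotting that the correct way to obtain the Cauchy \emph{numbers} instead of the Cauchy \emph{polynomials} is to pair the shift $x \to x \pm t - 1$ with the specialization $a = x$, which kills the $x$ inside the binomial argument while leaving it inside the Stirling subscript. After this is identified, the rest is a routine adaptation of Theorem \ref{th:7}'s argument using Lemma \ref{lm:1}, the integral representation \eqref{defcaup}, and the relation \eqref{defb}.
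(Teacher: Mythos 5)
Your proof is correct and is essentially the paper's own argument up to a renaming of variables: your choice of keeping $x$ as the free parameter in the Stirling subscript and integrating over $t$, via the substitution $x \to x \pm t - 1$ followed by $a = x$ in \eqref{th3p1}, corresponds exactly to the paper's substitution $x \to \pm x + a - 1$ with generic $a$, integration over $x \in [0,1]$, and a final renaming of $a$ to $x$. Both versions invoke the same ingredients --- Lemma \ref{lm:1}, the relation \eqref{defb}, and the integral representations \eqref{defcaup} --- in the same way, so there is no substantive difference.
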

\begin{proof}
Set $n \to n-1$ and $x \to x+a-1$ in \eqref{th3p1} to obtain
\begin{equation*}
S_{n-1}(x+a-1) - S_{n-1}(a-1) = \sum_{m=1}^{n} (m-1)! \binom{x}{m} \genfrac{\{}{\}}{0pt}{}{n-1}{m-1}_a.
\end{equation*}
{Thus,} integrating both sides over the unit interval, we have
\begin{equation*}
\int_{0}^{1} S_{n-1}(x+a-1) dx - S_{n-1}(a-1) = \sum_{m=1}^{n} \frac{c_m}{m} \genfrac{\{}{\}}{0pt}{}{n-1}{m-1}_a.
\end{equation*}
{By} \eqref{lm11} (with $y =a$), $\int_{0}^1 S_{n-1}(x+a-1) dx$ is equal to $\frac{a^n - B_{n}(1)}{n}$ for $n \geq 1$. Then, using that $S_{n-1}(a-1) = \frac{1}{n}\big( B_{n}(a) - B_{n}(1) \big)$, it follows that
\begin{equation*}
B_n(a) = a^n - n \sum_{m=1}^{n} \frac{c_m}{m} \genfrac{\{}{\}}{0pt}{}{n-1}{m-1}_a,
\end{equation*}
which is Equation~\eqref{th41} after renaming $a$ to $x$.

On the other hand, setting $n \to n-1$ and $x \to -x+a-1$ in \eqref{th3p1} gives
\begin{equation*}
S_{n-1}(-x+a-1) - S_{n-1}(a-1) = \sum_{m=1}^{n} (m-1)! \binom{-x}{m} \genfrac{\{}{\}}{0pt}{}{n-1}{m-1}_a.
\end{equation*}
{Therefore,} integrating from $0$ to $1$ on both sides and noting that $\int_{0}^1 S_{n-1}(-x+a-1) dx = \frac{(a-1)^n - B_{n}(1)}{n}$ for $n \geq 1$, we arrive at Equation~\eqref{th42} after renaming $a$ to $x$.
\end{proof}

Setting $x=1$ in \eqref{th41} and $x=0$ in \eqref{th42}, we obtain the following couple of alternative formulas for Bernoulli numbers:
\begin{equation*}
B_n = (-1)^n \left( 1- n \sum_{m=1}^n \frac{c_{m}}{m} \genfrac{\{}{\}}{0pt}{}{n}{m} \right),
\end{equation*}
and
\begin{equation*}
B_n = (-1)^n - n \sum_{m=1}^n \frac{\widehat{c}_{m}}{m} \genfrac{\{}{\}}{0pt}{}{n-1}{m-1}.
\end{equation*}

Furthermore, combining \eqref{th31} and \eqref{th41} yields
\begin{equation*}
\sum_{m=1}^n \frac{c_m - c_m(x)}{m} \genfrac{\{}{\}}{0pt}{}{n-1}{m-1}_x = \frac{x^n}{n}, \quad n \geq 1,
\end{equation*}
and, in~particular,
\begin{equation*}
\sum_{m=1}^n \frac{c_m - \widehat{c}_m}{m} \genfrac{\{}{\}}{0pt}{}{n}{m} = \frac{1}{n}, \quad n \geq 1.
\end{equation*}
{Recalling} that $c_m - \widehat{c}_m = m \widehat{c}_{m-1}$, the~last equation can be written as
\begin{equation*}
\sum_{m=0}^n \genfrac{\{}{\}}{0pt}{}{n+1}{m+1} \widehat{c}_m = \frac{1}{n+1}, \quad n \geq 0,
\end{equation*}
which may also be obtained by inverting \eqref{kargin}.

Next, we derive the following relationships between Bernoulli and Cauchy polynomials.
\begin{proposition}\label{prop:4}
For integers $n \geq 0$ and for arbitrary $x$ and $y$, we have
\begin{align*}
B_n(x) & = \sum_{m=0}^n \sum_{l=0}^m  (-1)^m m! \genfrac{\{}{\}}{0pt}{}{n}{m}_x
\genfrac{\{}{\}}{0pt}{}{m}{l}_y  c_{l}(y),  \\
B_n(x) & = \sum_{m=0}^n \sum_{l=0}^m  m! \genfrac{\{}{\}}{0pt}{}{n}{m}_x
\genfrac{\{}{\}}{0pt}{}{m}{l}_y  \widehat{c}_{l}(-y),  \\
c_n(x) & =  \sum_{m=0}^n \sum_{l=0}^m  \frac{(-1)^{n-m+l}}{m!} \genfrac{[}{]}{0pt}{}{n}{m}_x
\genfrac{[}{]}{0pt}{}{m}{l}_y  B_{l}(y),  \\
\widehat{c}_n(-x) & =  \sum_{m=0}^n \sum_{l=0}^m  \frac{(-1)^{n-l}}{m!} \genfrac{[}{]}{0pt}{}{n}{m}_x
\genfrac{[}{]}{0pt}{}{m}{l}_y  B_{l}(y).
\end{align*}
\end{proposition}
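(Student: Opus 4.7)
The plan is to reduce all four identities to Proposition~\ref{prop:1} and Equation~\eqref{rem4}, after first establishing as a key auxiliary formula the expansion
\begin{equation*}
B_n(x) = \sum_{m=0}^n \frac{(-1)^m m!}{m+1} \genfrac{\{}{\}}{0pt}{}{n}{m}_x.
\end{equation*}
To derive this formula, I would substitute $u = e^t - 1$ in the Taylor expansion $\frac{\ln(1+u)}{u} = \sum_{k \geq 0} \frac{(-1)^k u^k}{k+1}$, obtaining $\frac{t}{e^t-1} = \sum_{m \geq 0} \frac{(-1)^m (e^t-1)^m}{m+1}$. Multiplying both sides by $e^{xt}$ and using the exponential generating function $\frac{e^{xt}(e^t-1)^m}{m!} = \sum_n \genfrac{\{}{\}}{0pt}{}{n}{m}_x \frac{t^n}{n!}$, which is immediate from \eqref{gs22}, the auxiliary formula results from comparing coefficients of $\frac{t^n}{n!}$ on both sides.

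With this formula established, the first identity of the proposition follows by collapsing the inner $l$-sum on its right-hand side via the first equation in \eqref{rem4}, namely $\sum_{l=0}^m \genfrac{\{}{\}}{0pt}{}{m}{l}_y c_l(y) = \frac{1}{m+1}$, which reduces the right-hand side to $\sum_{m=0}^n \frac{(-1)^m m!}{m+1} \genfrac{\{}{\}}{0pt}{}{n}{m}_x = B_n(x)$. The second identity is handled the same way, now using the second equation in \eqref{rem4}, $\sum_{l=0}^m \genfrac{\{}{\}}{0pt}{}{m}{l}_y \widehat{c}_l(-y) = \frac{(-1)^m}{m+1}$; the extra $(-1)^m$ supplied by this orthogonality combines with the sign-free prefactor $m!$ to again reproduce the auxiliary formula.

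For the third and fourth identities, I would invert the auxiliary formula via \eqref{inv} applied to $f_n = (-1)^n n!/(n+1)$ and $g_n = B_n(y)$, which yields $\frac{(-1)^n n!}{n+1} = \sum_{m=0}^n (-1)^{n-m} \genfrac{[}{]}{0pt}{}{n}{m}_y B_m(y)$, or equivalently $\frac{1}{m+1} = \frac{1}{m!} \sum_{l=0}^m (-1)^l \genfrac{[}{]}{0pt}{}{m}{l}_y B_l(y)$. Substituting this expression for $\frac{1}{m+1}$ into \eqref{th11} and \eqref{th12} of Proposition~\ref{prop:1} yields the third and fourth identities, respectively, after a brief sign simplification (using $(-1)^{n+l} = (-1)^{n-l}$ in the case of \eqref{th12}). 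The main obstacle is the auxiliary formula itself, but once its short generating function derivation is dispatched, the rest of the argument is straightforward bookkeeping.
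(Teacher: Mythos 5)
Your proposal is correct and takes essentially the same route as the paper: both rest on the auxiliary expansion $B_n(x) = \sum_{m=0}^n \frac{(-1)^m m!}{m+1}\genfrac{\{}{\}}{0pt}{}{n}{m}_x$, obtain the first two identities by collapsing the inner sum via the orthogonality relations \eqref{rem4}, and obtain the last two by inverting that expansion through \eqref{inv} and substituting the resulting expression for $\frac{1}{m+1}$ into \eqref{th11} and \eqref{th12}. The only (immaterial) difference is how the auxiliary expansion is justified: the paper invokes a known explicit formula for $B_n(x)$ together with \eqref{gs22}, while you rederive it self-containedly by the substitution $u=e^t-1$ in the logarithm series and the exponential generating function of $\genfrac{\{}{\}}{0pt}{}{n}{m}_x$ — both are valid.
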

\begin{proof}
We prove the first and third identities. For~this, we employ the following well-known explicit formula for Bernoulli polynomials~\cite{euler1} (Equation~(18):
\begin{equation*}
B_n(x) = \sum_{m=0}^n \sum_{l=0}^m \frac{(-1)^l}{m+1} \binom{m}{l} (x+l)^n.
\end{equation*}
{By} \eqref{gs22}, it is equal to
\begin{equation}\label{th51}
B_n(x) = \sum_{m=0}^n \frac{(-1)^m \, m!}{m+1} \genfrac{\{}{\}}{0pt}{}{n}{m}_x.
\end{equation}
{Moreover,} from~the first identity in \eqref{rem4}, we have
\begin{equation*}
\frac{1}{m+1} = \sum_{l=0}^m \genfrac{\{}{\}}{0pt}{}{m}{l}_y  c_l(y),
\end{equation*}
which holds for arbitrary $y$. Hence, substituting this expression for $\frac{1}{m+1}$ into \eqref{th51}, we easily derive the first identity of Proposition \ref{prop:4}. On~the other hand, by~inverting \eqref{th51}, we obtain
\begin{equation*}
\frac{1}{m+1} = \frac{1}{m!} \sum_{l=0}^m (-1)^l  \genfrac{[}{]}{0pt}{}{m}{l}_y  B_l(y),
\end{equation*}
which holds for arbitrary $y$. Using this expression for $\frac{1}{m+1}$ in \eqref{th11}, we end up with the third identity of Proposition \ref{prop:4}.
\end{proof}

Similarly, relying on the previous Theorems \ref{th:7} and \ref{th:8}, and using the identities in \eqref{rem4}, the following formulas expressing the Bernoulli polynomials in terms of Cauchy polynomials and the GSN of the second kind can be obtained.
\begin{proposition}\label{prop:5}
For integers $n \geq 1$ and for arbitrary $x$ and $y$, we have
\begin{align*}
-\frac{B_n(x)}{n} & = \sum_{m=1}^n \sum_{l=1}^m \genfrac{\{}{\}}{0pt}{}{n-1}{m-1}_x
\genfrac{\{}{\}}{0pt}{}{m-1}{l-1}_y c_m(x) c_{l-1}(y),  \\
\frac{B_n(x)}{n} & = \sum_{m=1}^n \sum_{l=1}^m (-1)^m \genfrac{\{}{\}}{0pt}{}{n-1}{m-1}_x
\genfrac{\{}{\}}{0pt}{}{m-1}{l-1}_y c_m(x)  \widehat{c}_{l-1}(-y), \\
\frac{(-1)^n -B_n(x)}{n} & = \sum_{m=1}^n \sum_{l=1}^m \genfrac{\{}{\}}{0pt}{}{n-1}{m-1}_x
\genfrac{\{}{\}}{0pt}{}{m-1}{l-1}_y \widehat{c}_m(-x) c_{l-1}(y), \\
\frac{B_n(x) - (-1)^n}{n} & = \sum_{m=1}^n \sum_{l=1}^m (-1)^m \genfrac{\{}{\}}{0pt}{}{n-1}{m-1}_x
\genfrac{\{}{\}}{0pt}{}{m-1}{l-1}_y \widehat{c}_m(-x) \widehat{c}_{l-1}(-y),
\end{align*}
and
\begin{align*}
B_n(x) & = x^n - n\sum_{m=1}^n \sum_{l=1}^m \genfrac{\{}{\}}{0pt}{}{n-1}{m-1}_x
\genfrac{\{}{\}}{0pt}{}{m-1}{l-1}_y c_m \, c_{l-1}(y),  \\
B_n(x) & = x^n + n\sum_{m=1}^n \sum_{l=1}^m  (-1)^m \genfrac{\{}{\}}{0pt}{}{n-1}{m-1}_x
\genfrac{\{}{\}}{0pt}{}{m-1}{l-1}_y  c_m \, \widehat{c}_{l-1}(-y),  \\
B_n(x) & = (x-1)^n - n \sum_{m=1}^n \sum_{l=1}^m  \genfrac{\{}{\}}{0pt}{}{n-1}{m-1}_x
\genfrac{\{}{\}}{0pt}{}{m-1}{l-1}_y \widehat{c}_m  \, c_{l-1}(y),  \\
B_n(x) & = (x-1)^n + n \sum_{m=1}^n \sum_{l=1}^m (-1)^m \genfrac{\{}{\}}{0pt}{}{n-1}{m-1}_x
\genfrac{\{}{\}}{0pt}{}{m-1}{l-1}_y \widehat{c}_m  \, \widehat{c}_{l-1}(-y).
\end{align*}
\end{proposition}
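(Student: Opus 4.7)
The strategy is to combine each of the four single-sum identities already at our disposal, namely \eqref{th31} and \eqref{th32} from Theorem \ref{th:7} and \eqref{th41} and \eqref{th42} from Theorem \ref{th:8}, with one of the two inverted identities in \eqref{rem4}, interpreted as two alternative expressions for the scalar $1/m$.

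The first step is to rewrite the identities in \eqref{rem4} as representations of $1/m$. Shifting the indices via $n \mapsto m-1$, $m \mapsto l-1$ and renaming the parameter $x$ to $y$, the two relations in \eqref{rem4} give, for every integer $m \geq 1$ and every real $y$,
\begin{equation*}
\frac{1}{m} \;=\; \sum_{l=1}^m \genfrac{\{}{\}}{0pt}{}{m-1}{l-1}_y c_{l-1}(y) \qquad \text{and} \qquad \frac{1}{m} \;=\; (-1)^{m-1} \sum_{l=1}^m \genfrac{\{}{\}}{0pt}{}{m-1}{l-1}_y \widehat{c}_{l-1}(-y).
\end{equation*}
Note that both representations of $1/m$ are valid for \emph{arbitrary} $y$, which is exactly why a new free parameter $y$ can be introduced in every one of the eight formulas of the proposition.

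With these two identities in hand, each formula of Proposition \ref{prop:5} follows by a direct substitution into the appropriate single-sum formula from Theorems \ref{th:7} and \ref{th:8}. More concretely, inserting the $c_{l-1}(y)$-representation of $1/m$ into \eqref{th31} produces the first formula and into \eqref{th32} the third; inserting the $\widehat{c}_{l-1}(-y)$-representation (which carries the factor $(-1)^{m-1}$) into \eqref{th31} produces the second and into \eqref{th32} the fourth. The same four substitutions applied to \eqref{th41} and \eqref{th42} yield, respectively, the fifth/sixth and seventh/eighth formulas, with the constant terms $x^n$ and $(x-1)^n$ inherited unchanged from those two identities.

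The only genuine difficulty is bookkeeping of signs: each time the $\widehat{c}$-representation of $1/m$ is used, the factor $(-1)^{m-1} = -(-1)^m$ must be absorbed into the double sum, which flips the sign on the left-hand sides of the second and fourth identities (so that $-B_n(x)/n$ becomes $B_n(x)/n$, and $((-1)^n-B_n(x))/n$ becomes $(B_n(x)-(-1)^n)/n$) and turns the minus sign in front of $n$ in \eqref{th41} and \eqref{th42} into a plus sign in the sixth and eighth formulas. Aside from this sign tracking, no further computation is required.
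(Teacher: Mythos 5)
Your proposal is correct and follows exactly the route the paper indicates: the paper states (without detailed computation) that Proposition \ref{prop:5} follows from Theorems \ref{th:7} and \ref{th:8} combined with the identities in \eqref{rem4}, which is precisely your substitution of the two representations of $1/m$ (with shifted indices and free parameter $y$) into \eqref{th31}, \eqref{th32}, \eqref{th41}, and \eqref{th42}. Your sign bookkeeping for the $(-1)^{m-1}$ factor is also accurate, so the argument is complete.
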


\begin{remark}
By assigning the values $(0,0), (0,1), (1,0)$, or~$(1,1)$ to the corresponding variables $(x,y)$ in Propositions \ref{prop:4} and \ref{prop:5}, one can find numerous relationships between Bernoulli and Cauchy~numbers.
\end{remark}

\section{Formulas Involving Poly-Cauchy Polynomials, Generalized Stirling Numbers, Binomial Coefficients and~More}\label{sec:6}

For integers $n \geq 0$ and $k \geq 1$, Kamano and Komatsu~\cite{kamano} introduced poly-Cauchy polynomials of the first and second kind, $c_n^{(k)}(x)$ and $\widehat{c}_n^{(k)}(x)$, as~a generalization of classical Cauchy polynomials. These can be defined by integral formulas (see~\cite{kamano} (Sections 2 and 3) and~\cite{komatsu4} (Theorems 3 and 9))
\begin{equation}\label{defpol1}
c_n^{(k)}(x) = n! \int_{0}^{1} \ldots \int_{0}^{1} \binom{t_1 \cdots t_k - x}{n}
dt_1 \cdots dt_k,
\end{equation}
and
\begin{equation}\label{defpol2}
\widehat{c}_n^{(k)}(x) = n! \int_{0}^{1} \ldots \int_{0}^{1} \binom{x - t_1 \cdots t_k}{n}
dt_1 \cdots dt_k.
\end{equation}
{(Note that} $x$ is replaced by $-x$ in the original definition of poly-Cauchy polynomials set forth in~\cite{kamano}). Clearly, poly-Cauchy polynomials reduce to ordinary Cauchy polynomials when $k=1$. For~$x=0$, $c_n^{(k)} = c_n^{(k)}(0)$ and $\widehat{c}_n^{(k)} = \widehat{c}_n^{(k)}(0)$ are called, respectively, poly-Cauchy numbers of the first and second kind~\cite{komatsu}. Notice that for the general case where $k >1$, $c_n^{(k)}(x)$ is no longer equal to $\widehat{c}_n^{(k)}(1-x)$.

As an explicit example, for~$n=6$, the~poly-Cauchy polynomials of the first and second kind are given by
\begin{align*}
c_6^{(k)}(x)  = & -\frac{120}{2^k} +\frac{274}{3^k} -\frac{225}{4^k} +\frac{85}{5^k} -\frac{15}{6^k} +\frac{1}{7^k} \\
& + \left( 120 -\frac{548}{2^k} +\frac{675}{3^k} -\frac{340}{4^k} +\frac{75}{5^k} -\frac{6}{6^k} \right) x \\
& + \left( 274 -\frac{675}{2^k} +\frac{510}{3^k} -\frac{150}{4^k} +\frac{15}{5^k} \right) x^2
+ \left( 225 -\frac{340}{2^k} +\frac{150}{3^k} -\frac{20}{4^k} \right) x^3 \\
& + \left( 85 -\frac{75}{2^k} +\frac{15}{3^k} \right) x^4 + \left( 15 -\frac{6}{2^k}\right) x^5 + x^6, \\[-2mm]
\intertext{and}
\widehat{c}_6^{(k)}(x) = & +\frac{120}{2^k} +\frac{274}{3^k} +\frac{225}{4^k} +\frac{85}{5^k} +\frac{15}{6^k} +\frac{1}{7^k} \\
& - \left( 120 +\frac{548}{2^k} +\frac{675}{3^k} +\frac{340}{4^k} +\frac{75}{5^k} +\frac{6}{6^k} \right) x \\
& + \left( 274 +\frac{675}{2^k} +\frac{510}{3^k} +\frac{150}{4^k} +\frac{15}{5^k} \right) x^2
- \left( 225 +\frac{340}{2^k} +\frac{150}{3^k} +\frac{20}{4^k} \right) x^3 \\
& + \left( 85 +\frac{75}{2^k} +\frac{15}{3^k} \right) x^4 - \left( 15 +\frac{6}{2^k}\right) x^5 + x^6.
\end{align*}

In this section, we provide several formulas and identities for poly-Cauchy polynomials by generalizing the previous results from Sections~\ref{sec:2}, \ref{sec:3} and~\ref{sec:5}.

\subsection{Some Basic Formulas for Poly-Cauchy~Polynomials}

The following formulas generalize those in Proposition \ref{prop:1} to poly-Cauchy polynomials.
\begin{proposition}\label{prop:6}
For integers $n \geq 0$ and for arbitrary $x$, we have
\begin{equation}\label{pro41}
c_n^{(k)}(x) = \sum_{m=0}^n \frac{(-1)^{n-m}}{(m+1)^k} \genfrac{[}{]}{0pt}{}{n}{m}_x,
\end{equation}
and
\begin{equation}\label{pro42}
\widehat{c}_n^{(k)}(-x) = (-1)^n \sum_{m=0}^n \frac{1}{(m+1)^k} \genfrac{[}{]}{0pt}{}{n}{m}_x,
\end{equation}
where $\genfrac{[}{]}{0pt}{}{n}{m}_x$ is the polynomial defined in \eqref{gs1}.
\end{proposition}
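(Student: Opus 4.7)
The plan is to recycle the proof of Proposition~\ref{prop:1} essentially verbatim, replacing the single dummy variable $t$ of Adamchik's polynomial identity \eqref{th1p1} by the product $t_1 t_2 \cdots t_k$ and the single integral over $[0,1]$ by the $k$-fold integral over $[0,1]^k$ that appears in the definitions \eqref{defpol1}--\eqref{defpol2}. The only new ingredient beyond Proposition~\ref{prop:1} is the elementary identity
$$\int_0^1 \cdots \int_0^1 (t_1 t_2 \cdots t_k)^m \, dt_1 \cdots dt_k = \frac{1}{(m+1)^k},$$
which supplies the factor $1/(m+1)^k$ featured in both \eqref{pro41} and \eqref{pro42}.

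To prove \eqref{pro42}, I would start from \eqref{defpol2} and rewrite the integrand using the equality
$$n!\binom{x - t_1 \cdots t_k}{n} = (-1)^n \prod_{l=1}^n \bigl(t_1 \cdots t_k - (x+1) + l\bigr).$$
Substituting $t \to t_1 \cdots t_k$ and $x \to x+1$ in \eqref{th1p1} then expresses this product as $\sum_{m=0}^n P_{n+1,m}(x+1)(t_1 \cdots t_k)^m$. Integrating term-by-term over $[0,1]^k$ gives
$$\widehat{c}_n^{(k)}(x) = (-1)^n \sum_{m=0}^n \frac{P_{n+1,m}(x+1)}{(m+1)^k},$$
and the identification $P_{n+1,m}(x+1) = \genfrac{[}{]}{0pt}{}{n}{m}_{-x}$ already shown inside the proof of Proposition~\ref{prop:1} yields \eqref{pro42} after replacing $x$ by $-x$. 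For \eqref{pro41}, I would substitute $t \to -t_1 \cdots t_k$ and $x \to 1-x$ in \eqref{th1p1}: the left-hand side becomes $(-1)^n n!\binom{t_1 \cdots t_k - x}{n}$, while the right-hand side becomes $\sum_{m=0}^n (-1)^m P_{n+1,m}(1-x)(t_1 \cdots t_k)^m$. Integrating over $[0,1]^k$ and using $P_{n+1,m}(1-x) = \genfrac{[}{]}{0pt}{}{n}{m}_{x}$ (an immediate consequence of $P_{n+1,m}(-x) = \genfrac{[}{]}{0pt}{}{n}{m}_{x+1}$, also shown in the proof of Proposition~\ref{prop:1}) produces \eqref{pro41} with the correct sign $(-1)^{n-m}$.

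There is no substantive obstacle: once Proposition~\ref{prop:1} is in hand, the argument is a mechanical lift from the single-variable to the $k$-fold setting. The points requiring the most care are the bookkeeping of signs when swapping $\binom{x-t}{n}$ for a product of linear factors in $t$, together with the observation that, since \eqref{th1p1} is a polynomial identity in the formal variable $t$, it remains valid when $t$ is specialized pointwise to the product $t_1 \cdots t_k$ inside the integrand; the Fubini step to separate the $k$-fold integral into a product of one-dimensional integrals is immediate because each term is already a monomial in $t_1, \ldots, t_k$.
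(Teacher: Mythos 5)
Your proof is correct and follows essentially the same route as the paper: both substitute $t \mapsto t_1\cdots t_k$ (and the appropriate sign/shift of $x$) into Adamchik's polynomial identity \eqref{th1p1}, integrate termwise over $[0,1]^k$ to produce the factor $1/(m+1)^k$, and reuse the identification $P_{n+1,m}(x+1)=\genfrac{[}{]}{0pt}{}{n}{m}_{-x}$ established in the proof of Proposition~\ref{prop:1}. The only cosmetic difference is that you spell out the derivation of \eqref{pro41} via $t\to -t_1\cdots t_k$, $x\to 1-x$, whereas the paper leaves that half implicit by analogy with Proposition~\ref{prop:1}.
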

\begin{proof}
The above formulas can be proved in the same way as the formulas in Proposition~\ref{prop:1} once we make the identifications $t \equiv t_1 \cdots t_k$ and $dt \equiv dt_1 \ldots  dt_k$. So, writing \eqref{th1p1} in the~form
\begin{equation*}
\prod_{l=1}^{n} (t_1 \cdots t_k -x+l) = \sum_{m=0}^{n} P_{n+1,m}(x) \, t_1^m \cdots t_k^m,
\end{equation*}
and noting that
\begin{equation*}
\prod_{l=1}^{n} (t_1 \cdots t_k -x+l) = n! \binom{t_1 \cdots t_k -x+n}{n} = (-1)^n n!
\binom{x - t_1 \cdots t_k -1}{n},
\end{equation*}
it follows that
\begin{equation*}
\widehat{c}_n^{(k)}(x) = (-1)^n \sum_{m=0}^n P_{n+1,m}(x+1) \! \int_{0}^{1} t_1^m dt_1 \, \ldots
\int_{0}^{1} t_k^m dt_k = (-1)^n \sum_{m=0}^n \frac{P_{n+1,m}(x+1)}{(m+1)^k},
\end{equation*}
which amounts to \eqref{pro42} due to the relation $P_{n+1,m}(x+1) = \genfrac{[}{]}{0pt}{}{n}{m}_{-x}$.
\end{proof}

An immediate consequence of Proposition \ref{prop:6} is the following.
\begin{corollary}\label{col:2}
Let $c_n^{(k)}(x) = \sum_{i=0}^n c_{n,i}^{(k)} x^i$ and $\widehat{c}_n^{(k)}(x) = \sum_{i=0}^n \widehat{c}_{n,i}^{(k)} x^i$ be poly-Cauchy polynomials of the first and second kind, respectively. Then, for~$i = 0,1,\ldots,n$, we have
\begin{equation*}
c_{n,i}^{(k)} = (-1)^{n+i} \sum_{m=i}^n \frac{(-1)^m}{(m-i+1)^k} \binom{m}{i} \genfrac{[}{]}{0pt}{}{n}{m},
\end{equation*}
and
\begin{equation*}
\widehat{c}_{n,i}^{(k)} = (-1)^{n+i} \sum_{m=i}^n \frac{1}{(m-i+1)^k} \binom{m}{i} \genfrac{[}{]}{0pt}{}{n}{m}.
\end{equation*}
\end{corollary}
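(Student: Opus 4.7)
The plan is to proceed exactly as in Corollary \ref{col:1}, obtaining the explicit coefficients by substituting the series definitions of the generalized Stirling numbers directly into the expressions from Proposition \ref{prop:6}. For the first-kind formula, I would substitute the expansion \eqref{gs1} of $\genfrac{[}{]}{0pt}{}{n}{m}_x$ into \eqref{pro41}, producing a double sum whose inner summation index is the exponent of $x$. Explicitly, this gives
\begin{equation*}
c_n^{(k)}(x) = \sum_{m=0}^n \frac{(-1)^{n-m}}{(m+1)^k} \sum_{j=0}^{n-m} \binom{j+m}{m} \genfrac{[}{]}{0pt}{}{n}{j+m} x^j.
\end{equation*}
Swapping the order of summation and re-indexing by $l = m+j$ (so that $m$ running from $0$ to $n-i$ becomes $l$ running from $i$ to $n$) then collects the coefficient of $x^i$ in the required form; the identity $(-1)^{n-l+i} = (-1)^{n+i}(-1)^l$ produces the displayed sign.

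For the second-kind formula, I would first rewrite \eqref{pro42} (by changing $x \to -x$) as
\begin{equation*}
\widehat{c}_n^{(k)}(x) = (-1)^n \sum_{m=0}^n \frac{1}{(m+1)^k} \genfrac{[}{]}{0pt}{}{n}{m}_{-x},
\end{equation*}
and then apply the expansion \eqref{rem32} of $\genfrac{[}{]}{0pt}{}{n}{m}_{-x}$ in ordinary Stirling numbers. The same swap-and-reindex step collects the coefficient of $x^i$; here the extra factor of $(-1)^m$ present in the first-kind case is replaced by $(-1)^j$ coming from \eqref{rem32}, and after reindexing this produces the cleaner sign pattern $(-1)^{n+i}$ in front of a sum with no internal $(-1)^l$.

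There is no real obstacle here: the computation is a routine interchange of sums, and the $k=1$ specialization recovers Corollary \ref{col:1} (Formulas \eqref{coef1} and \eqref{coef2}), which the authors themselves dispatched in a single sentence. The only care needed is bookkeeping of signs and of the summation range after reindexing.
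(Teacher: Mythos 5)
Your proposal is correct and coincides with the paper's own (implicit) argument: the paper presents Corollary \ref{col:2} as an immediate consequence of Proposition \ref{prop:6}, obtained exactly as in Corollary \ref{col:1} by substituting \eqref{gs1} into \eqref{pro41} and \eqref{rem32} into the rewritten form of \eqref{pro42}, then collecting the coefficient of $x^i$. Your sign bookkeeping and reindexing $l=m+j$ are accurate, so nothing further is needed.
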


\begin{remark}
The polynomials $c_n^{(k)}(x)$ and $\widehat{c}_n^{(k)}(x)$ defined in Corollary \ref{col:2} can be expressed equivalently as
\begin{equation}\label{back1}
c_n^{(k)}(x) = \sum_{m=0}^n (-1)^{n-m} \genfrac{[}{]}{0pt}{}{n}{m} \sum_{i=0}^m \binom{m}{i} \frac{(-x)^i}{(m-i+1)^k},
\end{equation}
and
\begin{equation}\label{back2}
\widehat{c}_n^{(k)}(x) = (-1)^n \sum_{m=0}^n \genfrac{[}{]}{0pt}{}{n}{m} \sum_{i=0}^m \binom{m}{i} \frac{(-x)^i}{(m-i+1)^k},
\end{equation}
in accordance with~\cite{kamano} (Theorems 1 and 4) and~\cite{komatsu4} (Theorems 1 and 7).
\end{remark}

Likewise, several of the other formulas we have previously obtained for Cauchy polynomials can be extended to poly-Cauchy polynomials by simply changing $c_n(x)$ to $ c_n^{(k)}(x)$, $\widehat{c}_n(x)$ to $\widehat{c}_n^{(k)}(x)$, and~by raising to the $k$-th power the appropriate denominators (if any). Next, we write down the generalization for poly-Cauchy polynomials of Equations~\eqref{diff1}, \eqref{diff2}, \eqref{whit1} and \eqref{whit2}, Theorem \ref{th:2}, Equations~\eqref{symm1} and \eqref{symm2}, and~Theorems \ref{th:3}, \ref{th:5} and~\ref{th:6}. The~resulting formulas are, respectively, as~follows:
\begingroup
\allowdisplaybreaks
\begin{align}
c^{(k)}_{n}( & x +1) - c^{(k)}_{n}(x) = -n c^{(k)}_{n-1}(x+1), \notag \\[1mm]
\widehat{c}^{(k)}_{n}( & x +1) - \widehat{c}^{(k)}_{n}(x) = n \widehat{c}^{(k)}_{n-1}(x), \notag \\
c_n^{(k)}\left( \frac{r}{m} \right) & = \sum_{l=0}^n \frac{(-1)^{n-l}}{(l+1)^k} \,
\frac{w_{m,r}(n,l)}{m^{n-l}}, \notag \\
\widehat{c}_n^{(k)}\left( -\frac{r}{m} \right) & = (-1)^n  \sum_{l=0}^n \frac{1}{(l+1)^k} \,
\frac{w_{m,r}(n,l)}{m^{n-l}}, \notag \\
c_n^{(k)}(x) & = (-1)^n n! \sum_{m=0}^n \frac{\widehat{c}_m^{(k)}}{m!} \binom{x+n-1}{n-m}, \label{symm5} \\
\widehat{c}_n^{(k)}(x) & = n! \sum_{m=0}^n \frac{(-1)^m c_m^{(k)}}{m!} \binom{x-m}{n-m}, \label{symm6} \\
c_n^{(k)}(x) & = n! \sum_{m=0}^n \frac{c^{(k)}_m}{m!} \binom{-x}{n-m}
= \sum_{m=0}^n (-1)^m c_{n-m}^{(k)} \binom{n}{m} x^{(m)}, \label{symm7} \\
\widehat{c}^{(k)}_n(x) & = n! \sum_{m=0}^n \frac{\widehat{c}^{(k)}_m}{m!} \binom{x}{n-m}
= \sum_{m=0}^n \widehat{c}^{(k)}_{n-m} \binom{n}{m} (x)_m, \label{symm8} \\
c_{n+1}^{(k)}(x) & = -(n+x)c_n^{(k)}(x) + (-1)^{n+1} n! \sum_{m=0}^n
\frac{\widehat{c}_{m+1}^{(k)}}{m!} \binom{x+n}{n-m}, \notag \\
\widehat{c}_{n+1}^{(k)}(x) & = -(x-n)\widehat{c}_n^{(k)}(x) - n! \sum_{m=0}^n
\frac{(-1)^m \, c^{(k)}_{m+1}}{m!} \binom{x-m-1}{n-m}, \notag \\
\frac{d^i c^{(k)}_n(x)}{d x^i} & = (-1)^i i! \sum_{m=i}^n \binom{n}{m} \binom{m}{i}
\frac{B_{m-i}^{(n+1)}(1-x)}{(n+1-m)^k}, \label{genk1} \\
\frac{d^i \widehat{c}^{(k)}_n(x)}{d x^i} & = i! \sum_{m=i}^n (-1)^{n-m} \binom{n}{m} \binom{m}{i}
\frac{B_{m-i}^{(n+1)}(x+1)}{(n+1-m)^k}, \label{genk2} \\[-2mm]
\intertext{and}
\frac{d^i c_n^{(k)}(x)}{d x^i} & = i! \sum_{m=i}^n (-1)^m c^{(k)}_{n-m} \binom{n}{m}
\genfrac{[}{]}{0pt}{}{m}{i}_x, \notag \\
\frac{d^i \widehat{c}^{(k)}_n(x)}{d x^i} & = (-1)^i i! \sum_{m=i}^n (-1)^m \, \widehat{c}_{n-m}^{(k)}
\binom{n}{m} \genfrac{[}{]}{0pt}{}{m}{i}_{-x}, \notag
\end{align}
\endgroup
where, in~Equation~\eqref{symm7}, $x^{(m)}$ denotes the rising factorial $x^{(m)} = x(x+1)\ldots (x+m-1)$ for $m \geq 1$ and $x^{(0)} =1$.

Additionally, as~can be shown, the~following identities
\begin{equation} \label{korec1}
\sum_{m=r}^n \genfrac{\{}{\}}{0pt}{}{n-r}{m-r}_r c_{m-s}^{(k)}(s) = \sum_{l=s}^r \frac{(-1)^{r-l}}
{(n+l-r-s+1)^k} \genfrac{[}{]}{0pt}{}{r-s}{l-s}_s,
\end{equation}
and
\begin{equation} \label{korec2}
\sum_{m=r}^n \genfrac{\{}{\}}{0pt}{}{n-r}{m-r}_r \widehat{c}_{m-s}^{(k)}(-s) = (-1)^{n-s} \sum_{l=s}^r
\frac{1}{(n+l-r-s+1)^k} \genfrac{[}{]}{0pt}{}{r-s}{l-s}_s,
\end{equation}
hold for any integers $s$, $r$, and~$n$ fulfilling $0 \leq s \leq r \leq n$.

The following are a few remarks regarding the foregoing~equations.
\begin{itemize}
\item
By inverting \eqref{pro41} and \eqref{pro42} we obtain (cf.\ \cite{cenkci}, Theorem 3.3)
\begin{equation*}
\sum_{m=0}^n \genfrac{\{}{\}}{0pt}{}{n}{m}_x c_m^{(k)}(x) = \frac{1}{(n+1)^k}
\quad \, \text{and}\quad \sum_{m=0}^n \genfrac{\{}{\}}{0pt}{}{n}{m}_x \widehat{c}_m^{(k)}(-x)
= \frac{(-1)^n}{(n+1)^k}.
\end{equation*}

\item
From Equations~\eqref{symm5}--\eqref{symm8}, we find the identities
\begin{align*}
c_n^{(k)}(1) & = (-1)^n n! \sum_{m=0}^n \binom{n}{m} \frac{\widehat{c}_m^{(k)}}{m!}
= (-1)^n n! \sum_{m=0}^n \frac{(-1)^m \, c_m^{(k)}}{m!}, \\
\widehat{c}_n^{(k)}(-1) & = (-1)^n n! \sum_{m=0}^n \binom{n}{m} \frac{c_m^{(k)}}{m!}
= (-1)^n n! \sum_{m=0}^n \frac{(-1)^m \, \widehat{c}_m^{(k)}}{m!},
\end{align*}
along with
\begin{equation*}
c_n^{(k)} = (-1)^n \sum_{m=0}^n L(n,m) \, \widehat{c}_m^{(k)}
\quad\text{and}\quad \widehat{c}_n^{(k)} = (-1)^n \sum_{m=0}^n L(n,m) \, c_m^{(k)},
\end{equation*}
where $L(n,m) = \frac{n!}{m!} \binom{n-1}{m-1}$ are the (unsigned) Lah numbers~\cite{lah}.

\item
The expressions for $c_n^{(k)}(x)$ and $\widehat{c}_n^{(k)}(x)$ on the rightmost sides of \eqref{symm7} and \eqref{symm8} were previously obtained via umbral calculus in~\cite{kim3} (Theorem 2.10) and~\cite{kim4} (Theorem 7), respectively. On~the other hand, taking $i=0$ in \eqref{genk1} and \eqref{genk2} yields
\begin{equation*}
c^{(k)}_n(x) = \sum_{m=0}^n \binom{n}{m} \frac{B_{m}^{(n+1)}(1-x)}{(n+1-m)^k},
\end{equation*}
(\cite{komatsu4}, Theorem 12), and~\begin{equation*}
\widehat{c}^{(k)}_n(x) = \sum_{m=0}^n (-1)^{n-m} \binom{n}{m} \frac{B_{m}^{(n+1)}(x+1)}{(n+1-m)^k},
\end{equation*}
generalizing Formulas \eqref{gbp1} and \eqref{gbp2} to poly-Cauchy case.

\item
Letting $s=0$ in \eqref{korec1} and \eqref{korec2} leads to
\begin{equation*}
\sum_{m=0}^n \genfrac{\{}{\}}{0pt}{}{n}{m}_r c_{m+r}^{(k)} = \sum_{l=0}^r
\frac{(-1)^{r-l}}{(n+l+1)^k} \genfrac{[}{]}{0pt}{}{r}{l},
\end{equation*}
and
\begin{equation*}
\sum_{m=0}^n \genfrac{\{}{\}}{0pt}{}{n}{m}_r \widehat{c}_{m+r}^{(k)} = (-1)^{n+r}
\sum_{l=0}^r \frac{1}{(n+l+1)^k} \genfrac{[}{]}{0pt}{}{r}{l},
\end{equation*}
which are equivalent, respectively, to~the identities in Theorems 1 and 3 of~\cite{komatsu8}.

\end{itemize}

The following proposition extends the integration formula in \eqref{int1} to poly-Cauchy~polynomials.
\begin{proposition}\label{prop:7}
For integers $n \geq 0$ and $k \geq 1$, we have
\begin{equation}\label{int2}
\int_0^1 c_n^{(k)}(x) dx = c_n -n \sum_{j=1}^k c_n^{(j)},
\end{equation}
and
\begin{equation}\label{int3}
\int_0^1 \widehat{c}_n^{(k)}(x) dx = \widehat{c}_n -n \sum_{j=1}^k \big( \widehat{c}_n^{(j)}
+(n-1)\widehat{c}_{n-1}^{(j)} \big).
\end{equation}
\end{proposition}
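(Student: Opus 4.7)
\emph{Proof plan.} I would prove both identities in parallel through exponential generating functions, built around the polylogarithm factorial series $\mathrm{Lif}_k(z) = \sum_{n \geq 0} z^n/(n!(n+1)^k)$.

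First, I would derive the generating functions of the poly-Cauchy polynomials themselves. Starting from~\eqref{defpol1} and~\eqref{defpol2}, interchanging sum and integral via $\sum_n \binom{y}{n}t^n = (1+t)^y$ and evaluating $\int_{[0,1]^k} e^{\pm u\,t_1 \cdots t_k}\,dt_1 \cdots dt_k = \mathrm{Lif}_k(\pm u)$ with $u = \ln(1+t)$, one obtains
\[
\sum_{n \geq 0} c_n^{(k)}(x)\frac{t^n}{n!} = \frac{\mathrm{Lif}_k(u)}{(1+t)^x},\qquad \sum_{n \geq 0} \widehat{c}_n^{(k)}(x)\frac{t^n}{n!} = (1+t)^x \mathrm{Lif}_k(-u).
\]
Integrating over $x \in [0,1]$ and using $\int_0^1 (1+t)^{\pm x}\,dx = \mathrm{Lif}_1(\pm u)$ gives the generating functions of the left-hand sides of~\eqref{int2} and~\eqref{int3}:
\[
\sum_n \int_0^1 c_n^{(k)}(x)\,dx\,\frac{t^n}{n!} = \mathrm{Lif}_k(u)\mathrm{Lif}_1(-u),\quad \sum_n \int_0^1 \widehat{c}_n^{(k)}(x)\,dx\,\frac{t^n}{n!} = \mathrm{Lif}_k(-u)\mathrm{Lif}_1(u).
\]

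Next I would translate the right-hand sides of~\eqref{int2} and~\eqref{int3} into the same framework. For~\eqref{int2}, $\sum_n n c_n^{(j)} t^n/n! = t\frac{d}{dt}\mathrm{Lif}_j(u) = (1-e^{-u})\mathrm{Lif}_j'(u)$. For~\eqref{int3}, the shifted term contributes $\sum_n n(n-1)\widehat{c}_{n-1}^{(j)} t^n/n! = t^2 \frac{d}{dt}\mathrm{Lif}_j(-u)$, so that the combined generating function of $n\widehat{c}_n^{(j)} + n(n-1)\widehat{c}_{n-1}^{(j)}$ reduces to $t(1+t)\frac{d}{dt}\mathrm{Lif}_j(-u) = -t\,\mathrm{Lif}_j'(-u) = (1-e^u)\mathrm{Lif}_j'(-u)$, the cancellation $(1+t)\frac{d(-u)}{dt} = -1$ doing the essential simplification. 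The classical recurrence $(z\mathrm{Lif}_k(z))' = \mathrm{Lif}_{k-1}(z)$, rewritten as $\mathrm{Lif}_j'(z) = (\mathrm{Lif}_{j-1}(z) - \mathrm{Lif}_j(z))/z$, inserts a factor $\mathrm{Lif}_1(\mp u)$ into each expression above, after which summation over $j = 1, \dots, k$ telescopes to $\mathrm{Lif}_1(\mp u)(e^{\pm u} - \mathrm{Lif}_k(\pm u))$. The final collapse rests on $\mathrm{Lif}_1(\pm u)\,e^{\mp u} = \mathrm{Lif}_1(\mp u)$, a rewriting of $e^{\pm u} - 1 = \pm u\,\mathrm{Lif}_1(\pm u)$, which cancels the $e^{\pm u}$ contribution and leaves exactly $\mathrm{Lif}_1(\mp u)\mathrm{Lif}_k(\pm u)$, matching the left-hand sides.

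The main obstacle I anticipate is step three for~\eqref{int3}: one must recognise that the shifted index in $(n-1)\widehat{c}_{n-1}^{(j)}$ contributes $t^2 F'(t)$ with $F(t) = \mathrm{Lif}_j(-u)$, so that when combined with the $tF'(t)$ contribution from $\widehat{c}_n^{(j)}$ it produces the factor $t(1+t)F'(t)$. Only then does the chain-rule denominator $1+t$ cancel cleanly, after which the telescoping argument becomes perfectly symmetric with the simpler first identity, and the calculation reduces to routine algebra.
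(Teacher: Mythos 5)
Your proof is correct, and it takes a genuinely different route from the paper. The paper's proof is much shorter but leans on the literature: it integrates the expansions \eqref{symm7} and \eqref{symm8} term by term, using $\int_0^1\binom{-x}{n-m}dx = \widehat{c}_{n-m}/(n-m)!$ and $\int_0^1\binom{x}{n-m}dx = c_{n-m}/(n-m)!$, so that the left-hand sides become the binomial convolutions $\sum_{m}\binom{n}{m}c_m^{(k)}\widehat{c}_{n-m}$ and $\sum_{m}\binom{n}{m}\widehat{c}_m^{(k)}c_{n-m}$, and it then simply cites Komatsu's known closed forms for these convolutions (Equations (99) and (89) of the reference \cite{komatsu7}) to reach \eqref{int2} and \eqref{int3}. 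Your generating-function argument is self-contained: the product $\mathrm{Lif}_k(u)\mathrm{Lif}_1(-u)$ you obtain by integrating the polynomial generating function is exactly the exponential generating function of the paper's intermediate convolution, so the two proofs pass through the same object, but where the paper stops and cites Komatsu, your telescoping computation with $\mathrm{Lif}_j'(z) = (\mathrm{Lif}_{j-1}(z)-\mathrm{Lif}_j(z))/z$ and the collapse $\mathrm{Lif}_1(\pm u)e^{\mp u} = \mathrm{Lif}_1(\mp u)$ actually re-derives those cited identities from scratch. All your intermediate claims check out, including the one you flag as delicate: the shifted term indeed contributes $t^2\frac{d}{dt}\mathrm{Lif}_j(-u)$, and the factor $t(1+t)$ cancels against the chain-rule derivative $\frac{d(-u)}{dt}=-\frac{1}{1+t}$ to leave $-t\,\mathrm{Lif}_j'(-u)=(1-e^u)\mathrm{Lif}_j'(-u)$, which is precisely the asymmetry that produces the extra $(n-1)\widehat{c}_{n-1}^{(j)}$ term in \eqref{int3}. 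What each approach buys: the paper's is economical given the existing literature; yours is longer but independent of it, proves the Komatsu convolution identities as a byproduct, and makes transparent why the two formulas \eqref{int2} and \eqref{int3} differ in shape.
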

\begin{proof}
For Formula \eqref{int2}, integrate \eqref{symm7} to obtain
\begin{align*}
\int_0^1 c_n^{(k)}(x) dx & = n! \sum_{m=0}^n \frac{c^{(k)}_m}{m!} \int_0^1 \binom{-x}{n-m} dx \\
& = \sum_{m=0}^n \binom{n}{m} c^{(k)}_m \, \widehat{c}_{n-m} = c_n -n \sum_{j=1}^k c_n^{(j)},
\end{align*}
where, in~the last step, we have used the identity in~\cite{komatsu7} (Equation~(99)). For~Formula \eqref{int3}, integrate \eqref{symm8} to obtain
\begin{align*}
\int_0^1 \widehat{c}_n^{(k)}(x) dx & = n! \sum_{m=0}^n \frac{\widehat{c}^{(k)}_m}{m!} \int_0^1 \binom{x}{n-m} dx \\
& = \sum_{m=0}^n \binom{n}{m} \widehat{c}^{(k)}_m \, c_{n-m} = \widehat{c}_n -n \sum_{j=1}^k
\big( \widehat{c}_n^{(j)} +(n-1)\widehat{c}_{n-1}^{(j)} \big),
\end{align*}
where, in~the last step, we have used the identity in~\cite{komatsu7} (Equation~(89)).
\end{proof}

On the other hand, the~generalization of Proposition \ref{prop:4} for poly-Cauchy polynomials can be accomplished by means of the corresponding poly-Bernoulli polynomials $\mathbb{B}_n^{(k)}(x)$~\mbox{\cite{coppo,bayad}}. These are generalizations of the so-called poly-Bernoulli numbers $\mathbb{B}_n^{(k)}$ introduced by Kaneko~\cite{kaneko}, where $\mathbb{B}_n^{(k)}= \mathbb{B}_n^{(k)}(0)$. Specifically, by~defining $\mathbb{B}_n^{(k)}(x)$ as
\begin{equation*}
\mathbb{B}_n^{(k)}(x) = (-1)^n \sum_{m=0}^n \frac{(-1)^m \, m!}{(m+1)^k} \genfrac{\{}{\}}{0pt}{}{n}{m}_x,
\end{equation*}
it can be shown that (cf.\ \cite{cenkci} (Theorem 3.4))
\begin{align*}
\mathbb{B}_n^{(k)}(x) & = (-1)^n \sum_{m=0}^n \sum_{l=0}^m  (-1)^{m} m! \genfrac{\{}{\}}{0pt}{}{n}{m}_x
\genfrac{\{}{\}}{0pt}{}{m}{l}_y  c_{l}^{(k)}(y), \\
\mathbb{B}_n^{(k)}(x) & = (-1)^n \sum_{m=0}^n \sum_{l=0}^m  m! \genfrac{\{}{\}}{0pt}{}{n}{m}_x
\genfrac{\{}{\}}{0pt}{}{m}{l}_y \widehat{c}_{l}^{(k)}(-y), \\
c_n^{(k)}(x) & = (-1)^n \sum_{m=0}^n \sum_{l=0}^m  \frac{(-1)^{m}}{m!} \genfrac{[}{]}{0pt}{}{n}{m}_x
\genfrac{[}{]}{0pt}{}{m}{l}_y  \mathbb{B}_{l}^{(k)}(y), \\
\widehat{c}_n^{(k)}(-x) & = (-1)^n \sum_{m=0}^n \sum_{l=0}^m  \frac{1}{m!} \genfrac{[}{]}{0pt}{}{n}{m}_x
\genfrac{[}{]}{0pt}{}{m}{l}_y \mathbb{B}_{l}^{(k)}(y),
\end{align*}
which hold for all integers $n \geq 0$, $k \geq 1$, and~for arbitrary $x$ and $y$.

On the other hand, Komatsu and Luca~\cite{luca} (p.\ 105) (see also~\cite{komatsu9}) defined different poly-Bernoulli polynomials by
\begin{equation*}
\mathbf{B}_n^{(k)}(x) = (-1)^n \sum_{m=0}^n (-1)^m m! \genfrac{\{}{\}}{0pt}{}{n}{m}
\sum_{i=0}^m \binom{m}{i} \frac{(-x)^i}{(m-i+1)^k}.
\end{equation*}
{As it} turns out, the~corresponding relationships between $\mathbf{B}_n^{(k)}(x)$ and poly-Cauchy polynomials are given by (cf.\ \cite{luca}, Theorem 4.1)
\begin{equation}\label{inform}
\begin{split}
\mathbf{B}_n^{(k)}(x) & = (-1)^n \sum_{m=0}^n \sum_{l=0}^m  (-1)^{m} m! \genfrac{\{}{\}}{0pt}{}{n}{m}
\genfrac{\{}{\}}{0pt}{}{m}{l} c_{l}^{(k)}(x), \\
\mathbf{B}_n^{(k)}(x) & = (-1)^n \sum_{m=0}^n \sum_{l=0}^m  m! \genfrac{\{}{\}}{0pt}{}{n}{m}
\genfrac{\{}{\}}{0pt}{}{m}{l} \widehat{c}_{l}^{(k)}(x), \\
c_n^{(k)}(x) & = (-1)^n \sum_{m=0}^n \sum_{l=0}^m  \frac{(-1)^{m}}{m!} \genfrac{[}{]}{0pt}{}{n}{m}
\genfrac{[}{]}{0pt}{}{m}{l}  \mathbf{B}_{l}^{(k)}(x), \\
\widehat{c}_n^{(k)}(x) & = (-1)^n \sum_{m=0}^n \sum_{l=0}^m  \frac{1}{m!} \genfrac{[}{]}{0pt}{}{n}{m}
\genfrac{[}{]}{0pt}{}{m}{l} \mathbf{B}_{l}^{(k)}(x),
\end{split}
\end{equation}
which hold for all integers $n \geq 0$, $k \geq 1$, and~for arbitrary $x$.

\subsection{Additional Formulas for Poly-Cauchy~Polynomials}

The following formulas for $c_{n}^{(k)}(x)$ and $\widehat{c}_{n}^{(k)}(x)$ are the generalizations for poly-Cauchy polynomials of the formulas for $c_{n}(x)$ and $\widehat{c}_{n}(x)$ given in Theorem \ref{th:1}.
\begin{theorem}\label{th:9}
For integers $n \geq 1$ and for arbitrary $x$, we have
\begin{equation}\label{poly1}
c_{n}^{(k)}(x) = \delta_{n,1} + (-1)^{n} n \sum_{m=1}^n \frac{1}{m} \genfrac{[}{]}{0pt}{}{n-1}{m-1}
\sum_{j=1}^m \binom{m}{j} B_{m-j} \, \mathbb{C}_j^{(k)}(x+1),
\end{equation}
and
\begin{equation}\label{poly2}
\widehat{c}_{n}^{(k)}(x) = \delta_{n,1} + (-1)^{n} n \sum_{m=1}^n \frac{1}{m} \genfrac{[}{]}{0pt}{}{n-1}{m-1}
\sum_{j=1}^m (-1)^j \binom{m}{j} B_{m-j} \, \mathbb{C}_j^{(k)}(x-1),
\end{equation}
where $\mathbb{C}_j^{(k)}(x)$ is the polynomial in $x$ of degree $j$ defined by
\begin{equation}\label{def1}
\mathbb{C}_j^{(k)}(x) = \begin{cases}
  1, & \text{for $j=0$;} \\
  \displaystyle{\sum_{i=0}^{j} \frac{(-1)^i}{(i+1)^k} \binom{j}{i} x^{j-i}},
& \text{for $j \geq 1$.}
\end{cases}
\end{equation}
\end{theorem}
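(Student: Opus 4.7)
My plan is to mimic the argument used in Theorem~\ref{th:1} but with the single variable of integration replaced by the $k$-tuple $(t_1,\dots,t_k)$, and to make systematic use of the fact that
\[
\mathbb{C}_j^{(k)}(y) \;=\; \int_{0}^{1}\!\cdots\!\int_{0}^{1}\bigl(y - t_1\cdots t_k\bigr)^{j}\,dt_1\cdots dt_k,
\]
which is immediate from the binomial expansion of $(y-t_1\cdots t_k)^j$ together with $\int_0^1\!\cdots\!\int_0^1 (t_1\cdots t_k)^i dt_1\cdots dt_k = 1/(i+1)^k$.

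I would start from the same auxiliary identity used in the proof of Theorem~\ref{th:1}, namely
\[
n!\binom{x+1}{n+1} \;=\; \delta_{n,0} + \sum_{m=0}^{n}(-1)^{n-m}\genfrac{[}{]}{0pt}{}{n}{m}S_m(x),
\]
and apply the substitution $n\to n-1$, $x\to t_1\cdots t_k - s - 1$ (respectively $x\to s - t_1\cdots t_k - 1$ for the $\widehat{c}_n^{(k)}$ case). Multiplying by $n$ and integrating over $[0,1]^k$, the left-hand side turns into $c_n^{(k)}(s)$ (resp.\ $\widehat{c}_n^{(k)}(s)$) thanks to \eqref{defpol1}--\eqref{defpol2}, yielding
\[
c_n^{(k)}(s) \;=\; \delta_{n,1} + n\sum_{m=0}^{n-1}(-1)^{n-1-m}\genfrac{[}{]}{0pt}{}{n-1}{m}\!\int_0^1\!\!\cdots\!\!\int_0^1 S_m(t_1\cdots t_k-s-1)\,dt_1\cdots dt_k,
\]
and analogously for $\widehat{c}_n^{(k)}(s)$.

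The core of the argument is the evaluation of the inner multi-integral. Using $S_m(y-1) = \tfrac{1}{m+1}(B_{m+1}(y)-B_{m+1}(1))$ and the Bernoulli addition formula $B_n(u+v)=\sum_{j=0}^n \binom{n}{j}B_j(u)v^{n-j}$ with $u=1$ and $v = s+1 - t_1\cdots t_k$ (resp.\ $v=s-1-t_1\cdots t_k$), I rewrite $B_{m+1}(t_1\cdots t_k - s)$ (resp.\ $B_{m+1}(s - t_1\cdots t_k)$) as a linear combination of powers of $s+1-t_1\cdots t_k$ (resp.\ $s-1-t_1\cdots t_k$). Integration then produces $\mathbb{C}_l^{(k)}(s+1)$ (resp.\ $\mathbb{C}_l^{(k)}(s-1)$). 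After invoking $B_j(1)=(-1)^j B_j$, the $l=0$ contribution in the resulting sum cancels exactly against the $-B_{m+1}(1)$ term, leaving
\[
\int_0^1\!\!\cdots\!\!\int_0^1 S_m(t_1\cdots t_k - s - 1)\,dt_1\cdots dt_k \;=\; \frac{(-1)^{m+1}}{m+1}\sum_{l=1}^{m+1}\binom{m+1}{l}B_{m+1-l}\,\mathbb{C}_l^{(k)}(s+1),
\]
and a parallel expression with an extra $(-1)^l$ for the $\widehat{c}_n^{(k)}$ case.

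Plugging this back, the prefactor $(-1)^{n-1-m}(-1)^{m+1}=(-1)^n$ emerges cleanly, and re-indexing $m\to m-1$, $l\to j$ yields exactly \eqref{poly1} and \eqref{poly2}. The only delicate step is the bookkeeping at $l=0$, where the cancellation against $B_{m+1}(1)$ is essential for the sum to start at $j=1$; getting the signs right in the Bernoulli addition formula (so that the relevant power is $(s\pm 1 - t_1\cdots t_k)^{m+1-j}$ rather than its opposite) is the other point to handle carefully. Both formulas can then be verified at $n=1$ against $c_1^{(k)}(x)=1/2^k - x$ and $\widehat{c}_1^{(k)}(x) = x - 1/2^k$ as a consistency check.
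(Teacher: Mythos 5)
Your proposal is correct and is essentially the paper's own proof: the paper likewise takes the identity underlying Theorem~\ref{th:1}, substitutes $x \to t_1\cdots t_k - x - 1$, integrates over $[0,1]^k$ to obtain its Equation~\eqref{poly4}, and then integrates a Bernoulli-number expansion of $S_{m-1}(t_1\cdots t_k - x - 1)$ term by term, the multiple integrals producing exactly the polynomials $\mathbb{C}_j^{(k)}(x\pm 1)$. The only cosmetic difference is that the paper quotes the closed-form expansion $S_{m-1}(x)=\frac{1}{m}\sum_{j=1}^m(-1)^{m-j}\binom{m}{j}B_{m-j}\,x^j$ from~\cite{wu}, whereas you re-derive that expansion from \eqref{defb} and the Bernoulli addition theorem, your $l=0$ cancellation against $B_{m+1}(1)$ being precisely the reason the quoted formula (and hence the sum in \eqref{poly1}--\eqref{poly2}) starts at $j=1$.
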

\begin{proof}
We prove only \eqref{poly1}, but~the proof of \eqref{poly2} is similar. From~\eqref{poly3}, we see that, for~$n \geq 1$,
\begin{equation}\label{poly4}
\frac{c_{n}^{(k)}(x)}{n} = \delta_{n,1} + \sum_{m=1}^n (-1)^{n-m} \genfrac{[}{]}{0pt}{}{n-1}{m-1}
\int_{0}^{1} \ldots \int_{0}^{1} S_{m-1}(t_1 \cdots t_k -x-1) dt_1 \cdots dt_k.
\end{equation}
{Now,} invoking the well-known formula for the power sum polynomials (see, e.g.,~\cite{wu})
\begin{equation*}
S_{m-1}(x) = \frac{1}{m} \sum_{j=1}^m (-1)^{m-j} \binom{m}{j} B_{m-j} x^j, \quad m \geq 1,
\end{equation*}
we have
\begin{align*}
S_{m-1}(t_1 \cdots t_k -x-1) & = \sum_{j=1}^{m} \frac{(-1)^{m-j}}{m} \binom{m}{j}
B_{m-j} \times \big(t_1 \cdots t_k -x-1 \big)^j \\
& = \frac{(-1)^m}{m} \sum_{j=1}^{m} \binom{m}{j} B_{m-j} \sum_{i=0}^{j} (-1)^i
\binom{j}{i} (x+1)^{j-i} \big(t_1 \cdots t_k \big)^i,
\end{align*}
and then
\begin{equation*}
\int_{0}^{1} \ldots \int_{0}^{1} S_{m-1}(t_1 \cdots t_k -x-1) dt_1 \cdots dt_k
= \frac{(-1)^m}{m} \sum_{j=1}^{m} \binom{m}{j} B_{m-j} \, \mathbb{C}_j^{(k)}(x+1).
\end{equation*}
{Thus,} substituting the above integration formula into \eqref{poly4}, we obtain \eqref{poly1}.
\end{proof}

\begin{remark}
Thanks to \eqref{exp3}, Equations~\eqref{poly1} and \eqref{poly2} can be rewritten as
\begin{equation}\label{poly5}
c_{n}^{(k)}(x) = c_n + (-1)^{n} n \sum_{m=1}^n \frac{1}{m} \genfrac{[}{]}{0pt}{}{n-1}{m-1}
\sum_{j=0}^m \binom{m}{j} B_{m-j} \, \mathbb{C}_j^{(k)}(x+1),
\end{equation}
and
\begin{equation}\label{poly6}
\widehat{c}_{n}^{(k)}(x) = c_n + (-1)^{n} n \sum_{m=1}^n \frac{1}{m} \genfrac{[}{]}{0pt}{}{n-1}{m-1}
\sum_{j=0}^m (-1)^j \binom{m}{j} B_{m-j} \, \mathbb{C}_j^{(k)}(x-1).
\end{equation}
{Hence,} comparing \eqref{poly5} (for $k=1$) with \eqref{exp1}, and~\eqref{poly6} (for $k=1$) with \eqref{exp2}, we deduce that
\begin{equation}\label{idc1}
\sum_{j=0}^m \binom{m}{j} B_{m-j} \, \mathbb{C}_j^{(1)}(x+1) = x^m,
\end{equation}
and
\begin{equation}\label{idc2}
\sum_{j=0}^m (-1)^{m-j} \binom{m}{j} B_{m-j} \, \mathbb{C}_j^{(1)}(x) = x^m,
\end{equation}
which hold for arbitrary $x$ and for any non-negative integer $m$.
\end{remark}

The following theorem is the generalization of Equations~\eqref{inv2} and \eqref{inv3} to poly-Cauchy polynomials. We omit the proof for the sake of brevity.
\begin{theorem}\label{th:10}
For integers $n \geq 1$ and arbitrary $x$, we have
\begin{equation}\label{poly7}
-\frac{c_{n}^{(k)}(x)}{n} = \sum_{m=1}^n \frac{(-1)^{n}}{m} \genfrac{[}{]}{0pt}{}{n-1}{m-1}_x
\bigg( (-1)^m B_{m}(x) - \sum_{j=0}^m \binom{m}{j} B_{m-j} \, \mathbb{C}_{j}^{(k)}(1) \bigg),
\end{equation}
and
\begin{equation}\label{poly8}
-\frac{\widehat{c}_{n}^{(k)}(-x)}{n} = \sum_{m=1}^n \frac{(-1)^{n}}{m} \genfrac{[}{]}{0pt}{}{n-1}{m-1}_x
\bigg( (-1)^m B_{m}(x) - \sum_{j=0}^m (-1)^j \binom{m}{j} B_{m-j} \, \mathbb{C}_{j}^{(k)}(-1) \bigg),
\end{equation}
where $\mathbb{C}_j^{(k)}(x)$ is the polynomial defined in \eqref{def1}.
\end{theorem}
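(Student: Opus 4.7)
The plan is to parallel the derivation of Equations \eqref{inv2} and \eqref{inv3} from Theorem \ref{th:7}: first establish a poly-Cauchy analogue of \eqref{th31}, then invert it using the orthogonality of generalized Stirling numbers via \eqref{inv}. Starting from \eqref{th3p1} with $n$ replaced by $n-1$ and the argument of $S_{n-1}$ replaced by $t_1 \cdots t_k - 1$ (so that $\binom{x+1-a}{m+1}$ becomes $\binom{t_1\cdots t_k - a}{m+1}$), integrating over the unit cube $[0,1]^k$ and invoking \eqref{defpol1} yields
\begin{equation*}
\int_{[0,1]^k}\! S_{n-1}(t_1 \cdots t_k - 1)\, dt_1 \cdots dt_k = S_{n-1}(a-1) + \sum_{m=1}^n \frac{c_m^{(k)}(a)}{m}\genfrac{\{}{\}}{0pt}{}{n-1}{m-1}_a,
\end{equation*}
which is the sought poly-Cauchy extension of \eqref{th31}, pending evaluation of the left-hand side.

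The main technical obstacle is rewriting this multiple integral in terms of $\mathbb{C}_j^{(k)}(1)$. Using $S_{n-1}(y-1) = \tfrac{1}{n}(B_n(y)-B_n(1))$ together with the explicit expansion $B_n(y) = \sum_{j=0}^n \binom{n}{j} B_{n-j}\, y^j$, term-by-term integration gives $\sum_{j=0}^n \binom{n}{j} B_{n-j}/(j+1)^k$. Binomial inversion of the definition $\mathbb{C}_j^{(k)}(1) = \sum_{i=0}^j \binom{j}{i}(-1)^i/(i+1)^k$ provides the key identity $1/(j+1)^k = \sum_{i=0}^j(-1)^i\binom{j}{i}\mathbb{C}_i^{(k)}(1)$; substituting this and interchanging the order of summation collapses the inner sum via $\binom{n}{j}\binom{j}{i} = \binom{n}{i}\binom{n-i}{j-i}$ to $\binom{n}{i} B_{n-i}(1)$. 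Invoking $B_m(1) = (-1)^m B_m$ and absorbing the $B_n(1)/n$ term as the $j=0$ contribution then gives the precursor identity
\begin{equation*}
\sum_{m=1}^n \frac{c_m^{(k)}(x)}{m}\genfrac{\{}{\}}{0pt}{}{n-1}{m-1}_x = \frac{(-1)^n}{n}\sum_{j=0}^n \binom{n}{j} B_{n-j}\,\mathbb{C}_j^{(k)}(1) - \frac{B_n(x)}{n}.
\end{equation*}

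Applying the inversion formula \eqref{inv} with $g_l = c_{l+1}^{(k)}(x)/(l+1)$ and $f_l$ equal to the right-hand side above (with $n$ replaced by $l+1$) immediately produces \eqref{poly7}. For \eqref{poly8}, the argument is formally identical except that one substitutes $x \to -t_1\cdots t_k - 1$ in \eqref{th3p1}, so the integral on the right produces $\widehat{c}_m^{(k)}(-x)/m$ via \eqref{defpol2}. Expanding $B_n(-t_1\cdots t_k)$ introduces an extra factor $(-1)^j$, and the corresponding binomial-transform identity becomes $(-1)^j/(j+1)^k = \sum_{i=0}^j \binom{j}{i}\mathbb{C}_i^{(k)}(-1)$, which follows from the direct computation $\mathbb{C}_j^{(k)}(-1) = (-1)^j\sum_{i=0}^j\binom{j}{i}/(i+1)^k$ by binomial inversion. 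The same bookkeeping yields the analogue of the precursor identity, and a final application of \eqref{inv} delivers \eqref{poly8}. Tracking the signs through these binomial inversions is the step most prone to slippage and the place where care is needed; everything else is routine manipulation.
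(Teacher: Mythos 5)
Your proof is correct, and it follows precisely the route the paper intends: the paper omits the proof of Theorem~\ref{th:10}, saying only that it is the generalization of Equations~\eqref{inv2} and \eqref{inv3} to poly-Cauchy polynomials --- that is, it arises by establishing a poly-Cauchy analogue of Theorem~\ref{th:7} (via \eqref{th3p1} with $x \to \pm t_1\cdots t_k - 1$, integrated over the unit $k$-cube) and then applying the inversion formula \eqref{inv}, which is exactly what you do. Your precursor identity, the binomial-inversion identities $1/(j+1)^k = \sum_{i=0}^{j}(-1)^i\binom{j}{i}\mathbb{C}_i^{(k)}(1)$ and $(-1)^j/(j+1)^k = \sum_{i=0}^{j}\binom{j}{i}\mathbb{C}_i^{(k)}(-1)$, and the final inversion step all check out, signs included.
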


We note that employing Formulas \eqref{th41} and \eqref{th42} for Bernoulli polynomials in \eqref{poly7} and \eqref{poly8} leads, respectively, to~the alternative formulas ($n \geq 1$)
\begin{equation*}
c_{n}^{(k)}(x) = c_n -n \sum_{m=1}^n \frac{(-1)^{n}}{m} \genfrac{[}{]}{0pt}{}{n-1}{m-1}_x
\bigg((-x)^m - \sum_{j=0}^m \binom{m}{j} B_{m-j} \, \mathbb{C}_{j}^{(k)}(1) \bigg),
\end{equation*}
and
\begin{equation*}
\widehat{c}_{n}^{(k)}(-x) = \widehat{c}_n -n \sum_{m=1}^n \frac{(-1)^{n}}{m} \genfrac{[}{]}{0pt}{}{n-1}{m-1}_x
\bigg( (1-x)^m - \sum_{j=0}^m (-1)^j \binom{m}{j} B_{m-j} \, \mathbb{C}_{j}^{(k)}(-1) \bigg).
\end{equation*}
{For} $k=1$, the~above formulas become
\begin{equation*}
c_{n}(x) = c_n -(-1)^n n \sum_{m=1}^n \frac{(-1)^m}{m} \genfrac{[}{]}{0pt}{}{n-1}{m-1}_x  x^m,
\end{equation*}
and
\begin{equation*}
\widehat{c}_{n}(-x) = \widehat{c}_n - (-1)^n n \sum_{m=1}^n \frac{(-1)^m}{m} \genfrac{[}{]}{0pt}{}{n-1}{m-1}_{x}
\! \big( (x-1)^m - 1 \big),
\end{equation*}
which may be compared with the formulas in \eqref{exp1} and \eqref{exp2}.

Finally, the extension to poly-Cauchy polynomials of Theorem \ref{th:4} reads as follows.
\begin{theorem}\label{th:11}
For integers $n \geq 1$ and for arbitrary $x$, we have
\begin{align*}
c^{(k)}_{2n}(x) & = n \sum_{m=1}^n \frac{u(n,m)}{m} \sum_{j=0}^{2m-1} \binom{2m}{j} B_{j}
\, \mathbb{C}_{2m-j}^{(k)}(x+n), \\
\widehat{c}^{(k)}_{2n}(x) & = n \sum_{m=1}^n \frac{u(n,m)}{m} \sum_{j=0}^{2m-1} (-1)^j \binom{2m}{j} B_{j}
\, \mathbb{C}_{2m-j}^{(k)}(x-n), \\
c^{(k)}_{2n+1}(x) & = -(2n+1) \sum_{m=1}^n \frac{u(n,m)}{2m+1}
\sum_{j=0}^{2m} 2^{j} \binom{2m+1}{j} B_{j}\, \mathbb{C}_{2m+1-j}^{(k)}(x+n+1), \\
\widehat{c}^{(k)}_{2n+1}(x) & = (2n+1) \sum_{m=1}^n \frac{u(n,m)}{2m+1} \sum_{j=0}^{2m} 2^{j} \binom{2m+1}{j}
B_{j}\, \mathbb{C}_{2m+1-j}^{(k)}(x-n+1),
\end{align*}
where $u(n,m)$ are the central factorial numbers with even indices of the first kind and $\mathbb{C}_j^{(k)}(x)$ is the polynomial defined in \eqref{def1}.
\end{theorem}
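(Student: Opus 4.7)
The plan is to carry out, for each of the four identities, the same calculation that proves Theorem~\ref{th:4}, with the single integration variable $t$ replaced by the product $z := t_1\cdots t_k$ and the single integration $\int_0^1 dt$ replaced by the $k$-fold integration $\int_0^1\!\!\cdots\!\!\int_0^1 dt_1\cdots dt_k$, exactly as in the passage from Proposition~\ref{prop:1} to Proposition~\ref{prop:6} or from Theorem~\ref{th:1} to Theorem~\ref{th:9}. After this substitution, the left-hand side of each resulting equation is identified, via \eqref{defpol1} or \eqref{defpol2}, with $c_{2n}^{(k)}(s)$, $\widehat{c}_{2n}^{(k)}(s)$, $c_{2n+1}^{(k)}(s)$, or $\widehat{c}_{2n+1}^{(k)}(s)$.

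For the third identity, I would start from the central-factorial identity
\[
(2n)!\binom{x+n+1}{2n+1} = \sum_{m=1}^n u(n,m)\,2^{2m+1}\,S_{2m}\!\left(\tfrac{x}{2}\right)
\]
already used in the proof of Theorem~\ref{th:4}, put $x \to z - s - n - 1$, and integrate over $[0,1]^k$. On the right, I would expand $S_{2m}$ by the Faulhaber formula
\[
S_{2m}(y) = \frac{1}{2m+1}\sum_{j=0}^{2m}(-1)^j\binom{2m+1}{j}B_j\,y^{2m+1-j}
\]
with $y = (z - s - n - 1)/2$, binomially expand $y^{2m+1-j}$ in powers of $z$, and apply the key product-integration identity
\[
\int_0^1\!\!\cdots\!\!\int_0^1 (z - a)^p\, dt_1\cdots dt_k = (-1)^p\,\mathbb{C}_p^{(k)}(a),
\]
which follows immediately from $\int_0^1\!\!\cdots\!\!\int_0^1 z^i\,dt_1\cdots dt_k = 1/(i+1)^k$ together with the definition \eqref{def1}. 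Collecting the factors $2^{2m+1}$, $2^{-(2m+1-j)}$, and $(-1)^{2m+1-j}$ produces precisely the $2^j$ and the overall minus sign appearing in the third formula.

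The first, second, and fourth identities proceed identically: each starts from the appropriate central-factorial identity (producing $S_{2m-1}$ for the even-indexed cases, where no half-angle occurs and hence no factor $2^j$ is introduced, or $S_{2m}$ for the fourth, now using \eqref{defpol2} in place of \eqref{defpol1}), and substitutes $\pm z - s - c$ with the same shift $c$ employed in the proof of Theorem~\ref{th:4}, so that the $\mathbb{C}_{\cdot}^{(k)}$ that arises is evaluated at the prescribed arguments $x + n$, $x - n$, or $x - n + 1$. The sign $(-1)^j$ appearing in the second and fourth identities reflects the sign reversal inherent in the definition \eqref{defpol2} of $\widehat{c}_n^{(k)}$.

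The main obstacle is purely combinatorial bookkeeping: keeping careful track of the parities $(-1)^{2m\pm j}$, the powers of $2$ introduced by the half-angle $y=(\,\cdot\,)/2$ in the odd-indexed cases, and the shifts of the argument of $\mathbb{C}_j^{(k)}$. Once the product-integration identity above is in hand, each of the four formulas follows mechanically by matching coefficients.
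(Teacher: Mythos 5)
Your overall strategy---replace $t$ by $z=t_1\cdots t_k$, integrate over $[0,1]^k$, expand the power sums by Faulhaber's formula, and convert the resulting moments via the identity $\int_0^1\!\cdots\!\int_0^1 (z-a)^p\, dt_1\cdots dt_k=(-1)^p\,\mathbb{C}_p^{(k)}(a)$---is exactly the intended route: the paper states Theorem~\ref{th:11} without proof, as ``the extension to poly-Cauchy polynomials of Theorem~\ref{th:4}'', and your plan is the natural transplant of the proofs of Theorems~\ref{th:4} and~\ref{th:9}. Your key integration identity is correct, and for the first three formulas your bookkeeping does land exactly on the stated expressions: for $c^{(k)}_{2n+1}$ the Faulhaber sign $(-1)^j$ cancels against the sign $(-1)^{2m+1-j}$ coming from the moment identity, leaving the overall minus sign and the factor $2^j$; for the even-indexed cases the identity $(2n-1)!\binom{x+n}{2n}=\sum_{m=1}^n u(n,m)S_{2m-1}(x)$ plays the role of the odd-indexed one, and the surviving $(-1)^j$ in the $\widehat{c}^{(k)}_{2n}$ case comes out as stated, with argument $x-n$.

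The gap is in the fourth identity. There the substitution forces $w=s-z-n-1$ in $(2n)!\binom{w+n+1}{2n+1}=\sum_m u(n,m)2^{2m+1}S_{2m}(w/2)$, so the argument of $S_{2m}$ is $\tfrac{(s-n-1)-z}{2}$, i.e.\ \emph{anti-aligned} with $z$. Direct expansion then produces
\begin{equation*}
\widehat{c}^{(k)}_{2n+1}(x)=(2n+1)\sum_{m=1}^n\frac{u(n,m)}{2m+1}\sum_{j=0}^{2m}(-1)^j 2^j\binom{2m+1}{j}B_j\,\mathbb{C}^{(k)}_{2m+1-j}(x-n-1),
\end{equation*}
with a surviving factor $(-1)^j$ and argument $x-n-1$---not the stated $B_j$ and $x-n+1$. (Your assertion that the sign $(-1)^j$ ``appears in the second and fourth identities'' is wrong for the fourth: the theorem's fourth formula contains no $(-1)^j$.) The two forms are in fact equal, but seeing this---or, better, deriving the stated form directly---requires one ingredient your sketch omits: the reflection $S_{2m}(-y-1)=-S_{2m}(y)$ for $m\ge 1$ (a consequence of $B_{2m+1}(1-x)=-B_{2m+1}(x)$ and $B_{2m+1}=0$), which is the power-sum avatar of the Euler reflection $E_{2m+1}(1-x)=-E_{2m+1}(x)$ used in the paper's proof of Theorem~\ref{th:4}. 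Applying it first, $S_{2m}\bigl(\tfrac{s-z-n-1}{2}\bigr)=-S_{2m}\bigl(\tfrac{z-(s-n+1)}{2}\bigr)$, realigns the argument with $z$; the computation then runs exactly as in your third case and yields the stated formula with argument $x-n+1$ and no $(-1)^j$, consistent with the paper's subsequent compact rewriting via $E^{(k)}_{2m+1}(x-n)$. Without this step, the ``mechanical matching of coefficients'' you invoke fails for the fourth identity.
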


We present several comments with respect to Theorem \ref{th:11}.
\begin{itemize}
\item
By \eqref{idc1} and \eqref{idc2}, the~formulas for $c^{(k)}_{2n}(x)$ and $\widehat{c}^{(k)}_{2n}(x)$ in Theorem \ref{th:11} reduce to the formulas for $c_{2n}(x)$ and $\widehat{c}_{2n}(x)$ in Theorem \ref{th:4} when $k=1$.

\item
Comparing the formula for $c^{(k)}_{2n+1}(x)$ in Theorem \ref{th:11} with that for $c_{2n+1}(x)$ in Theorem~\ref{th:4} reveals that
\begin{equation*}
E_{2m+1}(x) = \sum_{j=0}^{2m} 2^{j} \binom{2m+1}{j} B_{j}\,
\mathbb{C}_{2m+1-j}^{(1)}(x+1), \quad m \geq 1.
\end{equation*}
In particular, for~$x=0$, this produces
\begin{equation*}
\sum_{j=0}^{2m} 2^{j} \binom{2m+1}{j} B_{j}\, \mathbb{C}_{2m+1-j}^{(1)}(1)
= \frac{1- 2^{2m+2}}{m+1} B_{2m+2}, \quad m \geq 1,
\end{equation*}
where we have used the well-known formula for $E_m(0)$ (see, e.g.,~\cite{gould} (Equation~(15.47))).

\item
Regarding even-indexed Euler polynomials $E_{2m}(x)$, we checked the numerical correctness of the formula
\begin{equation*}
E_{2m}(x) = \sum_{j=0}^{2m-1} 2^{j} \binom{2m}{j} B_{j}\,
\Big( \mathbb{C}_{2m-j}^{(1)}(x+1) -  \mathbb{C}_{2m-j}^{(1)}(1) \Big), \quad m \geq 1,
\end{equation*}
fulfilling that $E_{2m}(0) =0$ for all $m \geq 1$.

\item
By introducing the generalized Euler polynomials
\begin{equation*}
E^{(k)}_{2m+1}(x) = \sum_{j=0}^{2m} 2^{j} \binom{2m+1}{j} B_{j}\,
\mathbb{C}_{2m+1-j}^{(k)}(x+1),
\end{equation*}
the formulas for $c^{(k)}_{2n+1}(x)$ and $\widehat{c}^{(k)}_{2n+1}(x)$ in Theorem \ref{th:11} can be written more compactly~as
\begin{equation*}
c^{(k)}_{2n+1}(x) = -(2n+1) \sum_{m=1}^n \frac{u(n,m)}{2m+1} E_{2m+1}^{(k)}(x+n),
\end{equation*}
and
\begin{equation*}
\widehat{c}^{(k)}_{2n+1}(x) = (2n+1) \sum_{m=1}^n \frac{u(n,m)}{2m+1} E_{2m+1}^{(k)}(x-n).
\end{equation*}
\end{itemize}

\section{Multiparameter Poly-Cauchy~Polynomials}\label{sec:7}

As mentioned in the Introduction, numerous generalizations of Cauchy numbers and polynomials have been proposed in the literature, and many of their properties have been studied in various contexts. For our purpose here, among the proposed generalizations, we highlight Cauchy numbers with a $q$ parameter~\cite{komatsu2} and shifted poly-Cauchy numbers~\cite{komatsu5}. Komatsu {\it~et~al.\/} \cite{komatsu3} went further by combining the last two types of generalizations. Specifically, for~integers $n \geq 0$, $k \geq 1$, a~positive real number $a$, and~non-zero real numbers $q$ and $l_1,\ldots, l_k$, the~authors of~\cite{komatsu3} defined poly-Cauchy numbers $c_{n,a,q,L}^{(k)}$ and $\widehat{c}_{n,a,q,L}^{(k)}$ by
\begin{equation*}
c_{n,a,q,L}^{(k)} = \int_{0}^{l_1} \ldots \int_{0}^{l_k}
(t_1 \cdots t_k)^a \prod_{j=1}^{n-1} (t_1 \cdots t_k - j q) dt_1 \cdots dt_k,
\end{equation*}
and
\begin{equation*}
\widehat{c}_{n,a,q,L}^{(k)} = (-1)^{a-1} \int_{0}^{l_1} \ldots \int_{0}^{l_k}
(-t_1 \cdots t_k)^a \prod_{j=1}^{n-1} (-t_1 \cdots t_k - j q) dt_1 \cdots dt_k,
\end{equation*}
where $L$ stands for the $k$-tuple $(l_1, \ldots, l_k)$. From~\cite{komatsu3} (Theorems 1 and 8), $c_{n,a,q,L}^{(k)}$ and $\widehat{c}_{n,a,q,L}^{(k)}$ can be expressed in terms of (unsigned) Stirling numbers of the first kind as follows:
\begin{equation}\label{shif1}
c_{n,a,q,L}^{(k)} = \sum_{m=0}^n (-q)^{n-m} \, \frac{(l_1 \cdots l_k)^{m+a}}{(m+a)^k}
\genfrac{[}{]}{0pt}{}{n}{m},
\end{equation}
and
\begin{equation}\label{shif2}
\widehat{c}_{n,a,q,L}^{(k)} = (-1)^n \sum_{m=0}^n q^{n-m} \, \frac{(l_1 \cdots l_k)^{m+a}}{(m+a)^k}
\genfrac{[}{]}{0pt}{}{n}{m}.
\end{equation}
{As} noted in~\cite{komatsu3}, if~$a = l_1 = \ldots = l_k =1$, then $c_{n,1,q,(1,\ldots,1)}^{(k)} = c_{n,q}^{(k)}$ and $\widehat{c}_{n,1,q,(1,\ldots,1)}^{(k)} = \widehat{c}_{n,q}^{(k)}$ are the poly-Cauchy numbers with a $q$ parameter, as~defined in~\cite{komatsu2}. In~contrast, if~$q = l_1 = \ldots = l_k =1$, then $c_{n,a,1,(1,\ldots,1)}^{(k)} = c_{n,a}^{(k)}$ and $\widehat{c}_{n,a,1,(1,\ldots,1)}^{(k)} = \widehat{c}_{n,a}^{(k)}$ are shifted poly-Cauchy numbers, as~defined in~\cite{komatsu5}.

In line with the above definition of $c_{n,a,q,L}^{(k)}$ and $\widehat{c}_{n,a,q,L}^{(k)}$, we now introduce a kind of multiparameter poly-Cauchy polynomials, extending the original concept (see~\cite{gomaa,young,guettal} for a general overview). In~concrete terms, we consider the following multiparameter poly-Cauchy polynomials of the first and second kind, $c^{(k)}_{n,a,q,L,y}(x)$ and $\widehat{c}^{(k)}_{n,a,q,L,y}(x)$, defined~by
\begin{equation*}
c^{(k)}_{n,a,q,L,y}(x) = \int_0^{l_1} \ldots \int_0^{l_k} (t_1 \cdots t_k -x)^{a-1}
\prod_{j=0}^{n-1}(t_1 \cdots t_k -x -y -j q) dt_1 \cdots dt_k,
\end{equation*}
and
\begin{equation*}
\widehat{c}^{(k)}_{n,a,q,L,y}(x) = (-1)^{a-1} \int_0^{l_1} \ldots \int_0^{l_k} (x - t_1 \cdots t_k)^{a-1}
\prod_{j=0}^{n-1}(x +y - t_1 \cdots t_k -j q) dt_1 \cdots dt_k,
\end{equation*}
which, in~addition to $n$, $k$, $a$, $q$, and~$l_1,\ldots, l_k$, involve the real parameters $x$ and $y$.

As the following theorem shows, $c^{(k)}_{n,a,q,L,y}(x)$ and $\widehat{c}^{(k)}_{n,a,q,L,y}(x)$ can be determined for positive integers $a$ using (bivariate) Stirling polynomials of the first kind
\begin{equation}\label{bivar}
\genfrac{[}{]}{0pt}{}{n}{m}_{(y,q)} = \sum_{i=0}^{n-m} \binom{i+m}{m}\genfrac{[}{]}{0pt}{}{n}{i+m}
y^i q^{n-m-i}, \quad 0 \leq m \leq n,
\end{equation}
and augmented polynomials
\begin{equation}\label{def2}
\mathbb{C}_j^{(k)}(x;L) = \begin{cases}
  l_1 \cdots l_k, & \text{for $j=0$;} \\[1mm]
  \displaystyle{\sum_{i=0}^{j} \frac{(-1)^i}{(i+1)^k} (l_1 \cdots l_k )^{i+1} \binom{j}{i} x^{j-i}},
& \text{for $j \geq 1$.}
\end{cases}
\end{equation}

\begin{theorem}\label{th:12}
For integers $n \geq 0$, $k \geq 1$, $a \geq 1$, and~arbitrary $q$, $l_1, \ldots, l_k$, $x$, and~$y$ (with $l_1, \ldots, l_k \neq 0$), we have
\begin{equation}\label{para1}
c^{(k)}_{n,a,q,L,y}(x) = (-1)^{a-1} \sum_{m=0}^n (-1)^n \genfrac{[}{]}{0pt}{}{n}{m}_{(y,q)}
\mathbb{C}_{m+a-1}^{(k)}(x;L),
\end{equation}
and
\begin{equation}\label{para2}
\widehat{c}^{(k)}_{n,a,q,L,y}(x) = (-1)^{a-1} \sum_{m=0}^n (-1)^{n-m} \genfrac{[}{]}{0pt}{}{n}{m}_{(-y,q)}
\mathbb{C}_{m+a-1}^{(k)}(x;L),
\end{equation}
where $\genfrac{[}{]}{0pt}{}{n}{m}_{(y,q)}$ and $\mathbb{C}_m^{(k)}(x;L)$ are the polynomials defined in \eqref{bivar} and \eqref{def2}, respectively.
\end{theorem}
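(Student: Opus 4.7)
The plan is to reduce both identities to two independent computations: a polynomial expansion of $\prod_{j=0}^{n-1}(t+y+jq)$ in powers of $t$, and an evaluation of the resulting multiple moment integrals. For the first kind, I would substitute $u = t_1\cdots t_k - x$, which turns the integrand into $u^{a-1}\prod_{j=0}^{n-1}(u-y-jq)$; for the second kind, I would set $v = x - t_1\cdots t_k$, which turns the integrand into $(-1)^{a-1}v^{a-1}\prod_{j=0}^{n-1}(v+y-jq)$. In both cases the product depends only on $u$ (respectively $v$), so once it is expanded as a polynomial in that single variable, the $k$-fold integration decouples and only pure powers of $t_1\cdots t_k$ need to be integrated.

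The algebraic core is the bivariate-Stirling generating identity
\begin{equation*}
\prod_{j=0}^{n-1}(t+y+jq) = \sum_{m=0}^{n}\genfrac{[}{]}{0pt}{}{n}{m}_{(y,q)}\,t^{m},
\end{equation*}
which I would derive from the univariate version $\sum_{m=0}^{n}\genfrac{[}{]}{0pt}{}{n}{m}_{x}t^{m} = \prod_{j=0}^{n-1}(t+x+j)$ (implicit in~\eqref{th1p1} after the shift $x\mapsto 1-x$) by writing $\prod_{j=0}^{n-1}(t+y+jq) = q^{n}\prod_{j=0}^{n-1}(t/q+y/q+j)$ and matching coefficients: by~\eqref{bivar}, $q^{n-m}\genfrac{[}{]}{0pt}{}{n}{m}_{y/q} = \genfrac{[}{]}{0pt}{}{n}{m}_{(y,q)}$. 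Two sign substitutions then yield the needed versions: replacing $t\mapsto -u$ gives $\prod_{j=0}^{n-1}(u-y-jq) = \sum_{m}(-1)^{n-m}\genfrac{[}{]}{0pt}{}{n}{m}_{(y,q)}u^{m}$, and rewriting $v+y-jq = -((-v)+(-y)+jq)$ gives $\prod_{j=0}^{n-1}(v+y-jq) = \sum_{m}(-1)^{n-m}\genfrac{[}{]}{0pt}{}{n}{m}_{(-y,q)}v^{m}$. The latter maneuver explains the asymmetric appearance of $-y$ (rather than $y$) in~\eqref{para2}.

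The remaining moment integrals are treated by expanding $(x-t_1\cdots t_k)^{j}$ with the binomial theorem and factoring the $k$-fold integration into $\int_{0}^{l_r}t_{r}^{i}\,dt_r = l_{r}^{i+1}/(i+1)$, which reproduces~\eqref{def2} exactly:
\begin{equation*}
\int_{0}^{l_1}\!\cdots\!\int_{0}^{l_k}(x - t_1\cdots t_k)^{j}\,dt_1\cdots dt_k = \mathbb{C}_{j}^{(k)}(x;L),
\end{equation*}
and its first-kind counterpart $\int\!\cdots\!\int(t_1\cdots t_k - x)^{j}\,dt_1\cdots dt_k$ differs only by a factor $(-1)^{j}$. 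Setting $j = m+a-1$ and combining with the expansions above leaves just the sign bookkeeping: $(-1)^{n-m}\cdot(-1)^{m+a-1} = (-1)^{n+a-1}$ for the first kind, and an overall $(-1)^{a-1}\cdot(-1)^{n-m}$ for the second kind, which match the right-hand sides of~\eqref{para1} and~\eqref{para2}. The principal obstacle is precisely this sign tracking, together with the identification $\genfrac{[}{]}{0pt}{}{n}{m}_{(-y,q)}$ (rather than $\genfrac{[}{]}{0pt}{}{n}{m}_{(y,-q)}$) in the second-kind case; I would sanity-check the result in the degenerate case $n=0$, where both sides collapse to $(-1)^{a-1}\mathbb{C}_{a-1}^{(k)}(x;L)$, and in the limit $a=q=1$, $l_1=\cdots=l_k=1$, $y=0$, which must recover the ordinary poly-Cauchy expansions~\eqref{back1} and~\eqref{back2}.
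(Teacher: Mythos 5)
Your proposal is correct and follows essentially the same route as the paper: both proofs expand the shifted product via the bivariate Stirling generating identity $\prod_{j=0}^{n-1}\bigl(t-(y+jq)\bigr)=\sum_{m=0}^{n}(-1)^{n-m}\genfrac{[}{]}{0pt}{}{n}{m}_{(y,q)}t^{m}$ (applied with $t = t_1\cdots t_k - x$, and with $y\mapsto -y$ for the second kind), then expand binomially in $x$ and $t_1\cdots t_k$ and integrate term by term, so that the factored moments $\int_0^{l_r}t_r^{i}\,dt_r$ assemble into $\mathbb{C}_{m+a-1}^{(k)}(x;L)$. The only cosmetic differences are that you derive the generating identity from \eqref{th1p1} by rescaling with $q$ (where the paper simply cites Mitrinovi\'{c}) and that you package the integration step as a standalone moment lemma rather than carrying it out inline.
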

\begin{proof}
Using the generating function for $\genfrac{[}{]}{0pt}{}{n}{m}_{(y,q)}$ (cf.\ \cite{mitri}, Equation~(1.1))
\begin{equation*}
\prod_{j=0}^{n-1} (x - (y+j q)) = \sum_{m=0}^n (-1)^{n-m} \genfrac{[}{]}{0pt}{}{n}{m}_{(y,q)} x^m,
\end{equation*}
it follows that
\begin{align*}
(t_1 \cdots t_k -x)^{a-1} & \prod_{j=0}^{n-1} (t_1 \cdots t_k -x - (y+j q)) \\
& = \sum_{m=0}^n (-1)^{n-m} \genfrac{[}{]}{0pt}{}{n}{m}_{(y,q)} (t_1 \cdots t_k - x)^{m+a-1} \\
& = (-1)^{a-1} \sum_{m=0}^n (-1)^{n} \genfrac{[}{]}{0pt}{}{n}{m}_{(y,q)} \sum_{i=0}^{m+a-1}
(-1)^i \binom{m+a-1}{i} x^{m+a-1-i} (t_1 \cdots t_k)^i.
\end{align*}
Thus,
\begin{align*}
c^{(k)}_{n,a,q,L,y}(x) & = (-1)^{a-1} \sum_{m=0}^n (-1)^{n} \genfrac{[}{]}{0pt}{}{n}{m}_{(y,q)} \\
& \qquad \times \sum_{i=0}^{m+a-1} (-1)^i \binom{m+a-1}{i} x^{m+a-1-i}
\int_0^{l_1} \ldots \int_0^{l_k} t_1^i \cdots t_k^i dt_1 \cdots dt_k,
\end{align*}
which, after~performing the multiple integral, can be expressed in the form of Equation~\eqref{para1}. Identity \eqref{para2} is proven similarly.
\end{proof}

We present a few comments are in relation to $c^{(k)}_{n,a,q,L,y}(x)$ and $\widehat{c}^{(k)}_{n,a,q,L,y}(x)$.
\begin{itemize}
\item
For fixed $q$, $L$, and~$y$, $c^{(k)}_{n,a,q,L,y}(x)$ and $\widehat{c}^{(k)}_{n,a,q,L,y}(x)$ are polynomials in $x$ of degree $n+a-1$.

\item
For $a = q = l_1 = \ldots = l_k =1$ and $y=0$, $c^{(k)}_{n,a,q,L,y}(x)$ and $\widehat{c}^{(k)}_{n,a,q,L,y}(x)$ reduce to the ordinary poly-Cauchy polynomials
\begin{equation*}
c^{(k)}_{n}(x) = \sum_{m=0}^n (-1)^{n} \genfrac{[}{]}{0pt}{}{n}{m} \mathbb{C}_{m}^{(k)}(x),
\end{equation*}
and
\begin{equation*}
\widehat{c}^{(k)}_{n}(x) = \sum_{m=0}^n (-1)^{n-m} \genfrac{[}{]}{0pt}{}{n}{m} \mathbb{C}_{m}^{(k)}(x),
\end{equation*}
which correspond to the formulas in \eqref{back1} and \eqref{back2}, respectively.

\item
For $x=0$, $c^{(k)}_{n,a,q,L,y}(x)$ and $\widehat{c}^{(k)}_{n,a,q,L,y}(x)$ become
\begin{equation*}
c^{(k)}_{n,a,q,L,y}(0) \equiv c^{(k)}_{n,a,q,L,y} = \sum_{m=0}^n (-1)^{n-m}
\genfrac{[}{]}{0pt}{}{n}{m}_{(y,q)} \frac{(l_1 \cdots l_k)^{m+a}}{(m+a)^k},
\end{equation*}
and
\begin{equation*}
\widehat{c}^{(k)}_{n,a,q,L,y}(0) \equiv \widehat{c}^{(k)}_{n,a,q,L,y} = (-1)^n \sum_{m=0}^n
\genfrac{[}{]}{0pt}{}{n}{m}_{(-y,q)} \frac{(l_1 \cdots l_k)^{m+a}}{(m+a)^k}.
\end{equation*}
Furthermore, since $\genfrac{[}{]}{0pt}{}{n}{m}_{(0,q)} = \genfrac{[}{]}{0pt}{}{n}{m} q^{n-m}$, $c^{(k)}_{n,a,q,L,y}$ and $\widehat{c}^{(k)}_{n,a,q,L,y}$ reduce to \eqref{shif1} and \eqref{shif2} when $y=0$. On~the other hand, if~$a = q = l_1 = \ldots = l_k =1$, $c^{(k)}_{n,a,q,L,y}$ and $\widehat{c}^{(k)}_{n,a,q,L,y}$ are the same, respectively, as~\eqref{pro41} and \eqref{pro42} (after renaming $y$ to $x$).

\item
From the definition of $c^{(k)}_{n,a,q,L,y}(x)$ and $\widehat{c}^{(k)}_{n,a,q,L,y}(x)$, it can be seen that, for~$a=1$, they remain invariant under the interchange of $x$ and $y$. So, for~example,
\begin{equation*}
c^{(3)}_{4,1,-3,(1,1,\frac{1}{2}),-\frac{3}{2}}(\sqrt{5}) = c^{(3)}_{4,1,-3,(1,1,\frac{1}{2}),\sqrt{5}}(-\tfrac{3}{2})
= \frac{114177911}{144000} - \frac{284203 \,\sqrt{5}}{768},
\end{equation*}
and
\begin{equation*}
\widehat{c}^{(3)}_{4,1,-3,(1,1,\frac{1}{2}),-\frac{3}{2}}(\sqrt{5}) = \widehat{c}^{(3)}_{4,1,-3,(1,1,\frac{1}{2}),\sqrt{5}}(-\tfrac{3}{2})
= \frac{14046697}{288000} + \frac{10805 \,\sqrt{5}}{768}.
\end{equation*}
Moreover, $c^{(k)}_{n,a,q,L,y}(x)$ and $\widehat{c}^{(k)}_{n,a,q,L,y}(x)$ are related by
\begin{equation*}
c^{(k)}_{n,a,q,L,y}(x) = (-1)^n \, \widehat{c}^{(k)}_{n,a,-q,L,y}(x).
\end{equation*}

\item
Let $\{ S^1, S^2 \}$ be the Stirling pair~\cite{hsu} composed of $S^1 = \genfrac{[}{]}{0pt}{}{n}{m}_{(y,q)}$ and $S^2 = \genfrac{\{ }{\} }{0pt}{}{n}{m}_{(y,q)}$, with~\begin{equation*}
\genfrac{\{}{\}}{0pt}{}{n}{m}_{(y,q)} = \frac{1}{m! q^m} \sum_{l=0}^m (-1)^{m-l} \binom{m}{l} (y + l q)^n.
\end{equation*}
If we define multiparameter poly-Bernoulli polynomials $\mathfrak{B}^{(k)}_{n,a,q,L,y}(x)$ by
\begin{equation*}
\mathfrak{B}^{(k)}_{n,a,q,L,y}(x) = (-1)^n \sum_{m=0}^n m! \genfrac{\{}{\}}{0pt}{}{n}{m}_{(y,q)}
\mathbb{C}_{m+a-1}^{(k)}(x;L),
\end{equation*}
we have the following relationships between $\mathfrak{B}^{(k)}_{n,a,q,L,y}(x)$ and multiparameter poly-Cauchy polynomials  $c^{(k)}_{n,a,q,L,y}(x)$ and $\widehat{c}^{(k)}_{n,a,q,L,y}(x)$:
\begin{align*}
\mathfrak{B}^{(k)}_{n,a,q,L,y}(x) & = (-1)^{n+a-1} \sum_{m=0}^n \sum_{l=0}^m  (-1)^{m} m!
\genfrac{\{}{\}}{0pt}{}{n}{m}_{(y,q)}
\genfrac{\{}{\}}{0pt}{}{m}{l}_{(y,q)} c_{l,a,q,L,y}^{(k)}(x), \\
\mathfrak{B}^{(k)}_{n,a,q,L,y}(x) & = (-1)^{n+a-1} \sum_{m=0}^n \sum_{l=0}^m  m!
\genfrac{\{}{\}}{0pt}{}{n}{m}_{(y,q)}
\genfrac{\{}{\}}{0pt}{}{m}{l}_{(-y,q)} \widehat{c}_{l,a,q,L,y}^{(k)}(x), \\
c_{n,a,q,L,y}^{(k)}(x) & = (-1)^{n+a-1} \sum_{m=0}^n \sum_{l=0}^m  \frac{(-1)^{m}}{m!}
\genfrac{[}{]}{0pt}{}{n}{m}_{(y,q)}
\genfrac{[}{]}{0pt}{}{m}{l}_{(y,q)} \mathfrak{B}_{l,a,q,L,y}^{(k)}(x), \\
\widehat{c}_{n,a,q,L,y}^{(k)}(x) & = (-1)^{n+a-1} \sum_{m=0}^n \sum_{l=0}^m \frac{1}{m!} \genfrac{[}{]}{0pt}{}{n}{m}_{(-y,q)}
\genfrac{[}{]}{0pt}{}{m}{l}_{(y,q)} \mathfrak{B}_{l,a,q,L,y}^{(k)}(x),
\end{align*}
which reduce to \eqref{inform} when $a =1$, $q=1$, $l_1 = \ldots = l_k =1$, and~$y =0$.

\end{itemize}

\section{Conclusions}\label{sec:8}

In this paper, we mainly report a variety of (known and novel) formulas and identities involving Cauchy and poly-Cauchy numbers and polynomials, ordinary and generalized Stirling numbers, binomial coefficients, central factorial numbers, Euler polynomials, $r$-Whitney numbers, hyperharmonic polynomials, and~Bernoulli numbers and polynomials. Furthermore, we obtain several recurrence and higher-order derivative formulas (amongst others) for Cauchy and poly-Cauchy polynomials. Moreover, we develop an extended version of poly-Cauchy polynomials accounting, in~particular, for~shifted poly-Cauchy numbers and the polynomials with a $q$ parameter.

In conclusion, it is our intention that this paper may serve as a useful compilation of facts and results about the Cauchy numbers and polynomials. We also hope that it may stimulate further research into these fascinating objects and their connections with other types of numbers and polynomials such as, for~example, the~number sequence $\mathcal{A}_x(n,m)$~\cite{chen}, the~poly-Daehee numbers and polynomials~\cite{daehee1,daehee2,daehee3}, the~generalized $r$-Whitney numbers~\mbox{\cite{whit2,whit3}}, and~the generalized harmonic numbers~\cite{cheon,cheon2,harm2,kargin2}.

In this respect, it is worth noting that the equations in Proposition \ref{prop:2} can alternatively be expressed in the form
\begin{equation*}
\sum_{m=0}^n (-1)^{m} H_{m}(x+1) \frac{c_{n-m}(y)}{(n-m)!} = \binom{x-y}{n},
\end{equation*}
and
\begin{equation*}
\sum_{m=0}^n (-1)^{m} H_{m}(x+2) \frac{\widehat{c}_{n-m}(y)}{(n-m)!} = \binom{x+y}{n},
\end{equation*}
where $H_m(x)$ are the so-called harmonic polynomials in $x$ of degree $m$, defined by the generating function~\cite{cheon2} (Equation~(28))
\begin{equation*}
\sum_{m=0}^{\infty} H_m(x) t^m = - \frac{\ln (1-t)}{t(1-t)^{1-x}},
\end{equation*}
with $H_m(0) = H_{m+1}$ for all $m \geq 0$. Furthermore, it can be shown that, for~$n \geq 1$,
\begin{equation*}
\frac{\widehat{c}_n(x)}{n!} = \binom{x}{n} - \sum_{m=0}^{n-1} \frac{(-1)^m \, c_{n-m}}{(n-m-1)!}
\, H_{m}(x+1).
\end{equation*}
{Upon} comparing this equation with \eqref{hyp5}, we obtain the identity
\begin{equation*}
\sum_{m=0}^n \frac{(-1)^m \, c_{m}(-x)}{m!(n-m+1)(n-m+2)}
= \sum_{m=0}^n \frac{(-1)^{n-m} \, c_{n+1-m}}{(n-m)!} \, H_m(x), \quad n \geq 0.
\end{equation*}
{In particular,} for~$x=0$, we have
\begin{equation*}
\sum_{m=0}^n \frac{(-1)^{m} \, c_{n-m}}{(n-m)!(m+1)(m+2)}
= \sum_{m=0}^n \frac{(-1)^{m} \, c_{n+1-m}}{(n-m)!} \, H_{m+1}, \quad n \geq 0.
\end{equation*}

\begin{remark}
All properties and results of this paper have been tested using the computer algebra system {\it Mathematica}.
\end{remark}

\section*{Acknowledgments}

The author thanks the anonymous referees for their careful reading of the manuscript and for their useful comments and suggestions for improving the presentation of this study. He also thanks \mbox{Jos\'{e} A.\ Pocino} for providing him with some of the references listed below.

\end{document}